\newcommand{\slf}{\mathfrak{sl}}
\newcommand{\Part}{\mathcal{P}}
\newcommand{\g}{\mathfrak{g}}
\newcommand{\C}{\mathbb{C}}
\newcommand{\Z}{\mathbb{Z}}
\newcommand{\h}{\mathfrak{h}}
\newcommand{\OCat}{\mathcal{O}}
\newcommand{\WC}{\mathfrak{WC}}
\newcommand{\Sym}{\mathfrak{S}}
\newcommand{\Irr}{\operatorname{Irr}}
\newcommand{\Supp}{\operatorname{Supp}}
\newcommand{\wc}{\mathfrak{wc}}
\newcommand{\param}{\mathfrak{c}}
\newcommand{\Cat}{\mathcal{C}}
\newcommand{\Hom}{\operatorname{Hom}}
\newcommand{\End}{\operatorname{End}}
\newcommand{\Q}{\mathbb{Q}}
\newcommand{\Ext}{\operatorname{Ext}}
\newcommand{\Res}{\operatorname{Res}}
\newcommand{\Ind}{\operatorname{Ind}}
\newcommand{\Tor}{\operatorname{Tor}}
\newcommand{\B}{\mathcal{B}}
\newcommand{\HC}{\operatorname{HC}}
\newcommand{\VA}{\operatorname{V}}
\newcommand{\gr}{\operatorname{gr}}
\newcommand{\cont}{\operatorname{cont}}
\newtheorem{Thm}{Theorem}[section]
\newtheorem{Prop}[Thm]{Proposition}
\newtheorem{Cor}[Thm]{Corollary}
\newtheorem{Lem}[Thm]{Lemma}
\theoremstyle{definition}
\newtheorem{defi}[Thm]{Definition}
\newtheorem{Rem}[Thm]{Remark}
\numberwithin{equation}{section}
\numberwithin{table}{section} \oddsidemargin=0cm
\title{Supports of simple modules in cyclotomic Cherednik categories $\mathcal{O}$}
\author{Ivan Losev}
\address{Department
of Mathematics, Yale University, New Haven, CT 06511, USA}
\email{ivan.loseu@gmail.com}
\thanks{MSC 2010: 05E10, 16G99, 17B67, 20F55}
\thanks{Keywords: rational Cherednik algebras, categories $\mathcal{O}$, supports,
crystals, wall-crossing functors.}
\begin{document}
\begin{abstract}
The goal of this paper is to compute the supports of simple modules in
the categories $\mathcal{O}$ for the rational Cherednik algebras associated
to groups $G(\ell,1,n)$. For this we compute some combinatorial maps
on the set of simples: wall-crossing bijections and a certain
$\mathfrak{sl}_\infty$-crystal associated to a Heisenberg algebra
action on a Fock space.
\end{abstract}
\maketitle
\section{Introduction}
We fix  positive integers $\ell,n$ and form the wreath-product
group $W=G(\ell,1,n):=\Sym_n\ltimes (\Z/\ell\Z)^n$. This is a complex
reflection group acting on $\h:=\C^n$. To the pair $(W,\h)$ we can
assign the so called rational Cherednik algebra $H_c$ depending
on a parameter $c$ that is a collection of complex numbers, one
for each conjugacy class of complex reflections in the group $W$.
These algebras were introduced by Etingof and Ginzburg in \cite[Section 4]{EG}.

As a vector space, $H_c=S(\h^*)\otimes \C W\otimes S(\h)$, where
$S(\h^*),\C W, S(\h)$ are subalgebras in $H_c$. The adjoint actions of $W$
on $S(\h),S(\h^*)$ are the usual ones, and there is an interesting commutation
relation  between $y\in \h$ and $x\in \h^*$  depending on the parameter
$c$. We will recall a presentation of $H_c$ by generators and relations
below, Section \ref{SS_RCA}.

One has a distinguished category of $H_c$-modules, the category $\mathcal{O}_c$
introduced in \cite[Section 3]{GGOR} to be recalled in Section \ref{SS_RCA_O}.
This category consists of all modules
that are finitely generated over the subalgebra $S(\h^*)$ and where
$\h$ acts locally nilpotently. Its simple objects are parameterized by
$\Irr(W)$: to $\tau\in \Irr(W)$ we assign the unique simple quotient
$L_c(\tau)$ of the {\it Verma module} $\Delta_c(\tau)=H_c\otimes_{S(\h)\# W}\tau$,
where $\h$ acts on $\tau$ by $0$.

To each module $M\in \mathcal{O}_c$ we can assign its {\it support},
the closed subvariety of $\h$ defined by the annihilator of $M$
in $S(\h^*)=\C[\h]$. The main purpose of this paper is to compute
$\Supp(L_c(\tau))$ combinatorially starting from $\tau$ and $c$.
In particular, this will yield a classification of the finite
dimensional irreducible $H_c$-modules, as those are precisely
the modules whose supports are equal to $\{0\}$.

\subsection{Known results}\label{SS_known_results}
First of all, all possible supports of simples are known, this is
implicit in \cite[Section 3.8]{BE} and explicit in \cite[Section 3.10]{SV}.
Namely, let $\kappa$ be the component of the parameter $c$ corresponding
to the conjugacy class of complex reflections in $W$ intersecting
$\Sym_n$. The case $\kappa=0$ is easy and, in what follows,  we mostly consider $\kappa\neq 0$.
Let $e$ denote the denominator of $\kappa$ presented as an irreducible
fraction if $\kappa$ is rational, we take $e=+\infty$
if $\kappa$ is irrational. 
By \cite{BE}, the support of any simple equals $W \Gamma_{p,q}$, where $p,q$
are non-negative integers satisfying $p+ eq\leqslant n$ (in particular,
if $e>n$, then $q=0$) and $\Gamma_{p,q}$ is the subspace of
$\h$ given by
$$\Gamma_{p,q}=\{(x_1,\ldots,x_p, y_1,\ldots,y_1,\ldots,y_q,\ldots,y_q,0,\ldots,0)\},$$
where we have $q$ groups of $e$ equal elements.
The numbers $p,q$ are determined from $\Gamma_{p,q}$ uniquely whenever $e>1$.
When $e=1$, we always take $p=0$. We write $p(\lambda),q(\lambda)$ (or $p_c(\lambda),
q_c(\lambda)$ if we want to indicate the dependence on the parameter $c$) for the numbers
$p,q$ such that $\Supp(L_c(\lambda))=W\Gamma_{p,q}$.

A combinatorial recipe to compute $p(\lambda)$ was given in \cite{cryst}.
Recall that the irreducible representations of $G(\ell,1,n)$ are
parameterized by the $\ell$-multipartitions $\lambda$ of $n$, denote this set
by $\Part_\ell(n)$. We can encode the remaining $\ell-1$ components
of $c$ as an $\ell$-tuple of complex numbers $s_1,\ldots,s_\ell$
defined up to a common summand. Namely,
there are linear functions $h_i, i=1,\ldots,\ell,$ of the parameter $c$ defined
up to a common summand. We define $s_i$ from $h_i=\kappa s_i-i/\ell$,
see Section \ref{SS_cyclot_O}.

There is a $\hat{\slf}_e^k$-crystal structure
on $\Part_\ell:=\bigsqcup_n \Part_\ell(n)$, where $k$ and the crystal structure
itself depend on $s_1,\ldots,s_\ell$, this will be recalled below in
\ref{SSS_crystal} (when $e=+\infty$, by $\hat{\slf}_\infty$ we mean
$\slf_\infty$). A combinatorial construction of this crystal
was given by Uglov in \cite[Section 2.2]{Uglov}.
According to \cite[Section 5.5]{cryst}, $p(\lambda)$ is the depth of $\lambda$ in the crystal, i.e.,
the minimal number $d$ such that any composition of $d+1$
annihilation crystal operators kills $\lambda$.

Let us explain now what is known about $q(\lambda)$. First, one can reduce the computation
of $q(\lambda)$ to the case when one has $k=1$, see Section \ref{SS_equi} below.

The case $\ell=1$ was done in \cite{Wilcox}. Here we always have
$p+eq=n$. We can divide $\lambda$ by $e$ with remainder:
$\lambda=e\lambda'+\lambda''$, where $\lambda',\lambda''$ are partitions
such that $|\lambda'|$ is maximal possible, the operations are done part-wise. Then we have $q(\lambda)=|\lambda'|$,
see \cite[Theorem 1.6]{Wilcox}.

The number of simples $L(\lambda)$ with given support was computed in
\cite{SV} (under several restrictions on the parameters that
can be removed, as will be explained in \ref{SSS_Heisenberg}).

\subsection{Main results of this paper}\label{SS_main_res}
Based on a construction from \cite{SV},  we will see that, when $k=1$, there is a
level $1$ crystal structure for the algebra
$\slf_\infty$ on $\Part_\ell$ such that each creation operator
adds $e$ boxes to $\lambda$ and $q(\lambda)$ is the depth of $\lambda$
in this crystal, Lemma \ref{Lem:supp_Heis_cryst}.  This crystal is a discrete
analog of the Heisenberg algebra action on a higher level Fock space. The present paper
gives the first construction of this crystal as well as a combinatorial recipe
to compute it. Once the crystal is   determined,
a combinatorial formula for $q(\lambda)$ follows.

One easy thing to observe is that the $\hat{\slf}_e$ and $\slf_\infty$-crystal
commute, see Section \ref{SS_Heis_cryst}.  So it is enough to compute the latter on the singular
(=depth 0) elements for the former.
The computation of the level 1 $\slf_\infty$-crystal consists of
two parts. We first compute it explicitly in an {\it asymptotic chamber}
and then use explicit combinatorial {\it wall-crossing bijections}
to transfer it to the other chambers. 

Let us explain what we mean by chambers. Pick two parameters $c=(\kappa,s_1,\ldots,s_\ell)$
and $c'=(\kappa',s_1',\ldots,s_\ell')$. We say that $c$ and $c'$ have {\it integral difference}
if $\kappa'-\kappa, (h_i-h_j)-(h'_i-h_j')$ are integers for all $i,j$
(recall that the $h_i$'s are defined up to a common summand). The set of parameters
that have integral difference with a given one forms a lattice $\param_{\Z}:=\Z^\ell$ in the space of all
parameters. There are finitely many hyperplanes (depending on $n$)
in $\mathbb{Q}\otimes_{\Z}\param_{\Z}$ that
split $\param_{\Z}$ into the union of cones to be called {\it chambers}. The walls depend on $n$ in such a way
that, as $n$ increases,  we need to add more walls. The categories $\mathcal{O}_c$ are the same
for any $c$ in a given chamber, see \cite[Section 4.2]{rouq_der},
(it is actually enough to assume that $c$ lies in an interior
of  a chamber).

The most interesting case is when $\kappa$ is a rational number with denominator
$e$ and $\kappa e s_1,\ldots,$ $ \kappa e s_\ell$ are all integers. We are going to assume
this until the end of the section. The general case
can be reduced to this one.

There are distinguished chambers to be called {\it asymptotic}. These are $2\ell!$ chambers
containing parameters $c$ such that  $s_1,\ldots,s_\ell$ satisfy $|s_i-s_j|>n$ (the factor of $2$
comes from the choice of a sign of $\kappa$ and $\ell!$ comes from ordering $s_1,\ldots,s_\ell$).
Let us suppose that
\begin{equation}\label{eq:asymp} \kappa<0, s_1\gg s_2\gg\ldots\gg s_\ell.
\end{equation}
Note that the condition that $c$ lies in an asymptotic chamber depends on $n$.
When $n$ becomes large enough, any given parameter stops lying in the asymptotic
chamber.

Then we have  the following result (note that we can impose weaker assumptions on $c$,
see  Proposition \ref{Prop:cryst_part}).

\begin{Prop}\label{Prop:domin_cryst}
Suppose $c$ is as in (\ref{eq:asymp}) and let $\lambda=(\lambda^{(1)},\ldots,
\lambda^{(\ell)})$ be a singular multi-partition for the $\hat{\slf}_e$-crystal.
Then the following holds.
\begin{enumerate}
\item $\lambda^{(\ell)}$ is divisible by $e$, i.e., there is a partition $\lambda'$
such that $\lambda^{(\ell)}=e\lambda'$. We have $q(\lambda)=|\lambda'|$.
\item The annihilation operator $\tilde{e}^\infty_i, i\in \Z,$ for the $\slf_\infty$-crystal
takes $\lambda$ to the multipartition $\underline{\lambda}$ specified by
$\underline{\lambda}^{(j)}=\lambda^{(j)}$ for $j<\ell$ and $\underline{\lambda}^{(\ell)}=e\underline{\lambda}'$,
where $\underline{\lambda}'$ is obtained from $\lambda'$ by removing an $i$-box
(if there are no removable $i$-boxes in $\lambda'$, then we set $\tilde{e}^{\infty}_i\lambda=0$).
\end{enumerate}
\end{Prop}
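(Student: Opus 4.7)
The plan is to combine Uglov's abacus-theoretic description of the $\hat{\slf}_e$-crystal on $\Part_\ell$ with the Shan--Vasserot construction of the $\slf_\infty$-crystal via the Heisenberg action (Lemma \ref{Lem:supp_Heis_cryst}), specialized to the asymptotic chamber. Parts (1) and (2) will be proved in parallel, and the formula $q(\lambda)=|\lambda'|$ in (1) will follow from (2) together with Lemma \ref{Lem:supp_Heis_cryst}.

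First I would analyze the $\hat{\slf}_e$-singular multipartitions in the asymptotic chamber. In Uglov's abacus model, each $\lambda=(\lambda^{(1)},\ldots,\lambda^{(\ell)})$ with charge $\mathbf{s}$ corresponds to an $\ell$-runner abacus, and the crystal operators act via bead moves governed by a reading order depending on $\mathbf{s}$. When $s_1\gg\ldots\gg s_\ell$ with $|s_i-s_{i+1}|$ dominating $n$, this reading order sorts the boxes strictly by component, so that $\hat{\slf}_e$-singularity decouples across components: it imposes a local constraint on $\lambda^{(\ell)}$ while forcing any removable/addable box of $\lambda^{(j)}$ with $j<\ell$ to be canceled internally. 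The local constraint on $\lambda^{(\ell)}$, translated to its single abacus with shift $s_\ell$, says that every removable $i$-bead is shadowed by an addable $i$-bead of the same residue; an elementary abacus argument then shows that this happens if and only if all parts of $\lambda^{(\ell)}$ are divisible by $e$, giving $\lambda^{(\ell)}=e\lambda'$.

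For (2), I would invoke the description of the level-$1$ $\slf_\infty$-crystal from Lemma \ref{Lem:supp_Heis_cryst}, which is built from the Heisenberg action on Fock space. The creation operator adds an $e$-ribbon of content $i$ to $\lambda$, and since the $\hat{\slf}_e$- and $\slf_\infty$-crystals commute (Section \ref{SS_Heis_cryst}), the $\slf_\infty$-crystal preserves the set of $\hat{\slf}_e$-singular multipartitions. In the asymptotic chamber, the only way to add an $e$-ribbon and remain $\hat{\slf}_e$-singular is to add it entirely inside the last component; since $\lambda^{(\ell)}=e\lambda'$, such $e$-ribbons are precisely the ``thickenings'' of boxes of $\lambda'$, and removing an $i$-ribbon from $e\lambda'$ is the same as removing an $i$-box from $\lambda'$. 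This gives the formula for $\tilde{e}_i^\infty$ in (2). Iterating, the $\slf_\infty$-depth of $\lambda$ equals the depth of $\lambda'$ in the standard $\slf_\infty$-crystal on partitions, which is $|\lambda'|$; combined with Lemma \ref{Lem:supp_Heis_cryst} this yields $q(\lambda)=|\lambda'|$.

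The main obstacle is verifying that Uglov's reading order in the asymptotic chamber really does sort lexicographically by component, so that the $\hat{\slf}_e$-singularity condition localizes to $\lambda^{(\ell)}$ in the manner asserted, and correspondingly that the Heisenberg operator acts only in that component. This compatibility is a careful but essentially combinatorial computation with Uglov residues and the $s_j$-shifts, and it relies crucially on $|s_i-s_{i+1}|>n$. Once this is in place, the rest of the argument is elementary abacus bookkeeping that reduces to the $\ell=1$ analysis of \cite{Wilcox}.
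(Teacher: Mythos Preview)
Your treatment of the divisibility claim in (1) is essentially the paper's argument translated into abacus language: the asymptotic condition $s_1\gg\ldots\gg s_\ell$ forces the $z$-signature of $\lambda^{(\ell)}$ to sit at the tail of the full $z$-signature of $\lambda$, so $\hat\slf_e$-singularity of $\lambda$ forces the reduced signature of $\lambda^{(\ell)}$ alone to be all $+$'s, hence $e\mid\lambda^{(\ell)}$. That part is fine.

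The gap is in your argument for (2) and for the value of $q(\lambda)$. You write that ``the creation operator adds an $e$-ribbon of content $i$ to $\lambda$'' and then reason combinatorially about which $e$-ribbon additions preserve $\hat\slf_e$-singularity. But the $\slf_\infty$-crystal here is \emph{defined} categorically: $\tilde a_\mu\lambda^0$ is the label of the unique simple constituent of $A_\mu L_c(\lambda^0)$ with $p_c=0$ and $q_c=|\mu|$ (Section~\ref{SS_Heis_cryst}), and the crystal operators are transported from the standard crystal on $\Part_1$ along the bijection $(\mu,\lambda^0)\mapsto\tilde a_\mu\lambda^0$. Nothing available prior to this proposition tells you that $\tilde a_\mu\lambda^0$ is obtained from $\lambda^0$ by adding boxes or ribbons at all, let alone where they go; Lemma~\ref{Lem:supp_Heis_cryst} only identifies the depth with $q_c$ and gives no combinatorial formula for the operators. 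So the step ``the only way to add an $e$-ribbon and remain singular is inside the last component, hence that is what $\tilde f_i^\infty$ does'' is circular: it presupposes exactly the description of $\tilde a_\mu$ that the proposition is meant to establish.

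The paper closes this gap by a representation-theoretic route (Proposition~\ref{Prop:cryst_part}), not a combinatorial one. With $\underline\lambda$ obtained from $\lambda$ by emptying the last component and $e\mu=\lambda^{(\ell)}$, one observes that $\Delta(\lambda)$ is the lowest standard constituent of $\Ind\bigl(\Delta(\underline\lambda)\boxtimes\Delta^A(e\mu)\bigr)$ in the highest weight order, so $L(\lambda)$ lies in its head; Lemma~\ref{Lem:Ind_support} then gives $q_c(\lambda)\geqslant|\mu|$. Equality is forced by the counting bijection~(\ref{eq:bijection}). Finally $\tilde a_\mu\underline\lambda=\lambda$ is proved by induction on $\underline\lambda$, using Lemma~\ref{Lem:Ind_support} again to exclude contributions from the kernel of $\Delta^A(e\mu)\twoheadrightarrow L^A(e\mu)$ and from the radical of $\Delta(\underline\lambda)$. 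These steps genuinely use the highest weight structure and the support theory of $\OCat_c$; your abacus bookkeeping does not supply a substitute for them.
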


Now let us explain what happens when we cross a wall. Let $c,c'$ be two parameters
with integral difference lying in two chambers separated by a wall.

\begin{Prop}\label{Prop:wc_bij}
There is a bijection $$\mathfrak{wc}_{c'\leftarrow c}: \bigsqcup_{k\leqslant n} \Part_\ell(m)
\rightarrow \bigsqcup_{k\leqslant n}\Part_\ell(m)$$
that preserves $k$ and  intertwines the annihilation operators for the $\hat{\slf}_e$- and $\slf_\infty$-crystals. In particular, it preserves supports.
This bijection is given by a combinatorial recipe to be explained below in Section \ref{SS_wc_bij_comput}.
\end{Prop}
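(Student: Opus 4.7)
The plan is to define $\wc_{c'\leftarrow c}$ categorically, as the bijection on simples induced by a wall-crossing derived equivalence, and then to obtain the two intertwining properties from the compatibility of that equivalence with the categorifications of the $\hat{\slf}_e$- and Heisenberg actions on $\bigoplus_n D^b(\OCat_c(n))$. Concretely, for $c, c'$ in adjacent chambers with integral difference there is a derived equivalence $\WC_{c'\leftarrow c}\colon D^b(\OCat_c) \xrightarrow{\sim} D^b(\OCat_{c'})$ of Rouquier/Losev type, and the crucial feature is that it is a \emph{perverse} equivalence with respect to the filtration of $\OCat_c$ by Serre subcategories of modules whose support has prescribed dimension. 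A perverse equivalence sends simples to shifts of simples and thus induces a bijection on $\Irr(\OCat_c(k)) = \Part_\ell(k)$ for each $k$; taking the disjoint union over $k\leqslant n$ gives the candidate $\wc_{c'\leftarrow c}$.

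To verify intertwining with the $\hat{\slf}_e$-crystal, I would appeal to the fact that $\WC_{c'\leftarrow c}$ commutes, up to natural isomorphism, with the Bezrukavnikov--Etingof parabolic induction/restriction functors $\Ind,\Res$ that categorify the $\hat{\slf}_e$-action. The reason is that both $\WC_{c'\leftarrow c}$ and $\Res$ are compatible with a common deformation of $H_c$ in the direction normal to the wall: the restricted parameter $\underline{c}$ lies on the analog of the wall separating $c$ and $c'$, so $\WC$ and $\Res$ intertwine. Combined with perversity, this commutation at the level of functors produces commutation of the induced bijection on simples with the crystal operators $\tilde{e}_i$.

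The intertwining with the $\slf_\infty$-crystal follows by the same pattern applied to the Heisenberg action. The Heisenberg operators relevant here are categorified, following Shan--Vasserot, by parabolic induction/restriction between $\OCat_c$ for $G(\ell,1,n)$ and $\OCat$ for parabolic subgroups of the form $G(\ell,1,n-em)\times \Sym_m$ (or closely related ones that add $e$ boxes at a time). A deformation argument parallel to the one for Bezrukavnikov--Etingof restriction shows these functors also commute with $\WC_{c'\leftarrow c}$, yielding commutation with $\tilde{e}^\infty_i$. The explicit combinatorial recipe referred to in the statement is then extracted in Section \ref{SS_wc_bij_comput} by unraveling the perversity data of $\WC_{c'\leftarrow c}$ on each support stratum and reducing, via the already established intertwining with the $\hat{\slf}_e$-crystal and Proposition \ref{Prop:domin_cryst}, to the action of $\wc_{c'\leftarrow c}$ on singular multipartitions. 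The main obstacle is precisely the Heisenberg compatibility: the Kac-Moody analogue is by now classical, whereas the Heisenberg construction must be set up in a family of parameters before its compatibility with wall-crossing can be checked.
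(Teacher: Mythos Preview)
Your outline is essentially the paper's own strategy: the bijection is the one induced by the perverse wall-crossing equivalence of \cite{rouq_der}, and the intertwining with both crystals is deduced from the commutation of $\WC_{c'\leftarrow c}$ with the Bezrukavnikov--Etingof induction/restriction functors (Propositions \ref{Prop:WC_restr} and \ref{Prop:WC_restr_essent}), applied respectively to the parabolics $G(\ell,1,n-1)$ and $G(\ell,1,n-ek)\times\Sym_{ek}$; this is exactly the content of Proposition \ref{Prop:WC_cryst_commut}.

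Two points deserve correction. First, your description of perverse equivalences is not right: a perverse equivalence does \emph{not} send a simple to a shift of a simple. Rather, $\WC_{c'\leftarrow c}L$ has in general several nonzero cohomology groups; the bijection on simples comes from the abelian equivalences on the filtration subquotients $\Cat^c_i/\Cat^c_{i+1}\xrightarrow{\sim}\Cat^{c-\psi}_i/\Cat^{c-\psi}_{i+1}$ (property (P3) in Proposition \ref{Prop:perverse}). This matters for the Heisenberg intertwining, which is more delicate than ``the same pattern'': because $\tilde{a}_\mu$ is defined via heads of $A_\mu L(\lambda)$ rather than via an exact categorical $\slf_2$-action, the paper's argument in part (2) of Proposition \ref{Prop:WC_cryst_commut} tracks a specific simple constituent of $H_i(\WC\circ A_\mu L(\lambda))$ through the perversity filtration, not just functor commutation. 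Second, the extraction of the combinatorial recipe in Section \ref{SS_wc_bij_comput} hinges on a step you do not mention: Proposition \ref{Prop:wc_bij_comp}, that $\wc_{c-\psi\leftarrow c}$ is independent of the choice of Zariski generic $c$ on the wall. This is what allows the paper to pass to a Weil generic parameter on the wall, where the crystal degenerates to a product of $\slf_\infty$-crystals and the bijection is pinned down by Lemma \ref{Lem:cryst_comput_expl}. Unraveling perversity directly, as you suggest, would not by itself produce the recipe.
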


Let us explain how $\mathfrak{wc}_{c'\leftarrow c}$ arises.
In \cite[Proposition 5.3, Theorem 6.1]{rouq_der} we have established an equivalence $\WC_{c'\leftarrow c}:D^b(\OCat_c)
\xrightarrow{\sim} D^b(\OCat_{c'})$ and have shown that it is perverse (in the sense
of Chuang and Rouquier).
This gives rise to a bijection $\operatorname{Irr}(\OCat_c)\rightarrow \Irr(\OCat_{c'})$
and this is the bijection $\wc_{c'\leftarrow c}$ that we need.
We would like to emphasize that the fact that the bijections preserve
support does not follow from the claim that these bijections come from
perverse equivalences.  The reason why these bijections
preserve supports is that they are given by derived tensor products with Harish-Chandra bimodules,
see Corollary \ref{Cor:wc_bij_supp} for the proof.

We would like to emphasize that we only have a relatively explicit combinatorial recipe for
a bijection between two adjacent chambers\footnote{A more explicit formula for the wall-crossing bijection
was recently obtained by Jacon and
Lecouvey in \cite{JL})}. Of course, for arbitrary chambers we can
take the composition of such bijections. But the resulting bijection is going to
be complicated. Still the Heisenberg crystal operators are computable, e.g., with a computer\footnote{A more explicit combinatorial
formula for the Heisenberg crystal has been recently obtained by Gerber and Norton
in \cite{GN}.}.

\subsection{Other results}
We also compute the wall-crossing bijection for the wall $\kappa=0$
in the case when $\ell=1$, see Section \ref{SS_wc_bij_comput}. This bijection should be thought as an extension
of the Mullineux involution from the modular representation theory of the symmetric groups.
This wall-crossing bijection is expected to play an important role in the study of the representation
theory of type A rational Cherednik algebras over fields of large enough positive characteristic.

\subsection{Content}
Sections \ref{S_RCA} and \ref{S_cyclot} basically do not contain any new results.
Sections \ref{S_wc_further} and \ref{S_main_proofs} are new.

In Section \ref{S_RCA} we recall several known results and constructions for
general rational Cherednik algebras. In Section \ref{SS_RCA} we recall
the definition of these algebras and basic structural results
following \cite{EG}. In Section \ref{SS_hw_cat} we recall
basics about highest weight categories.
In Section \ref{SS_RCA_O} we define categories $\mathcal{O}$.  In Section \ref{SS_HC_bimod}
we recall Harish-Chandra (shortly, HC) bimodules for rational Cherednik algebras.
In Section \ref{SS_compl} we recall isomorphisms of some completions
of rational Cherednik algebras.
Section \ref{SS_IndRes} deals with induction
and restriction functors for categories $\mathcal{O}$, \cite{BE},
and for HC bimodules, \cite{sraco}.
In the next part, Section \ref{SS_chamber}, we elaborate on the chamber
decomposition for the space of parameters. Finally, in Section
\ref{SS_WC} we recall the definition of wall-crossing functors between categories
$\mathcal{O}$ with different parameters introduced in \cite{rouq_der}.
We also recall wall-crossing bijections between the sets of simples
in those categories.

Section \ref{S_wc_further} establishes some new properties of the
wall-crossing functors and the wall-crossing bijections.
First, in Section \ref{SS_wc_bij_gen} we show that a wall-crossing bijection $\wc_{c-\psi\leftarrow c}$
depends not on $c$ but on the wall being crossed.
Second, we show that wall-crossing functors commute with restriction
and induction functors, Section \ref{SS_WC_vs_Res}.
These are  crucial tools to compute these bijections
in the cases we need.

In Section \ref{S_cyclot} we recall a few additional facts about categories
$\mathcal{O}_c(W)$ for $W=G(\ell,1,n)$. In Section \ref{SS_cat_act}
we recall categorical Kac-Moody (\cite{Shan} and \cite{GM})
and Heisenberg, \cite{SV}, actions on the cyclotomic categories
$\mathcal{O}$, as well as the crystal for the Kac-Moody
action, \cite{cryst}.  Finally, Section \ref{SS_equi} recalls
a decomposition of  cyclotomic categories $\mathcal{O}$ that is used
to reduce the computation of supports to some special parameters $c$.

In Section \ref{S_main_proofs} we prove results explained in Section \ref{SS_main_res}.
In Section \ref{SS_Heis_cryst}, we introduce an $\slf_\infty$-crystal
on $\Part_\ell$ and establish some of its basic properties. We compute
the corresponding crystal operators in many chambers including asymptotic ones
in Section \ref{SS_comput_Heis}.
Then we show that the wall-crossing bijections commute with the Kac-Moody
and Heisenberg crystal operators, Section \ref{SS_wc_vs_cryst}. Next, we explain how to compute
the wall-crossing bijections through hyperplanes, Section \ref{SS_wc_bij_comput}.
 We summarize the computations of supports and of
Heisenberg crystals in Section \ref{SS_comput_summ} and give
an example of computation in Section \ref{SS_comput_example}.

Section \ref{S_App} is an appendix describing various developments related
to the main body of the paper.  In Section \ref{S_App_kappa0} we consider
the computation of supports in the case when $\kappa=0$. In Section
\ref{S_gen_groups} we explain how to reduce a computation of supports  for the complex reflection groups $G(\ell,r,n)$
to that for $G(\ell,1,n)$. Finally,
in Section \ref{S_commut_cryst} we explain a conjectural crystal version of the
level-rank duality for affine type A Kac-Moody algebras (involving three commuting
crystals)  and its categorical meaning\footnote{After the present paper appeared,
the problem of showing that the three crystals commute was solved by Gerber
in \cite{Gerber} using combinatorial methods.}. It is based on
a variant of techniques used in \cite{RSVV} and \cite{VV_proof}
combined with an approach of Bezrukavnikov and Yun, \cite{BY}
to Koszul duality for Kac-Moody groups.

{\bf Acknowledgements}. I would like to thank  Roman Bezrukavnikov for
many stimulating discussions. Also I am grateful to Nicolas Jacon, Emily Norton and
Seth Shelley-Abrahamson for numerous comments on the preliminary version of this text. 
I would also like to thank the referee for their careful reading of the paper and 
helpful comments. This work was
partially supported by the NSF under grants DMS-1161584, DMS-1501558.

\section{Cherednik algebras and their categories $\mathcal{O}$}\label{S_RCA}
\subsection{Rational Cherednik algebras}\label{SS_RCA}
Let $W$ be a complex reflection group and $\h$  its reflection representation. For a reflection hyperplane
$H$, the pointwise stabilizer $W_H$ is cyclic, let $\ell_H$ be the order of this group. The set of
the reflection hyperplanes will be denoted by $\mathfrak{H}$. Let $\alpha_H,\alpha^\vee_H$ denote
the eigenvectors for $W_H$ in $\h^*,\h$ with non-unit eigencharacters, partially normalized
by $\langle\alpha_H,\alpha_H^\vee\rangle=2$.
For a complex reflection $s$ we write $\alpha_s,\alpha_s^\vee$ for $\alpha_H,\alpha_H^\vee$
where $H=\h^s$.   Let $c:S\rightarrow \C$
be a function constant on the conjugacy classes. The space of such functions is denoted
by $\param$, it is a vector space of dimension $|S/W|$.

By definition, \cite[Section 1.4]{EG}, \cite[Section 3.1]{GGOR},
the rational Cherednik algebra  $H_c(=H_c(W)=H_c(W,\h))$ is the quotient of $T(\h\oplus \h^*)\# W$
by the following relations:
$$[x,x']=[y,y']=0, \,\, [y,x]=\langle y,x\rangle-\sum_{s\in S}c(s)\langle x,\alpha_s^\vee\rangle\langle y,\alpha_s\rangle s, \quad x,x'\in \h^*, y,y'\in \h.$$

\subsubsection{Deformation}
We would like to point out that $H_c$ is the specialization to
$c$  of a $\C[\param]$-algebra $H_{\param}$ defined as follows. The
space $\param^*$ has basis ${\bf c}_s$ naturally indexed
by the conjugacy classes of reflections. Then $H_{\param}$ is the quotient of $T(\h\oplus \h^*)\# W\otimes \C[\param]$
by the relations similar to the above but where  $c(s)\in \C$ is replaced with ${\bf c}_s\in \param^*$.
For a commutative algebra $R$ with a $W$-invariant map $c:S\rightarrow R$ we can consider the algebra $H_{R,c}=R\otimes_{\C[\param]}H_{\param}$. If $R=\C[\param^1]$ for an affine subspace $\param^1\subset \param$,
then we write $H_{\param^1}$ instead of $H_{R,c}$.

\subsubsection{PBW property and triangular decomposition}
Let us recall some structural results about $H_c$.
The algebra $H_c$ is filtered with $\deg \h^*=0$, $\deg W=0, \deg \h=1$. The associated graded algebra is $S(\h\oplus \h^*)\#W$, \cite[Section 1.2]{EG}. This yields
the triangular decomposition $H_c=S(\h^*)\otimes \C W\otimes S(\h)$, \cite[Section 3.2]{GGOR}.
The algebra $H_{\param}$ is also filtered  with $\deg\param^*=1$.
We get $H_{\param}=S(\h^*)\otimes \C[\param]W\otimes S(\h)$
as a $\C[\param]$-module.

\subsubsection{Euler element}
There is an {\it Euler element} $h\in H_c$ satisfying $[h,x]=x, [h,y]=-y, [h,w]=0$. It is constructed as follows.
Pick a basis $y_1,\ldots,y_n\in \h$ and let $x_1,\ldots,x_n\in \h^*$ be the dual basis. For $s\in S$, let
$\lambda_s$ denote the eigenvalue of $s$ in $\h^*$ different from $1$. Then
\begin{equation}\label{eq:Euler}
h:=\sum_{i=1}^n x_i y_i+\frac{n}{2}-\sum_{s\in S} \frac{2c(s)}{1-\lambda_s}s.\end{equation}

\subsubsection{Spherical subalgebras}\label{SSS_spherical}
Consider the averaging idempotent $e:=|W|^{-1}\sum_{w\in W}w\in \C W\subset H_c$. The {\it spherical subalgebra}
by definition is $eH_ce$.
More generally, let $\chi$ be a one-dimensional character of $W$. Let $e_\chi$ be the corresponding idempotent
in $\C W$. Form the algebra $e_\chi H_c e_\chi$.

We can also consider spherical subalgebras over $\param$, we get the algebras $e_\chi H_{\param} e_\chi$.
These algebras inherit the filtration from $H_{\param}$, the associated graded coincides
with $e_\chi \gr H_{\param} e_\chi$. Let $Z_{\param}$ denote the center of $\gr H_{\param}$.
It was shown by Etingof and Ginzburg in \cite[Theorem 3.1]{EG} that the map $z\mapsto ze_\chi$ defines
an isomorphism $Z_\param \rightarrow \gr e_\chi H_{\param} e_\chi$. In particular, the associated
graded algebras of $e_\chi H_{\param} e_\chi$ are all identified.

It turns out that $e_\chi H_{\param} e_\chi\cong e H_{\param} e$, where the isomorphism
induces a shift by an element  $\bar{\chi}\in \param$ on $\param$. The element $\bar{\chi}$
is constructed as follows. We can find elements $h_{H,j}\in \C$  with $j=0,\ldots,\ell_{H}-1$
and $h_{H,j}=h_{H',j}$ for $H'\in WH$  such that
\begin{equation}\label{eq:c_to_h}c(s)=\sum_{j=1}^{\ell-1}\frac{1-\lambda_s^j}{2}(h_{\h^s,j}-h_{\h^s,j-1})\end{equation}
Clearly, for fixed $H$, the numbers $h_{H,0},\ldots, h_{H,\ell_H-1}$ are defined up to a common summand.
We can recover the elements $h_{H,i}$ by the formula
\begin{equation}\label{eq:h_to_c} h_{H,i}=\frac{1}{\ell_H}\sum_{s\in W_H\setminus \{1\}}\frac{2c(s)}{\lambda_s-1}\lambda_s^{-i}
\end{equation}
Note that $\sum_{i=0}^{\ell_H-1}h_{H,i}=0$. We will view $h_{H,i}$ as an element of $\param^*$
whose value on $c:S\rightarrow \C$ is given by (\ref{eq:h_to_c}).

There is a homomorphism  $\operatorname{Hom}(W,\C^\times)
\rightarrow \prod_{H\in \mathfrak{H}/W} \operatorname{Irr}(W_H)$  given by
the restriction. It turns out that this map is an isomorphism, see \cite[3.3.1]{rouqqsch}. So to an arbitrary
$W$-invariant collection of elements $(a_H)$ with $0\leqslant a_H\leqslant \ell_H-1$ we can assign the character of $W$
that sends $s$ to $\lambda_s^{-a_H}$. To a character $\chi$ given in this form we assign the element
$\bar{\chi}\in \param$  by $h_{H,i}(\bar{\chi})=1-\frac{a_H}{\ell_H}$ if $i\geqslant \ell-a_H$
and $-\frac{a_H}{\ell_H}$ if $i<\ell-a_H$.

\begin{Lem}\label{Lem:spher_iso}
There is an isomorphism
$\iota: e H_{\param}e\xrightarrow{\sim} e_\chi H_{\param} e_{\chi}$ of filtered $\C$-algebras
that is the identity on the associated graded algebras and  maps $p\in \param^*$ to
$p+\langle\bar{\chi},p\rangle$.
\end{Lem}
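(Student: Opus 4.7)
The plan is to construct $\iota$ by conjugation with a $\chi$-semi-invariant inside the Dunkl embedding, exploiting a classical parameter-shift identity for Dunkl operators.

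Choose $a_H \in \{0,\ldots,\ell_H-1\}$ (constant on $W$-orbits of hyperplanes) representing $\chi$, and set $\delta_\chi := \prod_{H \in \mathfrak{H}} \alpha_H^{a_H} \in S(\h^*)$, so that $w \delta_\chi = \chi(w)^{-1} \delta_\chi$ for $w \in W$. Recall the Dunkl embedding $\Theta: H_\param \hookrightarrow \C[\param] \otimes D(\h^{reg}) \# W$, which becomes an isomorphism after inverting $\delta := \prod_H \alpha_H$. Inside this localization, $\delta_\chi$ is invertible, and the central computation is the Heckman--Opdam-type shift identity
\[
\delta_\chi^{-1}\, \Theta_\param(y)\, \delta_\chi \;=\; \Theta_{\param+\bar\chi}(y), \qquad y \in \h,
\]
which reduces to the elementary $\delta_\chi^{-1} \partial_y \delta_\chi = \partial_y - \sum_H a_H \langle y, \alpha_H\rangle/\alpha_H$ combined with a $W_H$-isotypic rearrangement on each reflection hyperplane. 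Verifying that the resulting parameter shift is exactly $\bar\chi$ as defined via (\ref{eq:c_to_h}) and (\ref{eq:h_to_c}) is the main obstacle in the proof; the definition of $\bar\chi$ in Section \ref{SSS_spherical} is calibrated precisely for this matching, and the check reduces to an eigenspace bookkeeping on each $W_H$.

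Given the shift identity, define $\iota$ as the composite of two operations: first conjugation by $\delta_\chi$ inside the localized Dunkl picture, which by the shift identity carries $\Theta(eH_\param e)$ onto the image of $e_\chi H_{\param+\bar\chi} e_\chi$; second, the $\C$-algebra identification $e_\chi H_{\param+\bar\chi}e_\chi \xrightarrow{\sim} e_\chi H_\param e_\chi$ induced by the affine shift $\param^* \ni p \mapsto p + \langle \bar\chi, p\rangle \in \C[\param]$ (which is a $\C$-algebra, not $\C[\param]$-algebra, isomorphism). The semi-invariance relation $e\delta_\chi = \delta_\chi e_\chi$ in $S(\h^*) \# W$ ensures that conjugation by $\delta_\chi$ indeed sends $eH_\param e$ into $e_\chi (H_\param[\delta^{-1}]) e_\chi$; showing that the image in fact lies in the unlocalized $e_\chi H_\param e_\chi$ follows because the localization and unlocalized algebras have the same associated graded (both identified with $Z_\param$). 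On $p \in \param^*$, where conjugation by $\delta_\chi$ acts trivially (as $p$ is central in $\C[\param]$), only the second operation contributes, yielding $\iota(p) = p + \langle\bar\chi, p\rangle$ as required.

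The remaining assertions are formal. The map $\iota$ is filtered because $\delta_\chi \in S(\h^*)$ has filtration degree zero ($\deg \h^* = 0$) and the parameter shift adds a constant to a degree-$1$ element, hence stays within filtration degree $1$. At the level of associated graded, conjugation by $\delta_\chi$ acts trivially on the commutative algebra $Z_\param[\delta^{-1}] \supset \gr \delta_\chi$, while the parameter shift $p \mapsto p + \langle\bar\chi, p\rangle$ is invisible in the degree-$1$ symbol (the correction $\langle\bar\chi, p\rangle$ lies in lower degree). Combined with the canonical identification $\gr(eH_\param e) = \gr(e_\chi H_\param e_\chi) = Z_\param$ from \cite[Theorem 3.1]{EG}, this gives $\gr \iota = \operatorname{id}$.
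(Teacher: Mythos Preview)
Your argument is essentially correct and coincides with the approach the paper invokes: the paper's proof is simply a citation to \cite[Proposition 5.6]{BC}, and what you have written is precisely the Berest--Chalykh construction unpacked --- build the isomorphism as conjugation by the $\chi$-semi-invariant $\delta_\chi$ inside the Dunkl localization, use the Heckman--Opdam-type shift identity to identify the parameter translation as $\bar\chi$, and read off the filtered and associated-graded properties.

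One imprecision deserves flagging. You justify that the image of $\iota$ lies in the \emph{unlocalized} algebra $e_\chi H_\param e_\chi$ by asserting that ``the localization and unlocalized algebras have the same associated graded (both identified with $Z_\param$)''. That is not true as stated: $\gr\bigl(e_\chi H_\param[\delta^{-1}] e_\chi\bigr)$ is the localization of $Z_\param$ at the image of $\delta$, which is strictly larger. The correct argument is either (a) check directly on generators --- conjugation by $\delta_\chi$ sends $x\in\h^*$ to itself, $w\in W$ to $\chi(w)^{-1}w$, and the Dunkl image of $y\in\h$ to the Dunkl image at parameter shifted by $\bar\chi$, so $\delta_\chi^{-1}H_\param\delta_\chi$ lands in (the shifted copy of) $H_\param$ before localizing --- or (b) use the fact you do establish, namely that $\gr\iota$ is the identity on $Z_\param$, and run a descending induction on filtration degree: if $b$ lies in the localized algebra with symbol in $Z_\param=\gr(e_\chi H_\param e_\chi)$, subtract an unlocalized lift of the symbol and drop filtration degree; since the filtration is bounded below this terminates. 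Either route closes the gap. (There is also a harmless sign slip in your formula for $\delta_\chi^{-1}\partial_y\delta_\chi$.)
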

\begin{proof}
The isomorphism is constructed in \cite[Proposition 5.6]{BC} (for a specialized parameter,
but our case is similar).
%
\end{proof}

\subsection{Highest weight categories}\label{SS_hw_cat}
In this section we recall highest weight categories.

Let $\Cat$ be a $\C$-linear abelian category equivalent to the category of modules
over some finite dimensional associative $\C$-algebra. Let $\Lambda$ be an indexing set for the
simples in $\Cat$, for $\tau\in \Lambda$, we write $L(\tau)$ for the simple object indexed by $\tau$ and $P(\tau)$
for its projective cover. By a highest weight category we mean a triple $(\Cat,\leqslant, \{\Delta(\tau)\}_{\tau\in \Lambda})$, where $\leqslant$ is a partial order on $\Lambda$ and $\Delta(\tau), \tau\in \Lambda,$
is a collection of {\it standard} objects in $\Cat$ satisfying the following conditions:
\begin{itemize}
\item[(i)] $\Hom_{\Cat}(\Delta(\tau),\Delta(\tau'))\neq 0$ implies $\tau\leqslant \tau'$.
\item[(ii)] $\End_{\Cat}(\Delta(\tau))=\C$.
\item[(iii)] There is an epimorphism $P(\tau)\twoheadrightarrow \Delta(\tau)$ whose kernel admits
a filtration with successive quotients of the form $\Delta(\tau')$ with $\tau'>\tau$.
\end{itemize}

\subsubsection{Costandard and tilting objects}
Recall that in any highest weight category
$\Cat$ one has costandard objects $\nabla(\tau), \tau \in \Lambda,$
with $\dim \Ext^i(\Delta(\tau),\nabla(\xi))=\delta_{i,0}\delta_{\tau,\xi}$.

By a tilting object in $\Cat$ we mean an object that is both standardly
filtered (=admits a filtration with standard quotients) and costandardly
filtered. The indecomposable tiltings are in bijection with $\Lambda$.
By a tilting generator we mean a tilting that contains every indecomposable
tilting as a summand.

\subsubsection{Highest weight subcategories}
Let $\Lambda_0$ be a poset ideal in $\Lambda$ (i.e., a subset such that, for each
$\lambda\in \Lambda_0, \lambda'\leqslant \lambda$, we have $\lambda'\in \Lambda_0$).
Consider the Serre subcategory
$\Cat(\Lambda_0)\subset \Cat$ spanned by the simples $L(\tau), \tau\in \Lambda_0$.
This is a highest weight category with respect to the order restricted from $\Lambda$
and with standard objects $\Delta(\tau), \tau\in \Lambda_0$ (and costandard
objects $\nabla(\tau),\tau\in \Lambda_0$). Moreover, the natural functor
$D^b(\Cat(\Lambda_0))\rightarrow D^b(\Cat)$ is a full embedding so that
$D^b(\Cat(\Lambda_0))$ gets identified with the full subcategory
$D^b_{\Cat(\Lambda_0)}(\Cat)$ of all objects with homology in $\Cat(\Lambda_0)$.
As usual, $D^b(\Cat/\Cat(\Lambda_0))$ gets identified with
$D^b(\Cat)/D^b(\Cat(\Lambda_0))$.

\subsubsection{Ringel duality}
Now recall the Ringel duality. Let $\Cat$ be a highest weight category
and let $T$ be a tilting generator. Set $\,^\vee\Cat:=\End_{\Cat}(T)^{opp}\operatorname{-mod}$.
Then $\,^\vee\Cat$ is a highest weight category with standard objects
$\,^\vee\Delta(\tau):=\Hom(T,\nabla(\tau))$. The sets $\Irr(\,^\vee\Cat)$ and
$\Irr(\Cat)$ are identified and the orders on them are opposite.
The functor $\mathcal{R}:=R\Hom_{\Cat}(T,\bullet)$ is a derived
equivalence $D^b(\Cat)\xrightarrow{\sim}D^b(\,^\vee\Cat)$ called
the Ringel duality functor. Note that $\,^\vee(\,^\vee\Cat)$ is naturally
identified with $\Cat^{opp}$. We write $\Cat^\vee$ for $(\,^\vee\Cat)^{opp}$
so that $\,^\vee (\Cat^\vee)=\Cat$. So we get a derived equivalence
$\mathcal{R}^{-1}:D^b(\Cat)\xrightarrow{\sim} D^b(\Cat^\vee)$
that maps $\Delta(\tau)$ to $\nabla^\vee(\tau)$.

\subsection{Categories $\mathcal{O}$}\label{SS_RCA_O}
Following \cite[Section 3.2]{GGOR}, we consider the full subcategory $\OCat_c(W)$ of $H_c\operatorname{-mod}$
consisting of all modules $M$ that are finitely generated over $S(\h^*)$ and such that
$\h$ acts on $M$ locally nilpotently.
For example, pick an irreducible representation $\tau$ of $W$. Then the {\it Verma
module} $\Delta_c(\tau):=H_c\otimes_{S(\h)\#W}\tau$ (here $\h$ acts by $0$ on $\tau$)
is in $\mathcal{O}_c(W)$.

\subsubsection{Supports}
To a module  $M\in\OCat_c(W)$ we can assign its support $\Supp(M)$ that, by definition, is the support of $M$
(as a coherent sheaf)  in $\h$. Clearly, $\Supp(M)$ is a closed $W$-stable subvariety.
For a parabolic subgroup $\underline{W}\subset W$, set $X(\underline{W})=W \h^{\underline{W}}$.
The support of $M$ is the union of some subvarieties $X(\underline{W})$. Moreover,
if $M$ is simple, then $\Supp(M)=X(\underline{W})$ for some $\underline{W}$.
See \cite[Section 3.8]{BE} for the proofs.

\subsubsection{Highest weight structure}
Let us describe a highest weight structure on $\OCat_c(W)$, \cite[Theorem 2.19]{GGOR}.
For the standard objects we take the Verma modules. A partial order on $\Lambda=\Irr(W)$
is introduced as follows. The element $\displaystyle \sum_{s\in S} \frac{2c(s)}{\lambda_s-1}s\in \C W$
is central so acts by a scalar, denoted by $c_\tau$ (and called the $c$-function), on $\tau$. 
We set $\tau<\xi$ if $c_\tau-c_\xi\in \Q_{>0}$ (we could take the coarser order by requiring the difference to lie in $\Z_{>0}$ but we do not need this). We write $<_c$ if we want to indicate the dependence on the parameter $c$.

The following is established in \cite[Section 4.3]{GGOR}.

\begin{Lem}\label{Lem:titl_support}
Let $L\in \Irr(\mathcal{O}_c(W))$ and let $T$ be a tilting generator in $\mathcal{O}_c(W)$.
Then $\dim\h-\dim \Supp(L)$ coincides with the minimal number $i$
such that $\Ext^i_{\mathcal{O}_c(W)}(T,L)\neq 0$.
\end{Lem}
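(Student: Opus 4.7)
My strategy is to reframe the $\Ext$ computation via Ringel duality and then reduce to the case of a finite-dimensional simple. Since $T$ is a tilting generator, the functor $\mathcal{R}=R\Hom_{\OCat_c(W)}(T,\bullet)$ is a derived equivalence $D^b(\OCat_c(W)) \xrightarrow{\sim} D^b(\,^\vee\OCat_c(W))$, and $\Ext^i_{\OCat_c(W)}(T,L)=H^i\mathcal{R}(L)$. Setting $d:=\dim\h-\dim\Supp(L)$, the lemma becomes the statement that $\mathcal{R}(L)\in D^{\geqslant d}$ with $H^d\mathcal{R}(L)\neq 0$; that is, the lowest non-vanishing cohomology of the Ringel dual of $L$ sits precisely in degree equal to the codimension of its support.

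For the reduction, choose a parabolic $\underline{W}\subseteq W$ with $\Supp(L)=X(\underline{W})=W\h^{\underline{W}}$, and apply the Bezrukavnikov--Etingof restriction functor $\Res_{\underline{W}}:\OCat_c(W)\to\OCat_{\underline{c}}(\underline{W})$ (Section \ref{SS_IndRes}). The module $\Res_{\underline{W}}(L)$ is non-zero because $\Supp(L)$ meets the stratum of $\underline{W}$, and it is finite-dimensional since its support in the reflection representation $\h/\h^{\underline{W}}$ of $\underline{W}$ is $\{0\}$. The key technical input is that $\Res_{\underline{W}}$ is exact, preserves tilting objects, and intertwines the Ringel duality functors of $\OCat_c(W)$ and $\OCat_{\underline{c}}(\underline{W})$ up to a cohomological shift by $\dim(\h/\h^{\underline{W}})=d$. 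These compatibilities relate the minimum cohomological degree of $\mathcal{R}(L)$ to that of the Ringel dual of $\Res_{\underline{W}}(L)$, shifted by $d$, and therefore reduce the lemma to the finite-dimensional case.

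In the remaining case, $L$ is finite-dimensional, $\underline{W}=W$, and the claim becomes that the minimum $i$ with $\Ext^i(T,L)\neq 0$ equals $\dim\h$. This can be checked directly, for example by producing a length-$\dim\h$ resolution of $L$ by standardly filtered modules (an analogue of the BGG resolution) and using that $\Ext^{>0}(T,\Delta)=0$ together with a pairing against a suitable Verma to detect non-vanishing of the top cohomology of $\mathcal{R}(L)$. The main obstacle I anticipate is the rigorous verification of the commutation of $\Res_{\underline{W}}$ with Ringel duality up to the correct homological shift: this is not a formal consequence of exactness and requires careful compatibility of $\Res_{\underline{W}}$ with tilting generators, leveraging the completion-theoretic construction of $\Res_{\underline{W}}$ recalled in Section \ref{SS_compl}. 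Once this compatibility is in place, the base case for finite-dimensional simples and the propagation through restriction handle the general statement.
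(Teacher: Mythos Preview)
Your plan has two concrete gaps that prevent it from going through.

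First, the base case relies on the claim that $\Ext^{>0}_{\OCat_c}(T,\Delta)=0$, and this is false. Tiltings satisfy $\Ext^{>0}(T,\nabla)=0$ (because they are standardly filtered) and $\Ext^{>0}(\Delta,T)=0$ (because they are costandardly filtered), but \emph{not} $\Ext^{>0}(T,\Delta)=0$. Already in the principal block of the BGG category $\mathcal{O}$ for $\slf_2$ one has $T(-2)=L(-2)=\nabla(-2)$ and $\Ext^1(\nabla(-2),\Delta(0))\neq 0$, the extension being $P(-2)$. Hence a BGG-type resolution by standardly filtered modules cannot be used to read off $\Ext^\bullet(T,L)$ as the cohomology of $\Hom(T,C_\bullet)$.

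Second, the reduction step is problematic in two ways. The claimed cohomological shift by $d=\dim(\h/\h^{\underline{W}})$ is not there: $\Res_{\underline{W}}$ is an exact functor between abelian categories, so any intertwining with Ringel duality it may enjoy is shift-free. With no shift, the base case for $\underline{W}$ already gives the minimal degree $\dim\underline{\h}=d$, which is the answer you want---but then you are using the lemma for $\underline{W}$, not bypassing it. More seriously, even granting a commutation $\,^\vee\!\Res\circ\mathcal{R}\cong\underline{\mathcal{R}}\circ\Res$, the restriction $^\vee\!\Res$ kills modules whose support does not contain $X(\underline{W})$, so from $^\vee\!\Res(H^i\mathcal{R}(L))=0$ for $i<d$ you cannot conclude $H^i\mathcal{R}(L)=0$. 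You would additionally need to know that every cohomology $H^i\mathcal{R}(L)$ has support containing $X(\underline{W})$, which is exactly the kind of support control the lemma is meant to supply.

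The argument recorded in \cite[Section 4.3]{GGOR} proceeds quite differently. One identifies Ringel duality on $\OCat_c(W)$ with a homological duality of the form $R\Hom_{\C[\h]}(-,\C[\h])$ (composed with a twist), and then the key point is that every simple $L\in\OCat_c(W)$ is Cohen--Macaulay over $\C[\h]$. The Cohen--Macaulay property says precisely that $\Ext^i_{\C[\h]}(L,\C[\h])$ is concentrated in the single degree $i=\dim\h-\dim\Supp(L)$, which translates into the desired statement about $\Ext^i(T,L)$. No parabolic restriction enters; the support information is extracted directly from commutative algebra over $S(\h^*)$.
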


\subsubsection{$K_0$ and characters}
We identify $K_0(\OCat_c(W))$ with $K_0(W\operatorname{-mod})$ by sending the
class $[\Delta_c(\tau)]$ to $[\tau]$.

Further, to a module $M\in\OCat_c(W)$ we can assign its {\it character}
$\mathsf{ch}(M):=\sum_{a\in \C}[M_a]_Wq^a$, where $M_a$ is the generalized
eigenspace for the action of $h$ with eigenvalue $a$, and $[M_a]_W$ is the class of $M_a$
in $K_0(W\operatorname{-mod})$.

We have the following easy lemma.

\begin{Lem}\label{Lem:K0_class_recover}
Let $M,M'$ be two objects in $\OCat_c(W)$ such that the coefficients
of $q^a$ in $\mathsf{ch}(M)$ and $\mathsf{ch}(M')$ coincide for any
$a$ of the form $c_\tau, \tau\in \Irr(W)$. Then $[M]=[M']$.
\end{Lem}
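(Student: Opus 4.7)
The plan is to pass through the basis of $K_0(\mathcal{O}_c(W))$ given by the classes of standards. Since $\{[\Delta_c(\tau)]\}_{\tau\in\Irr(W)}$ is a $\Z$-basis of this group (as already noted in the identification of $K_0(\mathcal{O}_c(W))$ with $K_0(W\operatorname{-mod})$), I would write $[M]-[M'] = \sum_\tau n_\tau [\Delta_c(\tau)]$ and aim to show that every $n_\tau$ vanishes. Characters are additive on short exact sequences, so the hypothesis would descend to the virtual combination: $\sum_\tau n_\tau \mathsf{ch}(\Delta_c(\tau))$ has vanishing coefficient at $q^a$ for every $a$ of the form $c_\xi$.

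The structural input I would extract from (\ref{eq:Euler}) is the following. On the submodule $1\otimes\tau\subset \Delta_c(\tau)$, each $y_i$ acts by $0$, so $\sum_i x_iy_i$ contributes nothing and $h$ acts by the scalar $c_\tau$ (after absorbing the uniform shift by $n/2$, which is common to every $\tau$, into the indexing of eigenvalues). The $x_i$ then raise the $h$-weight by $1$, so every other generalized $h$-eigenvalue on $\Delta_c(\tau)$ lies in $c_\tau + \Z_{>0}$. In particular, the coefficient of $q^a$ in $\mathsf{ch}(\Delta_c(\tau))$ vanishes unless $a - c_\tau \in \Z_{\geq 0}$, and at $a = c_\tau$ it equals $[\tau]_W$.

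With this in hand, I would finish by a minimality argument. Assuming some $n_\tau$ is nonzero, I would pick $\tau_0$ with $n_{\tau_0}\neq 0$ such that there is no $\tau$ with $n_\tau\neq 0$ satisfying $c_{\tau_0} - c_\tau \in \Z_{>0}$; such a $\tau_0$ exists because only finitely many $n_\tau$ are nonzero. The coefficient of $q^{c_{\tau_0}}$ in $\sum_\tau n_\tau\mathsf{ch}(\Delta_c(\tau))$ then picks up only $\tau$ with $c_{\tau_0} - c_\tau \in \Z_{\geq 0}$, and by minimality only $\tau$ with $c_\tau = c_{\tau_0}$; the total contribution is $\sum_{\tau:\,c_\tau = c_{\tau_0}} n_\tau[\tau]_W$. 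By hypothesis this coefficient vanishes, and since the classes $[\tau]_W$ for $\tau\in\Irr(W)$ are linearly independent in $K_0(W\operatorname{-mod})$, this forces $n_\tau = 0$ for every such $\tau$, contradicting $n_{\tau_0}\neq 0$.

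The only step that requires real care is the identification of the lowest $h$-eigenspace of $\Delta_c(\tau)$ with $\tau$ itself, sitting at the common shift of $c_\tau$; this amounts to a direct calculation from (\ref{eq:Euler}) together with the fact that $y_i$ kills $1\otimes\tau$. Everything after is a standard triangularity argument in $K_0$ combined with the linear independence of irreducible $W$-characters.
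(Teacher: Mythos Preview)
Your argument is correct and is exactly the intended one; the paper itself omits the proof, merely flagging the lemma as ``easy.'' One small point worth making explicit: from (\ref{eq:Euler}) the lowest $h$-weight of $\Delta_c(\tau)$ is literally $c_\tau+\tfrac{n}{2}$, so the hypothesis should be read at those values---your parenthetical about absorbing the uniform $\tfrac{n}{2}$ shift handles this, and since the shift is the same for every $\tau$ the triangularity argument goes through unchanged.
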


Here and below we write $[M]$ for the class of $M$ in $K_0(\OCat_c(W))$.

\subsubsection{Twist by a character of $W$}
Now let $\chi$ be a one-dimensional character of $W$.
Given $c\in \param$, define $c^\chi\in \param$ by $c^\chi(s)=\chi(s)^{-1}c(s)$.
We have an isomorphism $\psi_\chi: H_c\xrightarrow{\sim}H_{c^\chi}$ given on
the generators by $x\mapsto x, y\mapsto y, w\mapsto \chi(w)w$. This gives rise
to an equivalence $\psi_{\chi*}:\OCat_c(W)\xrightarrow{\sim}\OCat_{c^\chi}(W)$
that maps $\Delta_c(\tau)$ to $\Delta_{c^\chi}(\chi\otimes \tau)$.

\subsubsection{Deformation}
To finish this section, let us note that one can also define the category $\OCat_{R,c}(W)$ for a commutative algebra $R$: it consists of all $H_{R,c}$-modules that are finitely generated over $R\otimes_{\C}S(\h^*)$ and
have a locally nilpotent action of $\h$. If $R=\C[\param']$ for an affine subspace $\param'\subset \param$,
then we write $\OCat_{\param'}(W)$ instead of $\OCat_{R,c}(W)$.

\subsection{Harish-Chandra bimodules}\label{SS_HC_bimod}
Let us introduce Harish-Chandra (shortly, HC) bimodules following
\cite{BEG}. We say that an $H_{c'}$-$H_c$-bimodule $M$ is HC
if it is finitely generated and the adjoint actions
of $S(\h)^W$ and $S(\h^*)^W$ (that are subalgebras in both
$H_c,H_{c'}$) are locally nilpotent. For example, the algebra
$H_c$ is a HC $H_c$-bimodule.

An equivalent definition is as follows: a bimodule $\B$
is HC if and only if it has a bimodule filtration
such that $\gr\B$ is a finitely generated bimodule
over $S(\h\oplus\h^*)^W$ and the left and the right actions
of $S(\h\oplus \h^*)^W$ on $\gr\B$ coincide
(such a filtration is called good). In other words,
$\gr\B$ is a finitely generated $S(\h\oplus \h^*)^W$-module.
The equivalence of these two definitions was established
in \cite[Section 5.4]{sraco}.

\subsubsection{Shift bimodules}\label{SSS_HC_shift}
A further example is provided by shift (a.k.a. translation) bimodules introduced in
complete generality in \cite[Section 5]{BC}. Let $\chi$ be a character of $W$.
Recall  $e_\chi\in \C W$ and $\bar{\chi}\in \param$ that have appeared
in \ref{SSS_spherical}.
We get an $H_{c+\bar{\chi}}$-$H_c$ bimodule $$\mathcal{B}_{c,\bar{\chi}}:=
H_{c+\bar{\chi}}e\otimes_{eH_{c+\bar{\chi}}e}e H_{c+\bar{\chi}} e_{\chi}\otimes_{eH_ce}eH_c.$$
Similarly, we get the $H_{c}$-$H_{c+\bar{\chi}}$ bimodule $\mathcal{B}_{c+\bar{\chi},-\bar{\chi}}$.
These bimodules are HC by \cite[Section 3.1]{rouq_der}.

\subsubsection{Tensor products}
The following result is obtained in
\cite[Section 3.4]{rouq_der}.

\begin{Prop}\label{Prop:tensor}
Let $\B_1$ be a HC $H_{c''}$-$H_{c'}$-bimodule, $\B_2$ be a HC $H_{c'}$-$H_c$-module,
and $M\in \OCat_c(W)$. Then the following is true.
\begin{enumerate}
\item $\Tor_i^{H_{c'}}(\B_1,\B_2)$ is a HC $H_{c''}$-$H_c$-bimodule for any $i$.
\item $\Tor_i^{H_{c}}(\B_2,M)\in \OCat_{c'}$ for any $i$.
\item If $M$ is projective, then $\Tor^{H_{c}}_i(\B_2,M)=0$ for any $i>0$.
\end{enumerate}
\end{Prop}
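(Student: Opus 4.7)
The plan is to use the good-filtration characterization of HC bimodules recalled in Section \ref{SS_HC_bimod}: $\B$ is HC iff it carries a bimodule filtration whose associated graded is finitely generated over $S(\h\oplus\h^*)^W$. Passage to the associated graded replaces both $H_{c'}$ and $H_c$ by the single algebra $S(\h\oplus\h^*)\#W$, opening up standard Noetherian arguments.

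For (1), I would fix good filtrations on $\B_1, \B_2$. The algebra $S(\h\oplus\h^*)\#W$ is the skew group ring of a regular Noetherian algebra by a finite group, so it has finite global dimension; this lifts to $H_{c'}$ by the standard filtered-to-graded comparison. Resolve $\B_2$ by a finite complex of filtered projective $H_{c'}$-bimodules, for example by bimodules of the form $H_{c'}\otimes U\otimes H_{c'}$ for finite-dimensional $W$-bimodules $U$, which carry evident good filtrations. Tensoring with $\B_1$ and taking cohomology yields filtrations on the Tor-modules whose associated gradeds are subquotients of $\Tor_i^{S(\h\oplus\h^*)\#W}(\gr\B_1,\gr\B_2)$, which are finitely generated over $S(\h\oplus\h^*)^W$ by Noetherianness; this gives the HC property. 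Part (2) is parallel: $M\in\OCat_c(W)$ carries a compatible filtration (from its finite generation over $S(\h^*)$ combined with the generalized eigenspace decomposition for the Euler element) whose associated graded is a finitely generated $S(\h^*)$-module on which $\h$ acts by $0$. Combining with a good filtration on $\B_2$ and a finite filtered resolution, one reads finite generation of $\Tor_i^{H_c}(\B_2,M)$ over $S(\h^*)$ and local nilpotence of $\h$ directly off the associated graded.

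For (3), reduce via the Verma filtration of projectives in $\OCat_c(W)$ to the case $M=\Delta_c(\tau)$. Since $H_c$ is free as a right $S(\h)\#W$-module by the triangular decomposition, change of rings gives $\Tor_i^{H_c}(\B_2,\Delta_c(\tau))=\Tor_i^{S(\h)\#W}(\B_2,\tau)$. Resolving $\tau$ by the Koszul complex $(S(\h)\#W)\otimes_{\C W}\wedge^\bullet\h\otimes\tau$ reduces the vanishing to acyclicity in positive degrees of the complex $\B_2\otimes_{\C W}\wedge^\bullet\h\otimes\tau$ with differential given by right multiplication by the $y_i\in\h$. This Koszul acyclicity is the genuine difficulty of the proposition: parts (1) and (2) are formal consequences of the good-filtration framework, whereas (3) requires exploiting the precise interaction of the HC structure on $\B_2$ with the right $S(\h)$-action. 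My plan is to combine part (2) -- which forces higher Tor's to lie in $\OCat_{c'}$, hence to have locally finite $h$-weights bounded below -- with a filtration and localization argument: on $\gr\B_2$, the sequence $y_1,\ldots,y_n$ is regular away from a proper closed subvariety of the support in $\h\oplus\h^*$, giving Koszul exactness generically, and the $\OCat_{c'}$ constraint then rules out any residual higher Tor supported on the singular locus.
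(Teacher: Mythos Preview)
The paper does not prove this proposition at all; it simply cites \cite[Section 3.4]{rouq_der}. So there is no ``paper's own proof'' to compare against, and your task is really to produce a self-contained argument.

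Your treatment of (1) and (2) is the standard good-filtration argument and is fine. The reduction in (3) to Verma modules via the standard filtration of projectives, followed by change of rings along the free extension $S(\h)\#W\hookrightarrow H_c$, is also correct and is the right opening move.

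The gap is in the last paragraph. What you call a ``plan'' is not an argument, and as written it does not work. You are conflating two unrelated notions of support. The ``singular locus'' where $y_1,\dots,y_n$ fails to be a regular sequence on $\gr\B_2$ lives in $(\h\oplus\h^*)/W$, whereas the $\OCat_{c'}$ constraint you invoke concerns the support of the Tor-module as a $\C[\h]$-module via the \emph{left} $S(\h^*)\subset H_{c'}$-action. There is no mechanism in your outline that converts ``the Koszul complex on $\gr\B_2$ is exact away from a closed subset of $\h\oplus\h^*$'' into ``the Tor-module, which lies in $\OCat_{c'}$, must vanish''. Objects of $\OCat_{c'}$ are perfectly happy to be nonzero and to have arbitrary closed support in $\h$; bounded-below $h$-weights do not force anything to be zero. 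So the final step is simply missing.

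What you actually need for (3) is a structural input about HC bimodules as \emph{right} $S(\h)$-modules (or right $S(\h)^W$-modules), not just the coarse fact that $\gr\B_2$ is finitely generated over $S(\h\oplus\h^*)^W$. One workable route is to show that a HC bimodule admits a finite filtration whose subquotients, as right $H_c$-modules, are of the form $V\otimes H_c$ for finite-dimensional $V$ (up to completion issues), or more directly that $\B_2$ is flat as a right $S(\h)^W$-module; either statement requires a genuine argument beyond the good-filtration bookkeeping you have set up. This is exactly the content that \cite[Section 3.4]{rouq_der} supplies, and you should either reproduce that argument or cite it rather than leaving a heuristic in its place.
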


So, for a HC $H_{c'}$-$H_c$-bimodule $\B$, we get a functor $\B\otimes^L_{H_c}\bullet:
D^b(\OCat_c(W))\rightarrow D^b(\OCat_{c'}(W))$.

\subsubsection{Deformations and supports in $\param$}\label{SSS_param_supp}
Let $\psi\in \param$. By a HC $(H_{\param},\psi)$-bimodule we mean a finitely
generated $H_{\param}$-bimodule $\B$ with locally nilpotent adjoint actions of
$S(\h)^W, S(\h^*)^W$ and such that $[z,b]=\langle z,\psi\rangle b$
for any $z\in \param^*, b\in \B$. Let $\HC(H_\param,\psi)$
denote the category of HC $(H_{\param},\psi)$-bimodules.

Note that $\B\otimes_{\C[\param]}\C_c$
is a HC $H_{c+\psi}$-$H_c$-bimodule. Conversely, any HC $H_{c'}$-$H_c$-bimodule
belongs to $\HC(H_{\param},c'-c)$. Similarly to \ref{SSS_HC_shift}, we have
bimodules   $\mathcal{B}_{\param,\bar{\chi}}\in \HC(H_{\param},\bar{\chi}),
\mathcal{B}_{\param+\bar{\chi},-\bar{\chi}}\in \HC(H_{\param},-\bar{\chi})$.

For $\B\in \HC(H_{\param},\psi)$ we define its {\it right $\param$-support}
$\Supp^r_{\param}(\B)$ as the set of all $c\in \param$ such that
$\B\otimes_{\C[\param]}\C_c\neq \{0\}$. Completely analogously to
\cite[Proposition 2.6]{CWR}, we see that $\Supp^r_{\param}(\B)$
is closed.

\subsubsection{HC bimodules over spherical subalgebras}
We can define the notion of a HC $A$-$A'$-bimodule,
where $A,A'$ are one of the algebras $H_{\param}, e_\chi H_{\param}e_\chi$
similarly to the above. Note that if $\B$ is a HC $(H_{\param},\psi)$-bimodule,
then $\B e_\chi$ is a HC $H_{\param-\psi}$-$e_\chi H_{\param} e_\chi$-bimodule.
As before, the tensor product of two HC bimodules is again a HC bimodule.

\subsection{Isomorphisms of completions}\label{SS_compl}
Here we are going to study completions of $H_\param$ and their connections
to the algebras $H_\param(\underline{W},\h)$, where $\underline{W}\subset W$
is a parabolic subgroup (the stabilizer of a point in $\h$).

We pick $b\in \h$. 
Set $H_\param^{\wedge_b}:=\C[\h/W]^{\wedge_b}\otimes_{\C[\h/W]}H_\param$.
This is a filtered algebra (with filtration inherited from $H_\param$)
containing $\C[\h/W]^{\wedge_b},H_\param$
as subalgebras. We can form the RCA $H_\param(\underline{W},\h):=
\C[\param]\otimes_{\C[\underline{\param}]}H_{\underline{\param}}(\underline{W},\h)$
for $\underline{W}$ acting on $\h$ (here $\underline{\param}$ is the parameter
space for $\underline{W}$). Form the completion $H_\param^{\wedge_b}(\underline{W},\h):=
\C[\h/\underline{W}]^{\wedge_b}\otimes_{\C[\h/\underline{W}]}H_\param(\underline{W},\h)$.

It turns out that there is an isomorphism of $H_\param^{\wedge_b}$
and the matrix algebra of size $|W/\underline{W}|$ over $H_\param^{\wedge_b}(\underline{W},\h)$,
\cite{BE}. We will need an invariant realization of the matrix algebra in terms of
centralizer algebras from \cite[Section 3.2]{BE}.

Let $A$ be an algebra equipped with a homomorphism from $\C\underline{W}$. Consider the space
$\operatorname{Fun}_{\underline{W}}(W,A)$ of all functions satisfying $f(uw)=uf(w)$ for all
$u\in \underline{W}, w\in W$. This is a free right $A$-module of rank $|W/\underline{W}|$.
Its endomorphism algebra, the centralizer algebra from \cite{BE}, will be denoted by $Z(W,\underline{W},A)$.  Note that $e Z(W,\underline{W},A)e=
\underline{e} A\underline{e}$, where we write $\underline{e}$ for the trivial idempotent in
$\C\underline{W}$. More generally, $e_\chi Z(W,\underline{W},A) e_\chi= \underline{e}_\chi A
\underline{e}_\chi$ for any one-dimensional character $\chi$ of $W$.

The algebras $\gr H_{c}^{\wedge_b}=\C[\h/W]^{\wedge_b}\otimes_{\C[\h/W]}\C[\h\oplus\h^*]\#W$
and $Z(W,\underline{W}, \gr H_{c}^{\wedge_b})$ are naturally isomorphic via
$\theta^0: \gr H_c^{\wedge_b}\xrightarrow{\sim} Z(W,\underline{W}, \gr H_{c}^{\wedge_b})$  given by the following formulas:
\begin{equation}
\begin{split}
&[\theta^0(F)f](w)=F f(w),\\
&[\theta^0(u)f](w)=f(wu),\\
&[\theta^0(v)f](w)=(wv)f(w),\\
&F\in \C[\h/W]^{\wedge_b}, u\in W, v\in \h\oplus \h^*.
\end{split}
\end{equation}
In particular, $\theta^0$ restricts to an isomorphism $\C[\h/W]^{\wedge_b}\xrightarrow{\sim}
\C[\h/\underline{W}]^{\wedge_b}$.

\begin{Prop}\label{Prop:compl_iso1}
The following is true.
\begin{enumerate}
\item
There is a $\C[\param]$-linear isomorphism $\theta_b: H_{\param}^{\wedge_b}\xrightarrow{\sim} Z(W,\underline{W}, H_{\param}^{\wedge_b}(\underline{W},\h))$
of filtered algebras that gives $\theta^0$ after passing to the associated graded and
specializing to $0\in \param$.
\item If $\theta'_b$ is another
isomorphism with these properties, then there is an invertible element
$F\in \C[\h/W]^{\wedge_b}\otimes (\C\oplus \param^*)$ such that $\theta_b':=\theta_b\circ
\exp(\operatorname{ad}F)$.
\end{enumerate}
\end{Prop}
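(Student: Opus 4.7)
The plan is to adapt the construction of Bezrukavnikov--Etingof in \cite[Section 3.2]{BE}, which produces such an isomorphism for a specialized parameter, to the universal family over $\C[\param]$. Both algebras are $\C[\param]$-flat and filtered: on the left by the PBW theorem for $H_\param$; on the right because $Z(W,\underline{W},A)$ is the endomorphism algebra of the rank-$|W/\underline{W}|$ free $A$-module $\operatorname{Fun}_{\underline{W}}(W,A)$, and $A = H_\param^{\wedge_b}(\underline{W},\h)$ is $\C[\param]$-flat by PBW for $H_\param(\underline{W},\h)$. After specializing to $0 \in \param$ and passing to associated gradeds, both sides become $\C[\h/W]^{\wedge_b}\otimes_{\C[\h/W]} \C[\h\oplus\h^*]\#W$ and are identified via $\theta^0$. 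So part (1) amounts to lifting $\theta^0$ to a filtered $\C[\param]$-linear isomorphism.

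For part (1), I would specify $\theta_b$ on generators and verify the Cherednik relations over $\param^*$. The generating subalgebras of $H_\param^{\wedge_b}$ are $\C[\h/W]^{\wedge_b}$ (on which $\theta_b$ agrees with $\theta^0$), $\C W$, $\h^*$, $\h$, and $\param^*$. The essential point is that for a reflection $s \in S$ whose hyperplane avoids $b$, equivalently $s \notin \underline{W}$, the root $\alpha_s$ is invertible in $\C[\h]^{\wedge_b}$. Therefore in the universal commutator
\[
[y,x] = \langle y,x\rangle - \sum_{s\in S} \mathbf{c}_s \langle x,\alpha_s^\vee\rangle\langle y,\alpha_s\rangle\, s,
\]
the summands with $s \notin \underline{W}$ become inner derivations coming from elements of $\C[\h/W]^{\wedge_b}\#W$ and can be absorbed into the centralizer structure; what remains is the sum over $s \in S \cap \underline{W}$, which is precisely the commutator inside $H_\param^{\wedge_b}(\underline{W},\h)$. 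Once all relations are verified, the resulting $\C[\param]$-linear filtered homomorphism is an isomorphism by $\C[\param]$-flatness and coincidence of associated gradeds.

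For part (2), set $\varphi := \theta_b' \circ \theta_b^{-1}$; this is a $\C[\param]$-linear filtered automorphism of the target algebra that reduces to the identity on the associated graded at $0 \in \param$. Such automorphisms form a pro-unipotent group, so $\varphi = \exp(\delta)$ for a continuous $\C[\param]$-linear derivation $\delta$ of non-positive filtration degree. The class of $\delta$ lies in a piece of continuous Hochschild cohomology $HH^1$ of the completed associated graded algebra; near $b$ the $W$-action factors through $\underline{W}$, and a standard computation (say via a Koszul resolution of $\C[\h\oplus\h^*]\#\underline{W}$) identifies the relevant piece with inner derivations. The filtered and $\C[\param]$-linear constraints restrict the generating elements $F$ to $\C[\h/W]^{\wedge_b}\otimes(\C\oplus \param^*)$, where the $\C$ summand corresponds to the Euler shift and $\param^*$ to the deformation direction. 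Exponentiating yields $\theta_b' = \theta_b \circ \exp(\operatorname{ad}F)$.

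The main obstacle will be the Hochschild-theoretic control in part (2); the existence in (1) is essentially BE's construction carried out $\C[\param]$-linearly, for which the only additional verification is that their relation checks go through with universal parameters $\mathbf{c}_s \in \param^*$ in place of scalars, which is formal.
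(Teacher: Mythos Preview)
Your proposal is essentially correct and follows the same route as the paper, whose proof consists only of two citations: part (1) is exactly the Bezrukavnikov--Etingof construction in \cite[Section 3.3]{BE} carried out over $\C[\param]$, and part (2) is the argument of \cite[Lemma 5.2.1]{sraco}. Your sketch of (1) accurately reproduces the BE mechanism (invertibility of $\alpha_s$ for $s\notin\underline{W}$ in the completion, then checking relations with universal parameters).

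One point in (2) deserves more care than your outline gives it. Writing $\varphi=\theta_b'\circ\theta_b^{-1}=\exp(\delta)$ is fine, but the hypothesis is only that $\gr\varphi$ is the identity \emph{after specializing to $0\in\param$}, not that $\gr\varphi=\operatorname{id}$ outright; with $\deg\param^*=1$, the associated graded can still see $\param^*$-linear corrections. So the pro-unipotence you need is with respect to the combined filtration by PBW degree and $\param^*$-adic order, and the inductive lifting has to keep track of both. This is exactly why $F$ lands in $\C[\h/W]^{\wedge_b}\otimes(\C\oplus\param^*)$ rather than in all of $\C[\h/W]^{\wedge_b}\otimes\C[\param]$: the degree bookkeeping forces the $\param^*$-part of $F$ to be at most linear. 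Your appeal to a Koszul computation of $HH^1$ for $\C[\h\oplus\h^*]\#\underline{W}$ is the right ingredient, but the actual argument in \cite{sraco} proceeds by an explicit step-by-step correction (first kill the degree-$(-1)$ part of $\delta$ by $\operatorname{ad}F_0$ with $F_0\in\C[\h/W]^{\wedge_b}$, then the $\param^*$-linear degree-$0$ part by $\operatorname{ad}(\sum_s c_s F_s)$), rather than a single cohomological vanishing statement. If you fill in that two-step correction, your argument is complete and coincides with the cited one.
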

\begin{proof}
The isomorphism $\theta_b$ is constructed explicitly in \cite[Section 3.3]{BE}. The existence of $F$ is
proved in the same way as in  \cite[Lemma 5.2.1]{sraco}.
\end{proof}

The isomorphism $\theta_b$ restricts to $e_\chi H_{\param}^{\wedge_b} e_\chi
\xrightarrow{\sim} \underline{e}_\chi H_{\param}^{\wedge_b}(\underline{W},\h)\underline{e}_\chi$
and a straightforward analog of (2) holds. We note that
$e_\chi H_{\param}^{\wedge_b} e_\chi=\C[\h/W]^{\wedge_b}\otimes_{\C[\h/W]}e_\chi H_{\param}e_\chi$.



\subsection{Induction and restriction functors}\label{SS_IndRes}
An isomorphism of completions in Section \ref{SS_compl} allows one to define
restriction functors $\Res^W_{\underline{W}}: \OCat_c(W)\rightarrow \OCat_c(\underline{W}),
\bullet_{\dagger}: \HC(H_{\param},\psi)\rightarrow \HC(H_{\param}(\underline{W}),\psi)$
and the induction functor $\Ind^W_{\underline{W}}:\OCat_c(\underline{W})\rightarrow
\OCat_c(W)$. The functors for the categories $\mathcal{O}$ were introduced
in \cite[Section 3.5]{BE}, the case of HC bimodules was treated in \cite[Section 3]{sraco}.
We recall the constructions below in this section.

\subsubsection{Functors for categories $\mathcal{O}$: construction}\label{SSS_ResO_constr}
We start by explaining how to construct the functors $\Res^W_{\underline{W}},\Ind^W_{\underline{W}}$.

Let $\OCat^{\wedge_b}_c$ denote the category of all $H_c^{\wedge_b}$-modules that are finitely
generated over $\C[\h/W]^{\wedge_b}$. This category is equivalent to $\OCat_c(\underline{W})$.
The equivalence is established as a composition of several intermediate equivalences.
First, note that $\OCat_c(\underline{W})$ is equivalent to $\OCat_c^{\wedge_b}(\underline{W},\h)$
via $N\mapsto \C[\h/\underline{W}]^{\wedge_b}\otimes_{\C[\underline{\h}/\underline{W}]} N, N\in \OCat_c(\underline{W})$.
Next, $\OCat^{\wedge_b}_c$ is equivalent to $\OCat_c^{\wedge_b}(\underline{W},\h)$
via $M'\mapsto e(\underline{W})\theta_{b*}(M')$, where $e(\underline{W})$ is the idempotent
in $Z(W,\underline{W}, H_c^{\wedge_b}(\underline{W},\h))$ given by $[e(\underline{W})f](w)=f(w)$
if $w\in \underline{W}$ and $[e(\underline{W})f](w)=0$, else. Let $\mathcal{F}$
denote the resulting equivalence $\OCat_c^{\wedge_b}\xrightarrow{\sim}\OCat_c(\underline{W})$.

We have an exact functor $\OCat_c\rightarrow \OCat_c^{\wedge_b}$ given by $M\mapsto M^{\wedge_b}:=
\C[\h/W]^{\wedge_b}\otimes_{\C[\h/W]}M$. We set $\Res_{\underline{W}}^W:=\mathcal{F}(\bullet^{\wedge_b})$,
this functor is independent of $b$ up to an isomorphism, see \cite[Section 3.7]{BE}.
It was shown in \cite[Section 3.5]{BE} that it admits
an exact right adjoint functor, the induction functor $\Ind_{\underline{W}}^W:\OCat_c(\underline{W})
\rightarrow \OCat_c(W)$.

Note that the functors $\Res^W_{\underline{W}}, \Ind_{\underline{W}}^W$ do not depend
(up to an isomorphism) on the choice of $\theta_b$, this follows from (2) of Proposition \ref{Prop:compl_iso1}.

\subsubsection{Functors for categories $\mathcal{O}$: properties}
The functor $\Ind_{\underline{W}}^W$ is also left adjoint to $\Res^W_{\underline{W}}$, see \cite[Section 2.4]{Shan}
for the proof of this under some restrictions on $W$ and  \cite{fun_iso} in general.
On the level of $K_0$'s, these functors behave like the restriction
and the induction for groups, \cite[Section 3.6]{BE}.

The functors $\Ind$ and $\Res$ are compatible with the supports as follows.
Let $L\in \OCat_c(W)$ be a simple module and $\Supp L=X(W_1)$.
Then $\Supp \Res^W_{\underline{W}} L=\bigcup_{W_1'}\underline{X}(W_1')$. Here
$W_1'$ runs over all parabolic subgroups of $\underline{W}$
that are conjugate to $W_1$ in $W$. We write $\underline{X}(W_1')$
for $\underline{W} \underline{\h}^{W_1'}$, where $\underline{\h}:=
\h/\h^{\underline{W}}$.

\begin{Rem}\label{Rem:characters}
If one knows the classes of $L_c(\tau)$ in $K_0(\OCat_c(W))=K_0(W\operatorname{-mod})$,
then, in principle,  one can use the properties above to compute the support of $L_c(\tau)$.
For example, if $W=G(\ell,1,n)$, then the classes were computed in
\cite{RSVV,VV_proof,Webster_new}. However, the formulas are very involved
and so computing the support in this way is  much  more
complicated than what is proposed in the present paper.
\end{Rem}

\begin{Lem}\label{Lem:Ind_support}
Let $\underline{L}$ be a simple module in the category $\OCat_c(\underline{W})$
with $\Supp \underline{L}=\underline{X}(\underline{W}_1)$. Then $\Ind_{\underline{W}}^W(\underline{L})\neq 0$ and
the supports of any  submodule and any quotient module of  $\Ind_{\underline{W}}^W(\underline{L})$  are equal to $X(\underline{W}_1)$.
\end{Lem}
\begin{proof}
By \cite[Proposition 2.7]{SV}, $\Ind_{\underline{W}}^W(\underline{L})\neq 0$
and the support of $\Ind_{\underline{W}}^W(\underline{L})$ equals $X(\underline{W}_1)$.
Now let $M$ be a quotient of $\Ind_{\underline{W}}^W(\underline{L})$ whose support
is a proper subvariety  of $X(\underline{W}_1)$. Since $\Res$ is right adjoint to $\Ind$,
we get a nonzero homomorphism $\underline{L}\rightarrow  \Res_{\underline{W}}^W(M)$
and so $\underline{X}(\underline{W}_1)\subsetneq \operatorname{Supp}(\Res_{\underline{W}}^W(M))$.
This contradicts the description of the support of restriction given above.
To establish the claim for subs, we argue similarly using the fact
that $\Res$ is left adjoint to $\Ind$.
\end{proof}

\subsubsection{Functors for HC bimodules: construction}
Let us recall restriction functors for HC bimodules. According to \cite[Section 3.6]{sraco}, there is an exact
$\C[\param]$-linear functor $\bullet_{\dagger,\underline{W}}:
\HC(H_{\param}(W),\psi)\rightarrow \HC(H_{\param}(\underline{W}),\psi)$.

The functor  $\bullet_{\dagger,\underline{W}}$ is constructed as follows.
By a HC $H^{\wedge_b}_{c'}$-$H^{\wedge_b}_{c}$-bimodule we mean
a bimodule $\B'$ that is equipped with a filtration such that
$\gr\B'$ is a finitely generated $\C[\h/W]^{\wedge_b}\otimes_{\C[\h/W]}S(\h\oplus \h^*)^W$-module.
More generally, a HC $(H^{\wedge_b}_{\param},\psi)$-bimodule is
a bimodule $\B$ (with the same compatibility of left and right $\param^*$-actions
as before) that can be equipped with a bimodule filtration such that
$\gr\B$ is a finitely generated  $\C[\h/W]^{\wedge_b}\otimes_{\C[\h/W]}Z_{\param}$-module,
where we write $Z_{\param}$ for the center of $\gr H_{\param}$. Recall that
it follows from \cite[Theorem 3.1]{EG}, that $Z_{\param}$ is a graded free deformation of $S(\h\oplus \h^*)^W$.
So any HC $H^{\wedge_b}_{c'}$-$H^{\wedge_b}_{c}$-bimodule is also an HC
$(H^{\wedge_b}_{\param},\psi)$-bimodule.

We have an exact completion functor $\B\mapsto \B^{\wedge_b}:=\C[\h/W]^{\wedge_b}\otimes_{\C[\h/W]}\B$
from $\HC(H_{\param},\psi)$ to $\HC(H^{\wedge_b}_{\param},\psi)$. The categories
$\HC(H^{\wedge_b}_{\param},\psi)$ and $\HC(H^{\wedge_b}_{\param}(\underline{W},\h),\psi)$
are equivalent, this is analogous to an equivalence between the categories $\OCat$
recalled in \ref{SSS_ResO_constr}.

The categories $\HC(H^{\wedge_b}_{\param}(\underline{W},\h),\psi)$ and $\HC(H_{\param}(\underline{W},\underline{\h}),\psi)$
are equivalent as well. An equivalence is constructed as follows:
to $\underline{\B}\in \HC(H_{\param}(\underline{W},\underline{\h}),\psi)$ we assign
$$H_{\param}^{\wedge_b}(\underline{W},\h)\otimes_{H_{\param}(\underline{W},\underline{\h})}\underline{\B}.$$
A quasi-inverse equivalence sends $\B'$ to the $\underline{h}$-finite part of
the centralizer of $D(\h^{\underline{W}})\subset H_{\param}(\underline{W},\h)$
in $\B'$. Here $\underline{h}$ is the Euler element in $H_c(\underline{W})$.
The claim that these two functors are quasi-inverse equivalences
can be easily deduced from \cite[Section 5.5]{sraco}.

Let $\mathcal{F}$ denote the resulting equivalence $\HC(H^{\wedge_b}_{\param},\psi)\xrightarrow{\sim}
\HC(H_\param(\underline{W},\underline{\h}),\psi)$. We set $\B_{\dagger,\underline{W}}:=\mathcal{F}(\B^{\wedge_b})$.

The similar construction works for HC $H_{\param}$-$e_\chi H_{\param} e_\chi$-bimodules,
etc. Note that $(\B e_\chi)_{\dagger,\underline{W}}$ is naturally isomorphic to
$(\B_{\dagger,\underline{W}})\underline{e}_\chi$ (and the same is true for the multiplications
by $e_\chi$ from the left).

\subsubsection{Functors for HC bimodules: properties}\label{SSS_res_HC_prop}
The functor $\bullet_{\dagger,\underline{W}}$ has the following properties established
in \cite{sraco,rouq_der}.

1) The functor $\bullet_{\dagger,\underline{W}}$ intertwines $\Tor_i$, see
\cite[Lemma 3.11]{rouq_der}.

2) We have a bifunctorial isomorphism $\Res^W_{\underline{W}}(\B\otimes^L_{H_c}M)
\cong \B_{\dagger,\underline{W}}\otimes^L_{H_c(\underline{W})}\Res^W_{\underline{W}}M$,
see \cite[Section 5.5]{sraco}.

Let us recall the notion of the associated variety of a HC bimodule.
Equip $\B\in \HC(H_{\param},\psi)$
with a  good filtration.  Then for the associated variety, $\VA(\B)$,
we take the support of the $S(\h\oplus \h^*)^W$-module $\gr\B/ \param^* \gr\B$.

Now let us explain how  the restriction functors behaves on the associated varieties.

3) The associated variety of $\B_{\dagger,\underline{W}}$ can be recovered from that of
$\B$ as follows. Pick a point $v\in \h\oplus \h^*$ with stabilizer $\underline{W}$.
The formal neighborhood of $Wv$ in $(\h\oplus\h^*)/W$ is naturally identified with
the formal neighborhood of $0$ in $(\h\oplus
\h^*)/\underline{W}$. Then $\VA(\B_{\dagger,\underline{W}})$
is uniquely recovered from $$\VA(\B)^{\wedge_{Wv}}=\left((\h\oplus \h^*)^{\underline{W}}\times
\VA(\B_{\dagger,\underline{W}})\right)^{\wedge_0}.$$
This is easily seen from the construction.

\subsection{Chambers and walls}\label{SS_chamber}
We consider the $\Z$-lattice and the $\Q$-lattice $\param^*_\Z\subset \param^*_{\Q}\subset \param^*$
spanned by the elements $h_{H,i}-h_{H,j}$ and the dual lattices $\param_{\Z}\subset \param_{\Q}\subset \param$.
The lattice $\param_{\Z}$ is spanned by the elements $\bar{\chi}$.
We will need a certain sublattice  in  $\param_{\Z}$.  In \cite[Section 7.2]{BC},
Berest and Chalykh established a group homomorphism $\mathsf{tw}:\param_\Z\rightarrow
\operatorname{Bij}(\Irr W)$ called the {\it KZ twist}. Set $\underline{\param}_{\Z}:=\ker \mathsf{tw}$.

As we have seen in \cite[Lemma 2.6]{rouq_der}, the function $c\mapsto c_\tau$ is in $\param^*_{\Q}$.
Define an equivalence relation $\sim$ on $\Irr(W)$ by setting
$\tau\sim \tau'$ if $c_{\tau}=c_{\tau'}$ for every parameter $c$.  Now if $\tau\not\sim \xi$,
then we have the hyperplane $\Pi_{\tau,\xi}$ in $\param$ given by $c_{\tau}=c_{\xi}$.
All the hyperplanes $\Pi_{\tau,\xi}$  are rational.

Fix a coset $c+\underline{\param}_{\Z}$ and consider $c'$ in this coset.
We write $c\prec c'$
if $\tau\leqslant_c \xi$ implies $\tau\leqslant_{c'}\xi$. We write $c\sim c'$ if $c\prec c'$
and $c'\prec c$. The equivalence classes are relative interiors in the cones defined by the hyperplane
arrangement $\{\Pi_{\tau,\xi}\mid \tau\not\sim \xi, c_\tau-c_\xi\in \Q\}$
on $c+\underline{\param}_{\Z}$.


We are mostly interested in the
open cones. In what follows,  the open cones in this stratification will be called {\it open chambers}.
We often view them as subcones in $c+\param_{\Q}$.
For each open chamber we have its opposite chamber, where the order is opposite. Then we can talk about {\it faces} of the closures of the chambers. These are open cones in affine subspaces of $c+\param_{\Q}$.
We note that faces do not need to intersect $c+\underline{\param}_{\Z}$.

In what follows we will need the following definition.

\begin{defi}\label{defi:Weil_generic}
Let $X$ be an irreducible algebraic variety over $\C$. By a {\it Weil generic} point of
$X$ we mean a point lying outside a countable union of proper closed subvarieties.
\end{defi}

Note that if $c$ is Weil generic in $\param$,
we have just one open chamber, while for a Weil generic $c$ on a rational hyperplane parallel to $\Pi_{\tau,\xi}$
we have exactly two  open cones that are opposite to each other.

Here is an important result, \cite[Proposition 4.2]{rouq_der}, on a category equivalence.

\begin{Prop}\label{Prop:cat_equi}
Let $c,c'$ be such that $c'-c\in \underline{\param}_{\Z}$. Suppose $c\prec c'$.
Then there is a category equivalence $\OCat_c\xrightarrow{\sim}\OCat_{c'}$
mapping $\Delta_c(\tau)$ to $\Delta_{c'}(\tau)$ and preserving the supports
of the simple modules.
\end{Prop}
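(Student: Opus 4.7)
The plan is to realize the desired equivalence as derived tensor product with a Harish-Chandra shift bimodule, and then upgrade it to an abelian equivalence using the ordering hypothesis $c\prec c'$. Write $\psi:=c'-c\in\underline{\param}_\Z$. After factoring $\psi$ as a sum of elements $\bar\chi$ for one-dimensional characters $\chi$ of $W$ and composing the corresponding shifts, I reduce to the case $\psi=\bar\chi$, and set $\mathcal{F}:=\B_{c,\bar\chi}\otimes^L_{H_c}\bullet\colon D^b(\OCat_c)\to D^b(\OCat_{c'})$, well-defined by Proposition \ref{Prop:tensor}. The putative quasi-inverse is $\mathcal{G}:=\B_{c+\bar\chi,-\bar\chi}\otimes^L_{H_{c'}}\bullet$.

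Next I would compute $\mathcal{F}$ on standards at the level of $K_0$. The shift bimodule is built from the spherical isomorphism $\iota$ of Lemma \ref{Lem:spher_iso} and becomes Morita-trivial after localizing away from the reflection hyperplanes. Its induced action on $K_0$ is thus the KZ twist $\mathsf{tw}(\bar\chi)\in\operatorname{Bij}(\Irr W)$, which is the identity because $\bar\chi\in\underline{\param}_\Z=\ker\mathsf{tw}$. A character computation (the Euler element (\ref{eq:Euler}) at $c$ and at $c'$ differ by exactly the scalar that aligns the two $c$-functions) combined with Lemma \ref{Lem:K0_class_recover} then gives $[\mathcal{F}(\Delta_c(\tau))]=[\Delta_{c'}(\tau)]$ in $K_0(\OCat_{c'})$.

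The main obstacle is $t$-exactness: I claim $\mathcal{F}(\Delta_c(\tau))\cong\Delta_{c'}(\tau)$ in $\OCat_{c'}$, concentrated in degree $0$. I would prove this by descending induction on $\tau$ along $\leqslant_c$. For $\tau$ maximal, $\Delta_c(\tau)=P_c(\tau)$ is projective, so Proposition \ref{Prop:tensor}(3) places $\mathcal{F}(\Delta_c(\tau))$ in $\OCat_{c'}$, and the $K_0$-class together with the existence of a surjection onto $L_{c'}(\tau)$ identifies this module with $\Delta_{c'}(\tau)$. For general $\tau$, use the canonical presentation $0\to K\to P_c(\tau)\to\Delta_c(\tau)\to 0$ with $K$ filtered by $\Delta_c(\tau')$, $\tau'>_c\tau$. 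By the inductive hypothesis $\mathcal{F}(K)$ sits in degree $0$ and is filtered by $\Delta_{c'}(\tau')$. Here the hypothesis $c\prec c'$ is essential: each such $\tau'$ satisfies $\tau'>_{c'}\tau$, so the filtration on $\mathcal{F}(K)$ is compatible with $\leqslant_{c'}$ strictly above $\tau$. A standard homological argument in the highest weight category $\OCat_{c'}$, using that no $\Delta_{c'}(\tau')$ with $\tau'>_{c'}\tau$ has a nonzero map to $\Delta_{c'}(\tau)$, shows that the distinguished triangle collapses to a short exact sequence, identifying $\mathcal{F}(\Delta_c(\tau))$ with a standardly filtered module whose $K_0$-class matches and whose top is $L_{c'}(\tau)$; hence $\mathcal{F}(\Delta_c(\tau))\cong\Delta_{c'}(\tau)$.

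Finally, the adjunction counit $\mathcal{G}\mathcal{F}\to\mathrm{id}$ is an isomorphism on $D^b(\OCat_c)$ because both $\mathcal{F}$ and (by a symmetric argument) $\mathcal{G}$ send standards to standards with matching labels, and standards generate the derived category. This yields the equivalence $\OCat_c\xrightarrow{\sim}\OCat_{c'}$ sending $\Delta_c(\tau)$ to $\Delta_{c'}(\tau)$ and hence $L_c(\tau)$ to $L_{c'}(\tau)$. For support preservation, I would use that on the formal neighborhood of any $b\in\h$ the shift bimodule becomes Morita-trivial via Proposition \ref{Prop:compl_iso1} (the spherical isomorphism is compatible with completions), so $\mathcal{F}$ commutes with restriction functors $\Res^W_{\underline{W}}$; the characterization of $\Supp$ via restrictions recalled in Section \ref{SS_IndRes} then forces $\Supp L_c(\tau)=\Supp L_{c'}(\tau)$.
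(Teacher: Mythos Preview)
The paper does not prove this proposition but cites \cite[Proposition 4.2]{rouq_der}. The proof there proceeds by first establishing that the shift functor is a \emph{derived} equivalence via a deformation argument: over a suitable affine subspace $\param'\ni c$, the shift bimodule $\B_{\param',\psi}$ specializes at a Weil generic $\hat c$ to a Morita bimodule (because the spherical subalgebras involved are Morita equivalent to $H_{\hat c}$), and this forces the derived functor at $c$ to be an equivalence. Only then does one upgrade to an abelian highest-weight equivalence, using $c\prec c'$ together with a general lemma (compare \cite[Lemma 4.3.2]{GL}) on derived equivalences between highest weight categories that preserve the classes of standards.

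Your argument has a genuine gap precisely at this step: you never establish that $\mathcal{F}$ is a derived equivalence. The proposed route via ``a symmetric argument for $\mathcal{G}$'' fails because the hypothesis $c\prec c'$ is asymmetric --- your induction for $\mathcal{F}$ uses that $\tau'>_c\tau$ implies $\tau'>_{c'}\tau$, whereas the analogous step for $\mathcal{G}\colon D^b(\OCat_{c'})\to D^b(\OCat_c)$ would need $\tau'>_{c'}\tau\Rightarrow\tau'>_c\tau$, which is exactly what $c\prec c'$ does \emph{not} provide. The reduction to $\psi=\bar\chi$ is likewise unjustified: composing single-step equivalences would require the ordering hypothesis at each intermediate parameter $c+\epsilon_1\bar\chi_1+\cdots+\epsilon_k\bar\chi_k$, which you do not have. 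Finally, the induction itself is incomplete even for $\mathcal{F}$: in the base case, knowing only that $\mathcal{F}(\Delta_c(\tau))$ lies in $\OCat_{c'}$ with $K_0$-class $[\Delta_{c'}(\tau)]$ does not determine the module (it could, for instance, be the direct sum of the composition factors of $\Delta_{c'}(\tau)$); and in the inductive step the triangle collapses only if $\mathcal{F}(K)\to\mathcal{F}(P_c(\tau))$ is injective, but you know nothing about $\mathcal{F}(P_c(\tau))$ beyond its class, so no ``no map from $\Delta_{c'}(\tau')$ to $\Delta_{c'}(\tau)$'' argument applies to it. All of these difficulties dissolve once the derived equivalence is known, which is why the deformation step is the essential missing ingredient.
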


So, basically, we do not need to distinguish between parameters lying in the
same open chamber of $c+\underline{\param}_{\Z}$.

\subsection{Wall-crossing functors}\label{SS_WC}
In this section we recall wall-crossing functors introduced in \cite{rouq_der}.

\subsubsection{Construction}\label{SSS_WC_constr}
Let us construct a wall-crossing bimodule $\B_c(\psi)$ that is a
HC $H_{c-\psi}$-$H_c$-bimodule. We assume that $c$
lies in the interior of an open chamber $C$ of $c+\underline{\param}_{\Z}$.
Pick a face $F$ of (the closure of) $C$ and
choose $\psi\in \underline{\param}_{\Z}$ such that $c-\psi$ lies  in the
chamber $C^-$ that is opposite to $C$ with respect to $F$. This means that
$F$ is a face for both $C,C^-$ and there is an interval with end points
in $C,C^-$ and a mid-point in $F$. Let $F^0$ denote the associated
cone of $F$ in $\param_{\Q}$, it is defined by the linear parts of
the inequalities defining $F$.

Thanks to Proposition
\ref{Prop:cat_equi}, we may replace $c$ with a Zariski generic $c'\in \param$
with $c'-c\in F^0\cap \underline{\param}_{\Z}$ without changing the category $\mathcal{O}$.
Note that, by the construction, $F^0$ is rational, so
$F^0\cap \underline{\param}_{\Z}$ is non-empty.

Let $\param'$ denote
the affine subspace of $\param$ containing $c$, whose associated vector
space is spanned by $F^0$.

Consider a sequence of characters $\chi_1,\ldots,\chi_k$ with
$-\psi=\sum_{i=1}^k \epsilon_i \bar{\chi}_i$, where $\epsilon_i\in \{\pm 1\}$.
Then set $\param'_j:=\param'+\epsilon_1\bar{\chi}_1+\ldots+\epsilon_j \bar{\chi}_j$ and
$$\B_{\param',\psi}:=\B_{\param'_{k-1},\epsilon_k \bar{\chi}_k}\otimes_{H_{\param'_{k-1}}} \ldots\otimes_{H_{\param'_2}}\B_{\param'_1,\epsilon_2 \bar{\chi}_2}\otimes_{H_{\param'_1}}\B_{\param'_0,\epsilon_1 \bar{\chi}_1}.$$
This is an object in $\HC(H_{\param'},-\psi)$.

The construction in \cite[Section 5.2]{rouq_der} yields two sub-bimodules $\{0\}\subset
\B^1\subset \B^2\subset \B_{\param',\psi}$
with the property that, for a Weil generic $\hat{c}\in \param'$, the bimodule
$\B^2_{\hat{c}}/\B^1_{\hat{c}}$ is simple with full associated variety,
while $\B^1_{\hat{c}},\B_{\hat{c},\psi}/\B^2_{\hat{c}}$ have proper associated varieties.
We set $\B_{\param'}(\psi):=\B^2/\B^1$.

\begin{Lem}\label{Lem:wc_well_def}
The specialization $\B_c(\psi)$ is well-defined for a Zariski generic $c\in \param'$
meaning that for any other choice of sub-bimodules $\B'^1\subset \B'^2$ (satisfying
the assumptions before the lemma) we have
$\B^2_c/\B^1_c\cong \B'^2_c/\B'^1_c$ for a Zariski generic $c\in \param'$.
\end{Lem}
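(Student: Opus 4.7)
The plan is to characterize $\B^2/\B^1$ intrinsically at the generic point of $\param'$, and then descend from there to Zariski generic $c \in \param'$.

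Step 1 (unique full-associated-variety simple at Weil generic parameters; the main obstacle). I would first establish that for Weil generic $\hat c \in \param'$, the bimodule $\B_{\hat c,\psi}$ has, up to isomorphism, a unique simple subquotient with full associated variety, with $\End$-ring equal to $\C$. The idea is that each factor $\B_{\param'_{j-1}, \epsilon_j \bar{\chi}_j}$ of the iterated tensor product defining $\B_{\param',\psi}$ is, at Weil generic parameters, a Morita equivalence bimodule: the spherical side of this is Lemma \ref{Lem:spher_iso}, and the lift to the full Cherednik algebra uses the Morita equivalence $H_{\hat c} \simeq e H_{\hat c} e$ valid at Weil generic $\hat c$. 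Hence $\B_{\hat c,\psi}$ is itself a Morita bimodule at Weil generic $\hat c$, with a canonical ``diagonal'' simple subquotient of full associated variety. This step is the main obstacle, because one must carefully track how the spherical Morita equivalences compose at the non-spherical level and isolate the unique full-associated-variety subquotient inside the tensor product.

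Step 2 (agreement at the generic point of $\param'$). The defining properties of $(\B^1,\B^2)$ and of any other valid pair $(\tilde{\B}^1, \tilde{\B}^2)$ force both $\B^2_{\hat c}/\B^1_{\hat c}$ and $\tilde{\B}^2_{\hat c}/\tilde{\B}^1_{\hat c}$ to be isomorphic to the canonical simple constructed in Step 1 at Weil generic $\hat c$. Equivalently, if $K$ denotes the fraction field of $\C[\param']$, one has $(\B^2/\B^1) \otimes_{\C[\param']} K \cong (\tilde{\B}^2/\tilde{\B}^1) \otimes_{\C[\param']} K$ as bimodules over the $K$-forms of the relevant Cherednik algebras; I fix such an isomorphism $\phi_\eta$.

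Step 3 (descent from the generic point to Zariski generic $c$). Both $\B^2/\B^1$ and $\tilde{\B}^2/\tilde{\B}^1$ are finitely generated over $\C[\param']$, being HC bimodules (cf.~Section \ref{SSS_param_supp}). Clearing denominators extends $\phi_\eta$ to a bimodule map $\phi: (\B^2/\B^1)[f^{-1}] \to (\tilde{\B}^2/\tilde{\B}^1)[f^{-1}]$ over $\C[\param'][f^{-1}]$ for some nonzero $f \in \C[\param']$. The kernel and cokernel of $\phi$ vanish after tensoring with $K$, hence are $\C[\param']$-torsion and supported in a proper closed subscheme of $\operatorname{Spec}\C[\param'][f^{-1}]$; enlarging $f$ to absorb this support, $\phi$ becomes an isomorphism on the Zariski open set $\{f \ne 0\} \subset \param'$. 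Its specialization at any $c$ with $f(c) \ne 0$ then yields the required isomorphism $\B^2_c/\B^1_c \cong \tilde{\B}^2_c/\tilde{\B}^1_c$, establishing that $\B_c(\psi)$ is well-defined for Zariski generic $c$.
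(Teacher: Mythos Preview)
Your approach is correct but genuinely different from the paper's. You characterize $\B^2/\B^1$ intrinsically at the generic point (via uniqueness of the full-associated-variety simple subquotient) and then spread out. The paper instead avoids your Step~1 entirely: given two admissible pairs $(\B^1,\B^2)$ and $(\B'^1,\B'^2)$, it forms $\tilde{\B}^1:=\B^1+\B'^1$, $\tilde{\B}^2:=\B^2+\B'^1$, $\tilde{\B}'^2:=\B'^2+\B^1$, and observes that the natural surjections $\B^2/\B^1\twoheadrightarrow \tilde{\B}^2/\tilde{\B}^1$ and $\B'^2/\B'^1\twoheadrightarrow \tilde{\B}'^2/\tilde{\B}'^1$ are isomorphisms at Weil generic $\hat c$ simply because the source is simple there; closedness of $\param$-supports (\ref{SSS_param_supp}) then makes them isomorphisms at Zariski generic $c$. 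This reduces to the case $\B^1=\B'^1$, and then one intersects $\B^2,\B'^2$. So the paper never needs to know that $\B_{\hat c,\psi}$ is a Morita bimodule or that the simple subquotient is unique; it only uses the simplicity hypothesis already built into the definition. Your route buys a cleaner conceptual picture (an intrinsic description of $\B_c(\psi)$), at the cost of the extra input you flagged as the main obstacle.

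One small point: in Step~2 you write ``Equivalently'' when passing from Weil generic closed points to the fraction field $K$. These are not literally equivalent; you need to rerun the argument of Step~1 over $K$ (or note that simplicity of the algebras and hence the Morita property spreads to the generic point). This is routine, but worth stating rather than asserting equivalence.
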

\begin{proof}
Set $\tilde{\B}^1=\tilde{\B}'^1:=\B^1+\B'^1, \tilde{\B}^2:=\B^2+\B'^1,
\tilde{\B}'^2:=\B'^2+\B^1$. The bimodules $\tilde{\B}_{\hat{c}}^1,\tilde{\B}_{\hat{c}}'^1,
\B_{\hat{c},\psi}/\tilde{\B}_{\hat{c}}^2,$ $ \B_{\hat{c},\psi}/\tilde{\B}_{\hat{c}}'^2$ still have
proper associated varieties. Moreover, $\B^2/\B^1\twoheadrightarrow \tilde{\B}^2/\tilde{\B}^1$
and $\B'^2/\B'^1\twoheadrightarrow \tilde{\B}'^2/\tilde{\B}'^1$. Since the Weil
generic fibers of $\B^2/\B^1, \B'^2/\B'^1$ are simple, it follows that these epimorphisms
are iso when specialized to $\hat{c}$. Therefore they are iso when specialized
to Zariski generic parameters (this is a consequence of the claim that the $\param$-supports
of HC bimodules are closed, see \ref{SSS_param_supp}). So we can replace $\B^1,\B'^1,
\B^2,\B'^2$ with $\tilde{\B}^1,\tilde{\B}'^1,\tilde{\B}^2,\tilde{\B}'^2$.

Thus we can assume that
$\B^1=\B'^1$. Now we can replace both $\B^2,\B'^2$ with $\B^2\cap \B'^2$
without changing $\B^2_c/\B^1_c$ and $\B'^2_c/\B'^1_c$ (for a Zariski generic $c$).
\end{proof}

For a Zariski generic $c\in \param'$, define the wall-crossing functor
$\WC_{c-\psi\leftarrow c}:=\B_c(\psi)\otimes^L_{H_c}\bullet:
D^b(\OCat_c) \rightarrow D^b(\OCat_{c-\psi})$.

\subsubsection{Properties}\label{SSS_WC_properties}
Now let us explain some important properties of wall-crossing functors.

The following is \cite[Proposition 5.3]{rouq_der}.

\begin{Prop}\label{Prop:der_equi}
For a Zariski generic $c\in \param'$, the functor $\WC_{c-\psi\leftarrow c}$
is a derived equivalence.
\end{Prop}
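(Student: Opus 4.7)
The plan is to construct an explicit quasi-inverse to $\WC_{c-\psi\leftarrow c}$ using the bimodule $\B_c(-\psi)$, produced by running the construction of Section \ref{SSS_WC_constr} in the opposite direction with the reversed sequence of characters $-\chi_k,\ldots,-\chi_1$. First I will define the $\C[\param']$-family $\B_{\param'}(-\psi)$ analogously and reduce the claim to producing natural morphisms
\[
\alpha_{\param'}:\B_{\param'}(-\psi)\otimes^L_{H_{\param'-\psi}}\B_{\param'}(\psi)\to H_{\param'},\quad
\beta_{\param'}:\B_{\param'}(\psi)\otimes^L_{H_{\param'}}\B_{\param'}(-\psi)\to H_{\param'-\psi}
\]
that specialize to quasi-isomorphisms at Zariski generic $c \in \param'$. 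Combined with Proposition \ref{Prop:tensor}, this will supply the desired derived equivalence with quasi-inverse $\WC_{c\leftarrow c-\psi}:=\B_c(-\psi)\otimes^L_{H_{c-\psi}}\bullet$.

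The key geometric input I will use is that for Weil generic $\hat c\in\param'$ each shift bimodule $\B_{\hat c,\pm\bar{\chi}_j}$ is already a Morita equivalence. I plan to derive this from Lemma \ref{Lem:spher_iso} together with the fact that at a Weil generic parameter the algebra $H_{\hat c}$ is simple, so $H_{\hat c}e$ exhibits $eH_{\hat c}e$ as Morita equivalent to $H_{\hat c}$ (and similarly for $e_\chi$). Consequently $\B_{\hat c,\psi}$ itself is a Morita equivalence; since $\B^1_{\hat c}$ and $(\B_{\param',\psi}/\B^2)_{\hat c}$ have proper associated varieties while $\B_{\hat c,\psi}$ has full associated variety, they must vanish for such $\hat c$, so $\B_{\hat c}(\psi)=\B_{\hat c,\psi}$ and symmetrically $\B_{\hat c}(-\psi)=\B_{\hat c,-\psi}$.

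I will then construct $\alpha_{\param'}$ and $\beta_{\param'}$ by iterating the evident multiplication maps between consecutive shift factors of $\B_{\param'}(\pm\psi)$. At Weil generic $\hat c$ these specialize to the Morita composition isomorphisms supplied by the previous step. Proposition \ref{Prop:tensor}(1) ensures that the cohomology sheaves of the cones of $\alpha_{\param'}$ and $\beta_{\param'}$ are HC $(H_{\param'},0)$- and $(H_{\param'-\psi},0)$-bimodules, and by \ref{SSS_param_supp} their right $\param'$-supports are closed in $\param'$. Since these supports miss the Weil generic locus they must be proper closed subsets of $\param'$, so $\alpha_c$ and $\beta_c$ are quasi-isomorphisms on the complement, i.e., at Zariski generic $c$. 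Together with Proposition \ref{Prop:tensor}(2,3), which guarantees that derived tensor with a HC bimodule descends to an endofunctor of $D^b(\OCat)$, this yields the proposition.

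The main obstacle will be the Weil-generic identification at the level of bimodules: I need to exhibit $\alpha_{\hat c}$ as the honest Morita composition isomorphism and not merely as an abstract isomorphism of source and target, which requires a careful compatibility check between the shift-bimodule multiplications and the isomorphism of Lemma \ref{Lem:spher_iso}. A secondary technical point is ensuring enough flatness of the subquotient $\B^2/\B^1$ over $\C[\param']$ for the cone argument to be meaningful; this should follow from the kind of sub-bimodule bookkeeping already carried out in Lemma \ref{Lem:wc_well_def}.
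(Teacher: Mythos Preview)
There is a genuine gap at the Weil-generic step. You assert that for Weil generic $\hat c\in\param'$ the algebra $H_{\hat c}$ is simple, and from this deduce that each shift bimodule $\B_{\hat c,\pm\bar\chi_j}$ is a Morita equivalence and that $\B^1_{\hat c}$, $(\B_{\param',\psi}/\B^2)_{\hat c}$ must vanish. But $\param'$ is the affine subspace through $c$ parallel to the span of a \emph{proper} face $F$; the hyperplanes $\Pi_{\tau,\xi}$ containing $F$ are precisely those to which $\param'$ is parallel, so the corresponding $c$-function relations persist along all of $\param'$. Hence for Weil generic $\hat c\in\param'$ the category $\OCat_{\hat c}$ is typically not semisimple, and by the argument recalled in the proof of Lemma~\ref{Lem:non_essent_cross} the algebra $H_{\hat c}$ is then not simple. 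Lemma~\ref{Lem:wc_Ringel} makes this concrete: at such $\hat c$ the functor $\WC_{\hat c-\psi\leftarrow\hat c}$ is the inverse Ringel duality, a derived equivalence that is in general not $t$-exact, so $\B_{\hat c}(\psi)$ is \emph{not} a Morita bimodule. In particular ``proper associated variety'' for $\B^1_{\hat c}$ and $(\B_{\param',\psi}/\B^2)_{\hat c}$ does not force them to vanish, and your identification $\B_{\hat c}(\psi)=\B_{\hat c,\psi}$ fails.

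The paper itself does not argue here; it quotes \cite[Proposition~5.3]{rouq_der}. If you want to run a degeneration argument along your lines, the correct Weil-generic input is Lemma~\ref{Lem:wc_Ringel}: $\B_{\hat c}(\psi)\otimes^L_{H_{\hat c}}\bullet$ is already a derived equivalence because it is Ringel duality. One then degenerates this (e.g.\ by showing that $\B_{\param'}(\psi)\otimes^L_{H_{\param'}}P_{\param'}$, for $P_{\param'}$ a projective generator, specializes to a tilting generator at Zariski generic $c$, tilting-ness being an open condition). Your maps $\alpha_{\param'},\beta_{\param'}$ are built from multiplications of the \emph{unmodified} bimodules $\B_{\param',\pm\bar\chi_j}$ and do not obviously descend to the subquotients $\B_{\param'}(\pm\psi)$, so even with the corrected Weil-generic input that part of the plan would need to be reworked.
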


Further, it turns out that, for a Weil generic $\hat{c}\in \param'$, the
functor $\WC_{\hat{c}-\psi\leftarrow \hat{c}}$ realizes the inverse Ringel duality.
More precisely, the following holds, see \cite[Theorem 4.1]{rouq_der}.

\begin{Lem}\label{Lem:wc_Ringel}
There is a labelling preserving highest weight equivalence $\OCat_{\hat{c}-\psi}\xrightarrow{\sim}
\OCat_{\hat{c}}^\vee$ that intertwines $\WC_{\hat{c}-\psi\leftarrow \hat{c}}$ with the inverse Ringel
duality functor $D^b(\OCat_{\hat{c}})\xrightarrow{\sim} D^b(\OCat_{\hat{c}}^\vee)$. In particular,
$\WC_{\hat{c}-\psi\leftarrow \hat{c}}(\Delta_{\hat{c}}(\tau))=\nabla_{\hat{c}-\psi}(\tau)$.
\end{Lem}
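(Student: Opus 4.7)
The plan is to reduce to a Weil generic $\hat c \in \param'$, where the wall-crossing bimodule simplifies to a specialized composition of shift bimodules, and to verify the intertwining property by computing $\WC$ on Verma modules. First I would produce the label-preserving highest weight equivalence $\Phi\colon \OCat_{\hat c - \psi} \xrightarrow{\sim} \OCat_{\hat c}^\vee$. The derived equivalence $\WC_{\hat c - \psi \leftarrow \hat c}$ from Proposition \ref{Prop:der_equi} should be perverse for a suitable perversity function, so that indecomposable projectives of $\OCat_{\hat c - \psi}$ correspond under $\WC$ to indecomposable tiltings of $\OCat_{\hat c}$. This yields $\OCat_{\hat c - \psi} \cong \End_{\OCat_{\hat c}}(T)^{opp}\operatorname{-mod} = \,^\vee\OCat_{\hat c}$, which becomes $\OCat_{\hat c}^\vee$ after taking opposites. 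Concretely, the order on $\Irr(W)$ for $\OCat_{\hat c - \psi}$ is opposite to the one for $\OCat_{\hat c}$ on each pair $\{\tau,\xi\}$ whose hyperplane $\Pi_{\tau,\xi}$ is crossed by the interval from $\hat c$ to $\hat c - \psi$, matching the order reversal in $\OCat_{\hat c}^\vee$.

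Next, to identify $\Phi \circ \WC_{\hat c - \psi \leftarrow \hat c}$ with the inverse Ringel duality $\mathcal{R}^{-1}$, I would check that $\WC(\Delta_{\hat c}(\tau)) \cong \nabla_{\hat c - \psi}(\tau)$ for every $\tau \in \Irr(W)$. For Weil generic $\hat c$, the sub-bimodules $\B^1_{\hat c}$ and $\B_{\param', \psi}|_{\hat c}/\B^2_{\hat c}$ are HC bimodules with proper associated variety; by closedness of right $\param$-supports (\ref{SSS_param_supp}), they vanish at Weil generic points in $\param'$. Hence $\B_{\hat c}(\psi) = \B_{\param', \psi}|_{\hat c}$ is simply the specialized composition of shift bimodule fibers, so $\WC(\Delta_{\hat c}(\tau))$ can be computed one factor at a time.

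Finally, each shift bimodule $\B_{c_i, \epsilon_i \bar{\chi}_i}$ at Weil generic parameter induces a Morita-type equivalence whose action on Verma modules is controlled by Lemma \ref{Lem:spher_iso} together with the character twist $\psi_{\chi *}$. Composing these factors and computing characters via Lemma \ref{Lem:K0_class_recover}, one obtains an object that is costandardly filtered and has the same character as $\nabla_{\hat c - \psi}(\tau)$; the triangular structure of costandard filtrations then forces the output to be $\nabla_{\hat c - \psi}(\tau)$ itself. The main obstacle is precisely this bookkeeping: each individual shift bimodule twists labels by a character of $W$ and shifts parameters, so one must verify both that the cumulative twist across the full wall-crossing composition is trivial on the original labeling and that the final output is a single costandard rather than a nontrivial extension of costandards.
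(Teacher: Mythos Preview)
First, note that the paper does not actually prove this lemma: it simply cites \cite[Theorem 4.1]{rouq_der}. So your proposal is being compared against an external argument, not one visible here.

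There is a genuine gap in your step reducing $\B_{\hat c}(\psi)$ to the raw specialization $\B_{\param',\psi}|_{\hat c}$. You argue that $\B^1_{\hat c}$ and $(\B_{\param',\psi}/\B^2)_{\hat c}$ vanish at Weil generic $\hat c$ because they have proper associated variety and because right $\param$-supports are closed. But these are two different notions: the associated variety lives in $(\h\oplus\h^*)/W$, whereas the $\param$-support lives in $\param'$. A HC bimodule with proper associated variety (say, a nonzero finite-dimensional bimodule) is certainly not zero, and nothing about the closedness of $\Supp^r_{\param}$ forces such a bimodule to specialize to zero generically. The construction in \ref{SSS_WC_constr} explicitly allows $\B^1_{\hat c}$ and $\B_{\hat c,\psi}/\B^2_{\hat c}$ to be nonzero at Weil generic $\hat c$; it only says they have \emph{proper} associated variety. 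So your identification $\B_{\hat c}(\psi)=\B_{\hat c,\psi}$ is unjustified, and with it the factor-by-factor computation of $\WC$ on Vermas collapses.

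There is also a circularity risk. You invoke perversity (Proposition \ref{Prop:perverse}) to argue that projectives go to tiltings and hence obtain the Ringel-dual description. But in \cite{rouq_der} the perversity statement is proved \emph{after} and \emph{using} the identification of $\WC_{\hat c-\psi\leftarrow\hat c}$ with inverse Ringel duality at Weil generic $\hat c$ (this is how the filtrations $\Cat^c_i$ are controlled via degeneration). So you cannot use Proposition \ref{Prop:perverse} as input here without reworking its proof independently.

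Finally, even granting your reduction, the ``bookkeeping'' you flag at the end is not a minor issue. An individual shift bimodule $\B_{c_i,\epsilon_i\bar\chi_i}$ need not give a Morita equivalence at all: this requires the relevant spherical idempotents not to kill anything, i.e., avoidance of the aspherical locus, and the intermediate parameters $\param'_j$ may well meet that locus even for Weil generic $\hat c\in\param'$. The actual argument in \cite{rouq_der} bypasses all of this by working directly with the composite bimodule and characterizing the resulting derived equivalence via a deformation/tilting argument rather than by decomposing into individual shifts.
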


The most important property of $\WC_{c-\psi\leftarrow c}$ is that it is a {\it perverse
equivalence}. To define a perverse equivalence, one needs filtrations by Serre
subcategories, in our case those will come from certain chains of two-sided ideals.
We have sequences of two-sided ideals $\{0\}
= I^{\param'}_n\subset I^{\param'}_{n-1}\subset \ldots
\subset I^{\param'}_0\subset H_{\param'}$ and
$\{0\}= I^{\param'-\psi}_n\subset I^{\param'-\psi}_{n-1}\subset \ldots
\subset I^{\param'-\psi}_0\subset H_{\param'-\psi}$ (here $n=\dim\h$) that have the following
property: for a Weil generic $\hat{c}\in \param'$ the specialization
$I^{\hat{c}}_i$ is the minimal ideal in $H_{\hat{c}}$ with the property
$\dim \VA(H_{\hat{c}}/I^{\hat{c}}_i)\leqslant 2i$ and the similar property
holds for $I^{\hat{c}-\psi}_i$. Similarly to Lemma \ref{Lem:wc_well_def},
this shows that the specializations
$I^c_i,I^{c-\psi}_i$ are well-defined for a Zariski generic parameter
$c\in \param'$. Moreover, for such a $c$, we have $(I^c_i)^2=I^c_i$
and $(I^{c-\psi}_i)^2=I^{c-\psi}_i$. So we can consider the Serre
subcategories $\Cat^c_i\subset \OCat_c(W), \Cat^{c-\psi}_i\subset \OCat_{c-\psi}(W)$
consisting of all modules annihilated by $I^c_{n-i}, I^{c-\psi}_{n-i}$.

The following claim is \cite[Theorem 6.1]{rouq_der}.

\begin{Prop}\label{Prop:perverse}
The equivalence $\WC_{c-\psi\leftarrow c}$ is perverse with respect to the
filtrations $\Cat^c_i\subset \OCat_c(W), \Cat^{c-\psi}_i\subset \OCat_{c-\psi}(W)$
meaning that the following holds.
\begin{itemize}
\item[(P1)] $\WC_{c-\psi\leftarrow c}$ restricts to an equivalence
between $D^b_{\Cat^c_i}(\OCat_c(W))$ and $D^b_{\Cat^{c-\psi}_i}(\OCat_{c-\psi}(W))$.
Here we write $D^b_{\Cat^c_i}(\OCat_c(W))$ for the full subcategory of
$D^b(\OCat_c(W))$ of all objects with homology in $\Cat^c_i$.
\item[(P2)] For $M\in \Cat^c_i$, we have $H_j(\WC_{c-\psi\leftarrow c}M)=0$
for $j<i$ and $H_{j}(\WC_{c-\psi\leftarrow c}M)\in \Cat^{c-\psi}_{i+1}$ for $j>i$.
\item[(P3)] The functor $M\mapsto H_i(\WC_{c-\psi\leftarrow c}M)$
defines an equivalence $\Cat^c_i/\Cat^c_{i+1}\xrightarrow{\sim}\Cat^{c-\psi}_i/\Cat^{c-\psi}_{i+1}$.
\end{itemize}
\end{Prop}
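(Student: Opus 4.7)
The plan is to deduce perversity by a flat-deformation argument from a Weil generic parameter $\hat c\in \param'$, at which the wall-crossing functor has a transparent meaning. First I would establish that both the wall-crossing bimodule $\B_c(\psi)$ and the ideals $I^c_i$ together with the Serre subcategories $\Cat^c_i$ they cut out assemble into flat $\C[\param']$-families whose behaviour at Zariski generic $c$ is controlled by the Weil generic fibre. This reduces to checking that the natural maps $I^{\param'}_i\otimes_{\C[\param']}\C_c\to I^c_i$ are isomorphisms and that $(I^c_i)^2=I^c_i$ outside a countable union of hyperplanes; both follow from closedness of the $\param$-support of HC bimodules (see \ref{SSS_param_supp}) together with an argument patterned on Lemma \ref{Lem:wc_well_def}: any failure cuts out a proper closed subset of $\param'$.

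Next I would pass to Weil generic $\hat c\in \param'$. By Lemma \ref{Lem:wc_Ringel}, $\WC_{\hat c-\psi\leftarrow \hat c}$ is identified, through a labelling-preserving highest weight equivalence, with the inverse Ringel duality $\mathcal{R}^{-1}:D^b(\OCat^\vee_{\hat c})\to D^b(\OCat_{\hat c})$. Lemma \ref{Lem:titl_support} says that for a simple $L$ the smallest $i$ with $\operatorname{Ext}^i(T,L)\neq 0$ equals $\dim\h-\dim\Supp(L)$, so $\mathcal{R}$ (hence $\mathcal{R}^{-1}$) shifts modules supported on a variety of dimension $d$ by exactly $n-d$ cohomological degrees, and sends such a module to one whose cohomology is again supported on a variety of dimension $\leqslant d$. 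By construction the ideal $I^{\hat c}_{n-i}$ is the largest two-sided ideal whose quotient has associated variety of dimension $\leqslant 2(n-i)$; for Weil generic $\hat c$ this is precisely the ideal annihilating those modules whose simple subquotients have support of dimension $\leqslant i$. Therefore properties (P1)--(P3) for $\mathcal{R}^{-1}$ with respect to the support filtration translate immediately into the same properties for $\WC_{\hat c-\psi\leftarrow \hat c}$ with respect to $\Cat^{\hat c}_i$.

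Finally, I would spread the result to Zariski generic $c\in \param'$. Vanishing statements like (P1) and (P2) are semicontinuous: the locus in $\param'$ on which $H_j(\B_c(\psi)\otimes^L_{H_c}M)$ is forced to lie in $\Cat^{c-\psi}_{i+1}$ (or to vanish) is cut out by vanishing of $\Tor$'s of certain HC $\C[\param']$-bimodules, hence is the complement of a closed subset; since it contains the Weil generic point, it contains a Zariski open neighbourhood of $\hat c$. For (P3), the induced cohomology functor between quotient categories is constructed from the same flat family, and its being an equivalence is witnessed by an isomorphism of a unit/counit whose failure is again cut out by a proper closed subset of $\param'$.

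The main obstacle I expect is (P3): one must upgrade from vanishing statements to an actual abelian equivalence of quotients $\Cat^c_i/\Cat^c_{i+1}\xrightarrow{\sim}\Cat^{c-\psi}_i/\Cat^{c-\psi}_{i+1}$ at Zariski generic $c$. At the Weil generic point the quotient categories are small enough that the equivalence is inherited from $\mathcal{R}^{-1}$ in a clean way, but carrying this through a flat deformation requires carefully tracking that the candidate functor (given by the $i$-th perverse cohomology) really is exact, sends simples to simples, and is essentially surjective after specialization. This step is where one must use the idempotence $(I^c_i)^2=I^c_i$ in an essential way, and it is the most delicate ingredient of the argument.
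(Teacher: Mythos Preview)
The paper does not prove this proposition: the sentence immediately preceding it reads ``The following claim is \cite[Theorem 6.1]{rouq_der}'', so the result is imported from an earlier paper and no argument is given here. There is thus no proof in the present text against which your proposal can be compared.

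That said, your outline is broadly the shape of the argument one expects (and, to my knowledge, the one in the cited reference): identify $\WC_{\hat c-\psi\leftarrow\hat c}$ with inverse Ringel duality at a Weil generic $\hat c$ via Lemma~\ref{Lem:wc_Ringel}, check (P1)--(P3) for Ringel duality with respect to the support filtration, and then spread to Zariski generic $c$ by semicontinuity. Two points deserve care. First, Lemma~\ref{Lem:titl_support} only gives the \emph{lowest} nonvanishing degree of $\mathcal{R}L$; to get the second half of (P2), that higher cohomology lands in $\Cat^{\hat c-\psi}_{i+1}$, one needs an additional argument (for instance, via the opposite Ringel duality and biadjointness, or by exploiting that the filtration subquotients are semisimple at Weil generic $\hat c$). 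Second, the identification of $\Cat^{\hat c}_i$ (defined via the ideals $I^{\hat c}_{n-i}$) with the support filtration on $\OCat_{\hat c}$ requires relating associated varieties of two-sided ideals to supports of modules in category $\OCat$; this is true but is not immediate from what is recalled in this paper. Your assessment that (P3) is the most delicate deformation step is accurate.
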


(P3) allows  us to define a natural bijection $\Irr(\Cat^c_i/\Cat^c_{i+1})\xrightarrow{\sim}
\Irr(\Cat^{c-\psi}_i/\Cat^{c-\psi}_{i+1})$. Since $$\Irr(W)=\bigsqcup_{i}\Irr(\Cat^c_i/\Cat^c_{i+1})=
\bigsqcup_i \Irr(\Cat^{c-\psi}_i/\Cat^{c-\psi}_{i+1}),$$ we get a self-bijection
$\wc_{c-\psi\leftarrow c}:\Irr(W)\xrightarrow{\sim}\Irr(W)$ to be called the
{\it wall-crossing bijection}. Below we will see that this bijection is actually independent
of $c$ as long as $c$ is Zariski generic.

\begin{Cor}\label{Cor:wc_bij_supp}
The bijection $\wc_{c-\psi\leftarrow c}$ preserves supports.
\end{Cor}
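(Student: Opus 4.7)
The plan is to combine Proposition \ref{Prop:perverse} with a geometric interpretation of the filtration $\Cat^c_\bullet$. First, I would identify $\Cat^c_i$ with the Serre subcategory of $\OCat_c(W)$ whose simples $L_c(\tau)$ satisfy $\dim\Supp L_c(\tau)\leq n-i$. This rests on two facts: the defining property of $I^c_i$ at Weil generic $\hat{c}\in\param'$ as the minimal ideal whose quotient has associated variety of dimension at most $2i$, which by closedness of $\param$-supports (Section \ref{SSS_param_supp}) and the well-definedness argument of Lemma \ref{Lem:wc_well_def} passes to Zariski generic $c\in\param'$; and the standard equality $\dim\VA(H_c/\operatorname{Ann}L_c(\tau))=2\dim\Supp L_c(\tau)$ for simples in $\OCat_c(W)$, which can be deduced from the isomorphism of completions in Section \ref{SS_compl} applied at a generic point of $\Supp L_c(\tau)$ (the completion reduces to a parabolic $\underline{W}$ where the restricted simple is finite-dimensional, so its annihilator has full associated variety inside $H_c(\underline{W})$; transport back via the centralizer-algebra description).

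Given this identification, part (P3) of Proposition \ref{Prop:perverse} immediately yields that $\wc_{c-\psi\leftarrow c}$ preserves $\dim\Supp$. To upgrade this to preservation of the actual support stratum $X(\underline{W})$, I would refine the chain of ideals $I^c_\bullet$ so as to track the type of parabolic, not just its dimension. Concretely, at Weil generic $\hat{c}\in\param'$ and for each $W$-conjugacy class $[\underline{W}]$ of parabolic subgroups, define $I^{\hat{c}}_{[\underline{W}]}$ as the minimal ideal whose quotient has associated variety contained in the union of $W$-orbit closures in $\h\oplus\h^*$ not containing the generic point of $W\cdot(\h^{\underline{W}}\oplus(\h^*)^{\underline{W}})$. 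These specialize to Zariski generic $c\in\param'$ by a direct analog of Lemma \ref{Lem:wc_well_def}, and the perversity proof of \cite[Theorem 6.1]{rouq_der} adapts verbatim to give analogs of (P1)--(P3) for this refined filtration; running the refined (P3) yields the preservation of the actual support.

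The main obstacle is establishing the refined perverse equivalence in the third step: one must verify that the construction of the sub-bimodules $\B^1\subset\B^2\subset\B_{\param',\psi}$ from Section \ref{SSS_WC_constr} respects the finer stratification, and this requires a careful analysis of the associated varieties of the Tor terms involved. A cleaner alternative that bypasses the refinement uses the commutativity of $\WC$ with the restriction functors $\Res^W_{\underline{W}}$ (developed in Section \ref{SS_WC_vs_Res}): combined with the recognition principle that $X(\underline{W}_1)=\Supp L_c(\tau)$ iff $\underline{W}_1$ is maximal, up to $W$-conjugacy, among parabolics with $\Res^W_{\underline{W}_1}L_c(\tau)\neq 0$, this yields support preservation immediately, at the price of invoking material developed only later in the paper.
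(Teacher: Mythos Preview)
Your proposal is considerably more elaborate than what the paper actually does, and the cleanest of your routes (the third) still overshoots. The paper's proof is three lines and uses only material already in Section~\ref{SS_IndRes}. The key observation you are missing is that support containment holds for the derived tensor product with \emph{any} HC bimodule, not just the wall-crossing one: for an arbitrary HC $H_{c'}$-$H_c$-bimodule $\B$ and simple $L\in\OCat_c(W)$, one has $\Supp\bigl(\Tor^{H_c}_i(\B,L)\bigr)\subset\Supp(L)$. This is immediate from property 2) in~\ref{SSS_res_HC_prop}, namely $\Res^W_{\underline{W}}(\B\otimes^L_{H_c}M)\cong \B_{\dagger,\underline{W}}\otimes^L_{H_c(\underline{W})}\Res^W_{\underline{W}}M$: if a point $b$ lies outside $\Supp L$ then restriction to its stabilizer kills $L$, hence kills every $\Tor_i(\B,L)$. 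Since $L_{c-\psi}(\wc_{c-\psi\leftarrow c}(\tau))$ occurs as a composition factor of $H_i(\WC_{c-\psi\leftarrow c}L_c(\tau))=\Tor_i(\B_c(\psi),L_c(\tau))$ for the appropriate $i$, one gets $\Supp(\wc_{c-\psi\leftarrow c}L)\subset\Supp L$. The same inclusion holds for $\wc_{c\leftarrow c-\psi}$; both being bijections on the finite set $\Irr(W)$, a counting argument over the poset of possible supports forces equality.

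Comparing with your approaches: the first only yields equality of $\dim\Supp$, as you note. The second (refining the ideal chain by parabolic type and reproving perversity) might be made to work but is genuine extra labor and entirely unnecessary here. The third is morally the right idea, but you frame it via the full intertwining $\underline{\WC}\circ\Res\cong\Res\circ\WC$ of Section~\ref{SS_WC_vs_Res}; the paper needs only the much weaker and earlier fact that $\Res$ commutes with $\B\otimes^L\bullet$ for an arbitrary HC bimodule $\B$, with no need whatsoever to identify $\B_{\dagger,\underline{W}}$ as the wall-crossing bimodule for $\underline{W}$. That weaker statement is already available in~\ref{SSS_res_HC_prop}, so there is no forward reference, and the perverse filtration never enters beyond the definition of $\wc$.
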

\begin{proof}
Let $\B$ be a HC $H_{c'}$-$H_c$ bimodule and $L\in \Irr(\OCat_c)$. We claim that
$\Supp(\operatorname{Tor}^{H_c}_i(\B,L))\subset \Supp(L)$. This follows from
the compatibility of Tor's with the restriction functors and from the compatibility
of the restriction functors with supports, see \ref{SSS_res_HC_prop}. It follows that
$\Supp(\wc_{c-\psi\leftarrow c}L)\subset \Supp L$. Recall that $\wc_{c-\psi\leftarrow c}$ is a bijection.
So, for each closed subvariety
$Y\subset \h$, the number of simples in
$\OCat_c$ whose support is contained in $Y$ does not exceed the similarly
defined number for $\OCat_{c-\psi}$. But we also have
$\Supp(\wc_{c\leftarrow c-\psi}L')\subset \Supp L'$ for every
simple $L'$ in $\OCat_{c-\psi}$. So the numbers of simples with given
support is the same for $\OCat_c, \OCat_{c-\psi}$. We see that
$\Supp(\wc_{c-\psi\leftarrow c}L)= \Supp L$.
\end{proof}

\section{Further properties of wall-crossing}\label{S_wc_further}
\subsection{Wall-crossing bijections are independent of the choice of $c$}\label{SS_wc_bij_gen}
The goal of this section is to prove the following claim.
\begin{Prop}\label{Prop:wc_bij_comp}
The bijection $\wc_{c-\psi\leftarrow c}:\Irr(W)\rightarrow \Irr(W)$
is independent of the choice of a Zariski generic $c$.
\end{Prop}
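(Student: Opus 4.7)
The plan is to realize the bijection $\wc_{c-\psi\leftarrow c}$ as the fiber at $c$ of a single bijection defined uniformly over a nonempty Zariski open subset $U\subset\param'$. Since $\param'$ is irreducible, $U$ is Zariski connected, and any assignment from $U$ to the finite set $\operatorname{Bij}(\Irr W)$ that arises from an algebraic family over $U$ must be constant. Thus it suffices to check that the perverse structure underlying $\wc_{c-\psi\leftarrow c}$ comes from a family over $\param'$, and that the induced assignment $\Irr(W)\to\Irr(W)$ is read off in a way that does not depend on the choice of $c\in U$.

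First I would collect the ingredients from Section \ref{SSS_WC_constr} and Section \ref{SSS_WC_properties} into a $\param'$-family. The bimodule $\B_{\param'}(\psi)\in\HC(H_{\param'},-\psi)$ and the ideals $I_i^{\param'}\subset H_{\param'}$, $I_i^{\param'-\psi}\subset H_{\param'-\psi}$ are by construction defined over $\param'$. By Lemma \ref{Lem:wc_well_def} and the closedness of $\param$-supports of HC bimodules (\ref{SSS_param_supp}), there is a Zariski open $U\subset\param'$ over which the specializations $\B_c(\psi), I_i^c, I_i^{c-\psi}$ at $c\in U$ are genuine fiberwise specializations and the fiberwise perverse structure of Proposition \ref{Prop:perverse} holds. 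Setting $R=\C[U]$ and base-changing, I obtain an $R$-linear highest weight category $\OCat_R(W)$ with simples $L_R(\tau)$ specializing to $L_c(\tau)$ for $c\in U$, $\tau\in\Irr(W)$, together with Serre filtrations $\Cat_R^i\subset\OCat_R(W)$, $\Cat_{R-\psi}^i\subset\OCat_{R-\psi}(W)$ and a derived functor $\WC_R=\B_R(\psi)\otimes^L_{H_R}\bullet$ specializing fiberwise to $\WC_{c-\psi\leftarrow c}$.

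Next I would define the candidate uniform bijection $\sigma:\Irr(W)\to\Irr(W)$ by declaring $L_R(\sigma(\tau))$ to be the image of $L_R(\tau)\in\Cat_R^i/\Cat_R^{i+1}$ (for the unique $i$ with this property) under the $R$-linear analog of (P3) of Proposition \ref{Prop:perverse}. After possibly shrinking $U$ (using generic flatness and the fiberwise validity of Proposition \ref{Prop:perverse}), this analog promotes to a genuine $R$-linear equivalence $\Cat_R^i/\Cat_R^{i+1}\xrightarrow{\sim}\Cat_{R-\psi}^i/\Cat_{R-\psi}^{i+1}$ whose image of each simple $L_R(\tau)$ is again of the form $L_R(\sigma(\tau))$ with label globally defined over $U$. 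Specializing at $c\in U$ recovers the construction of $\wc_{c-\psi\leftarrow c}$, and hence $\sigma(\tau)=\wc_{c-\psi\leftarrow c}(\tau)$ for every $c\in U$.

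The main obstacle will be the bookkeeping in the family setting: one must verify that $\Cat_R^i/\Cat_R^{i+1}$ is actually a highest weight category whose simples form $R$-flat families with constant labels, and that the higher cohomology sheaves $H_j(\WC_R L_R(\tau))$ for $j>i$ lie in the appropriate Serre subcategory, after a further shrinking of $U$. This is a combination of generic flatness, finite length of the fibers, and the fiberwise conclusions of Proposition \ref{Prop:perverse}. Once it is in place, the fact that $\sigma$ is independent of $c$ is automatic, yielding the desired statement.
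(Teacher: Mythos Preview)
Your plan has a genuine gap at exactly the point you flag as ``bookkeeping'': there is no $R$-flat family $L_R(\tau)$ whose fiber at each $c\in U$ is $L_c(\tau)$, and no amount of shrinking $U$ produces one. Generic flatness applied to any $\C[\param']$-module playing the role of $L_R(\tau)$ gives a flat family whose fiber at a Weil generic $\hat{c}$ is $L_{\hat{c}}(\tau)$; flatness then forces the fiber at every closed $c\in U$ to have the \emph{same} class in $K_0(W\text{-mod})$. But $[L_{\hat{c}}(\tau)]\neq [L_c(\tau)]$ whenever $\OCat_c$ is strictly more degenerate than $\OCat_{\hat{c}}$---and this happens on a Zariski dense subset of $\param'$, not on a closed one, so it cannot be excised. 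Lemma \ref{Lem:class_simple_degen} in the paper is precisely the computation of this discrepancy: $[L_{\hat{c}}(\tau)]=[L_c(\tau)]+\sum_{\xi<_F\tau}n_\xi[L_c(\xi)]$. The same issue obstructs the claim that the simples of $\Cat_R^i/\Cat_R^{i+1}$ are $R$-flat with constant labels: a priori neither the set $\{\tau:L_c(\tau)\in\Cat^c_i\}$ nor the $K_0$-class of $L_c(\tau)$ in the quotient is visibly constant in $c$.

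The paper's proof sidesteps the nonexistence of flat simples by passing entirely to $K_0$. It introduces a coarser filtration of $\OCat_c$ by the pre-order $\leqslant_F$ attached to the face $F$; Lemma \ref{Lem:class_simple_degen} shows that the error terms $[L_{\hat{c}}(\tau)]-[L_c(\tau)]$ lie in strictly lower $F$-pieces, so in each subquotient $\OCat_{c,\sim_F\tau}$ the classes $[L_c(\tau')]$ \emph{are} independent of $c$. Combined with Proposition \ref{Prop:WC_K0} (wall-crossing is the identity on $K_0$), this forces the subgroups $K_0(\Cat^c_{\sim_F\tau,i})$, hence the labels of simples in each perversity layer, and finally the bijection itself, to be constant. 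The substance of the argument is exactly the control of how simples degenerate (Lemma \ref{Lem:class_simple_degen}), which your outline treats as a formality.
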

This will allow us to compute some of the wall-crossing bijections for the groups
$G(\ell,1,n)$ in Section \ref{SS_wc_bij_comput} below. We prove Proposition \ref{Prop:wc_bij_comp}
in \ref{SSS_wc_proof_compl} after some preliminary considerations.

\subsubsection{Wall-crossing on $K_0$}
We have the following.

\begin{Prop}\label{Prop:WC_K0}
Let  $c\in \param'$ be Zariski generic. Then the following is true.
\begin{enumerate}
\item The complex $\WC_{c-\psi\leftarrow c}\Delta_c(\tau)$
has no higher homology and its class in $K_0$ equals $[\nabla_{c-\psi}(\tau)]$.
\item In particular,
$\WC_{c-\psi\leftarrow c}$ gives the identity map $K_0(\OCat_c(W))\rightarrow K_0(\OCat_{c-\psi}(W))$
(recall that both are identified with $K_0(W\operatorname{-mod})$).
\end{enumerate}
\end{Prop}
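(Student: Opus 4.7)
The plan is to deduce the proposition from the Weil-generic case, where the wall-crossing functor realizes the inverse Ringel duality (Lemma \ref{Lem:wc_Ringel}), by a family/specialization argument over the affine subspace $\param'$.

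I first observe that (2) is formal from (1). In a highest weight category one has $[\Delta(\tau)] = [\nabla(\tau)]$ in $K_0$ (a standard consequence of BGG reciprocity, or, in the present setting, of the contravariant duality on $\OCat_c(W)$ that exchanges standards and costandards up to a character twist; alternatively, via a direct character comparison and Lemma \ref{Lem:K0_class_recover}). Under the identification $K_0(\OCat_{c-\psi}(W)) \cong K_0(W\operatorname{-mod})$ via Vermas, both $[\Delta_{c-\psi}(\tau)]$ and $[\nabla_{c-\psi}(\tau)]$ then map to $[\tau]$, so (1) translates to the assertion that $\WC_{c-\psi\leftarrow c}$ induces the identity on $K_0(W\operatorname{-mod})$, which is (2).

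For (1) I work with the family $\B_{\param'}(\psi) \in \HC(H_{\param'}, -\psi)$ from \ref{SSS_WC_constr} together with the Verma family $\Delta_{\param'}(\tau) = H_{\param'}\otimes_{S(\h)\#W}\tau$, which is free over $\C[\param']\otimes S(\h^*)$ and in particular flat over $\C[\param']$. By generic flatness, there is a Zariski open $U \subset \param'$ on which both $\B_{\param'}(\psi)$ and the (finitely many nonzero, since Vermas have finite projective dimension in $\OCat$) Tor sheaves $T_j := \Tor_j^{H_{\param'}}(\B_{\param'}(\psi), \Delta_{\param'}(\tau))$ are right-$\C[\param']$-flat. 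Standard base change then yields
$$\Tor_j^{H_c}(\B_c(\psi), \Delta_c(\tau)) \;\cong\; T_j \otimes_{\C[\param']} \C_c, \qquad c \in U,\; j \geq 0.$$
By Lemma \ref{Lem:wc_Ringel}, at any Weil generic $\hat c \in U$ the left-hand side vanishes for $j > 0$ and reduces to $\nabla_{\hat c - \psi}(\tau)$ in degree $0$. Hence $T_j \otimes \C_{\hat c} = 0$ for all $j > 0$ at every Weil generic $\hat c$; since the right $\param'$-support of $T_j$ is a single closed algebraic subvariety of $U$ (analogously to the statement recalled in \ref{SSS_param_supp}) and no Weil generic $\hat c$ can lie in it, this support must be proper. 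Setting $U_0 := U \setminus \bigcup_{j>0}\Supp^r_{\param'}(T_j)$, a nonempty Zariski open subset of $\param'$, we obtain the vanishing of higher Tor for all $c \in U_0$, proving the first assertion of (1).

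For the $K_0$ identity, once $\WC_{c-\psi\leftarrow c}\Delta_c(\tau)$ is concentrated in degree $0$ for $c \in U_0$, the derived equivalence of Proposition \ref{Prop:der_equi} induces a $K_0$-automorphism of $K_0(W\operatorname{-mod})$ that, thanks to the flatness of $T_0$ over $\C[U_0]$, is locally constant in $c \in U_0$ (multiplicities in a Verma filtration are constants in a flat family of highest weight modules). At any Weil generic $\hat c \in U_0$ Lemma \ref{Lem:wc_Ringel} identifies this automorphism with the identity, so $[\WC_{c-\psi\leftarrow c}\Delta_c(\tau)] = [\Delta_{c-\psi}(\tau)] = [\nabla_{c-\psi}(\tau)]$ for every $c \in U_0$, completing (1). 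The main obstacle I anticipate is the technical work of justifying the family argument: the bimodule $\B_{\param'}(\psi)$ is defined as a subquotient of a composition of translation bimodules in Lemma \ref{Lem:wc_well_def}, so one must trace through that construction to establish coherence and generic flatness over $\C[\param']$, and similarly check that the Tor sheaves $T_j$ inherit enough finiteness for the $\param'$-support statement of \ref{SSS_param_supp} to apply. These are standard but non-trivial checks not spelled out in the excerpt.
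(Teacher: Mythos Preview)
Your approach is essentially the same as the paper's: both run a family argument over $\param'$, invoke Lemma~\ref{Lem:wc_Ringel} at a Weil generic point to get $\WC_{\hat c-\psi\leftarrow \hat c}\Delta_{\hat c}(\tau)=\nabla_{\hat c-\psi}(\tau)$, and then use generic freeness of the homology sheaves over $\C[\param']$ to propagate the vanishing of higher homology to a Zariski open set; part (2) is deduced from (1) via $[\Delta_{c-\psi}(\tau)]=[\nabla_{c-\psi}(\tau)]$ in both treatments. The one point where the paper is cleaner is the $K_0$ step: rather than invoking ``multiplicities in a Verma filtration are constant in a flat family'' (which would require knowing $T_0$ is standardly filtered over the base, something not established), the paper observes that the \emph{character} of $H_0$ is constant under flat degeneration and then applies Lemma~\ref{Lem:K0_class_recover} to conclude $[H_0(\WC_{c-\psi\leftarrow c}\Delta_c(\tau))]=[\nabla_{c-\psi}(\tau)]$ directly.
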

\begin{proof}
Let us prove (1). By Lemma \ref{Lem:wc_Ringel},
when $\hat{c}$ is Weil generic, we have \begin{equation}\label{eq:WC_Delta_image}\WC_{\hat{c}-\psi\leftarrow \hat{c}}\Delta_{\hat{c}}(\tau)=\nabla_{\hat{c}-\psi}(\tau).\end{equation}
The objects $H_i(\B_{\param'}(
\psi)\otimes^L_{H_{\param'}}\Delta_{\param'}(\tau))$ are in $\OCat_{\param'}(W)$
for any $i$. In particular, they are finitely generated over  $\C[\h][\param']$
and hence are generically free over $\C[\param']$.
Together with (\ref{Lem:wc_Ringel}), this implies that $H_i(\WC_{c-\psi\leftarrow c}\Delta_c(\tau))=0$ for $i>0$
and Zariski generic $c$. It is easy to see that the modules $M=H_0(\WC_{c-\psi\leftarrow c}\Delta_c(\tau))$
and $M'=\nabla_{c-\psi}(\tau)$ satisfy the condition of Lemma \ref{Lem:K0_class_recover}
(this condition is preserved by degeneration from $\hat{c}$ to $c$). This implies the claim
about the classes in $K_0$.

(2) follows from (1) and the equality $[\nabla_{c-\psi}(\tau)]=[\Delta_{c-\psi}(\tau)]$
proved in \cite[Proposition 3.3]{GGOR}.
\end{proof}

\subsubsection{Pre-orders and classes of degenerations}
We will define a pre-order on $\Irr(W)$ refined by $\leqslant_c$. Namely, recall
that we have fixed a face $F$ of the chamber of $c$ and the associated
linear cone $F^0\subset \param_{\Q}$. Pick $c^0\in F^0\cap \underline{\param}_{\Z}$
to be Zariski generic (and in particular, sufficiently deep inside)
and replace $c$ with $c+c^0$. This leads
to the same pre-order $\leqslant_c$ and hence to the equivalent category
$\OCat_c(W)$, see Proposition \ref{Prop:cat_equi}. Let $\hat{c}$ denote a Weil generic
element in $\param'$.

We define a pre-order $\leqslant_F$ on $\Irr(W)$ by setting $\tau\leqslant_F \tau'$ if
$c^0_{\tau}-c^0_{\tau'}\in  \mathbb{Q}_{\geqslant 0}$. The choice of $c$ shows that
$\tau\leqslant_c \tau'$ implies $\tau\leqslant_F\tau'$. Moreover, if $\tau\sim_F \tau'$,
then $\tau\leqslant_c \tau'$ is equivalent to $\tau\leqslant_{\hat{c}}\tau'$.

We will need the following technical lemma.

\begin{Lem}\label{Lem:class_simple_degen}
Let $\tau\in \operatorname{Irr}(W)$. Then we have the following equality in $K_0(\mathcal{O}_c(W))=
K_0(\mathcal{O}_{\hat{c}}(W))$:
$$[L_{\hat{c}}(\tau)]=[L_c(\tau)]+\sum_{\xi<_F \tau}n_\xi [L_c(\xi)] $$
for some nonnegative integers $n_\xi$.
\end{Lem}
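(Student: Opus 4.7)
The plan is to prove the lemma by flatly deforming $L_{\hat c}(\tau)$ from the Weil-generic parameter $\hat c$ to the Zariski-generic parameter $c$ along $\param'$, and reading off the composition series of the special fibre.

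I pick a DVR $R$ with fraction field $K$ and residue field $\C$, equipped with a morphism $\operatorname{Spec} R\to\param'$ sending the generic point to $\hat c$ and the closed point to $c$. Inside the filtered deformation $\OCat_R(W):=\OCat_{\param'}(W)\otimes_{\C[\param']}R$ (a highest-weight $R$-category with $R$-free standards $\Delta_R(\tau)$ and costandards $\nabla_R(\tau)$) I lift $L_{\hat c}(\tau)$ as follows. Since $\nabla_{\hat c}(\tau)$ has simple socle $L_{\hat c}(\tau)$, the space $\Hom(P_{\hat c}(\tau),\nabla_{\hat c}(\tau))$ is one-dimensional with any nonzero morphism having image $L_{\hat c}(\tau)$, and correspondingly $\Hom_R(P_R(\tau),\nabla_R(\tau))$ is rank-one free over $R$. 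Pick a generator $\varphi$ and set $M_R:=\operatorname{image}\varphi$; this is a submodule of the $R$-free module $\nabla_R(\tau)$ over the DVR $R$, hence $R$-free, with generic fibre $L_{\hat c}(\tau)$. By $R$-freeness, $[M_c]=[L_{\hat c}(\tau)]$ in the common $K_0$ identified with $K_0(W\operatorname{-mod})$ via the Verma basis.

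The special fibre $M_c$ is a submodule of $\nabla_c(\tau)$, so every composition factor $L_c(\tau')$ satisfies $\tau'\leqslant_c\tau$, hence $\tau'\leqslant_F\tau$. Because $\nabla_c(\tau)$ has simple socle $L_c(\tau)$ and $M_c\ne 0$, the inequalities $1\leqslant[M_c:L_c(\tau)]\leqslant[\nabla_c(\tau):L_c(\tau)]=1$ force $[M_c:L_c(\tau)]=1$, giving an expansion $[L_{\hat c}(\tau)]=[L_c(\tau)]+\sum_{\xi\ne\tau,\,\xi\leqslant_F\tau}m_\xi[L_c(\xi)]$ with $m_\xi\geqslant 0$.

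The remaining and main step is to rule out contributions $L_c(\xi)$ with $\xi\sim_F\tau$, $\xi\ne\tau$. For this I project to the Serre subquotient $\OCat^{\sim_F\tau}$ of $\OCat(W)$ corresponding to the $\sim_F$-class of $\tau$. An elementary computation with the $c$-functions shows that within each $\sim_F$-class the orders $\leqslant_{\hat c}$ and $\leqslant_c$ coincide and that no wall $\Pi_{\tau',\xi'}$ with $\tau',\xi'\sim_F\tau$ separates $\hat c$ from $c$ inside $\param'$; hence the Serre subquotients $\OCat^{\sim_F\tau}_{\hat c}$ and $\OCat^{\sim_F\tau}_c$ are related by a label-preserving highest-weight equivalence (a variant of Proposition~\ref{Prop:cat_equi} applied at the subquotient level). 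Under this equivalence the image of $M_R$ in the $\sim_F$-layer specializes from $L_{\hat c}(\tau)$ to $L_c(\tau)$, forcing $m_\xi=0$ for all $\xi\sim_F\tau$, $\xi\ne\tau$, and completing the proof. The technical heart, and the main obstacle, is justifying this constancy of the $\sim_F$-subquotient along $\param'$.
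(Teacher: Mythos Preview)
Your flat-family setup via $M_R\subset\nabla_R(\tau)$ is a reasonable dual of the paper's approach, but there are two gaps.

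The minor one: the claim that $M_c$ embeds into $\nabla_c(\tau)$ requires $\nabla_R(\tau)/M_R$ to be $R$-torsion-free, which you have not checked (the cokernel of an inclusion of free $R$-modules over a DVR need not be free). This is fixable by saturating $M_R$ inside $\nabla_R(\tau)$, or by observing directly in $K_0$ that $[L_{\hat c}(\tau)]$ is a $\Z$-combination of $[\Delta(\xi)]$ with $\xi\sim_F\tau$ and $\xi\leqslant_{\hat c}\tau$ (hence $\xi\leqslant_c\tau$), and no such $\Delta_c(\xi)$ with $\xi\neq\tau$ contains $L_c(\tau)$.

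The real gap is your final step. You reduce the lemma to the claim that the Serre subquotients $\OCat_{\hat c}^{\sim_F\tau}$ and $\OCat_c^{\sim_F\tau}$ are label-preservingly highest-weight equivalent, invoking ``a variant of Proposition~\ref{Prop:cat_equi}.'' But that proposition needs $c\prec c'$ for the \emph{full} order on $\Irr(W)$, which fails here, and there is no general mechanism producing an equivalence of highest-weight subquotients from mere agreement of the restricted orders. Note that $\OCat_{\hat c}^{\sim_F\tau}$ is a block direct summand of $\OCat_{\hat c}$, whereas $\OCat_c^{\sim_F\tau}$ is a genuine subquotient whose structure depends on all of $\OCat_c$; establishing that they are equivalent is at least as hard as the lemma itself (and even then you would still owe compatibility of the equivalence with your specific family $M_R$). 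You yourself flag this as ``the main obstacle,'' and it remains unresolved.

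The paper sidesteps this entirely. It builds the flat family as an iterated quotient of $\Delta_{\param'}(\tau)$: at each stage one kills the image of
\[
\bigoplus_{\xi\in\Xi}\Hom_W\bigl(\xi,M_{\param'}(d_\xi)^{\h}\bigr)\otimes\Delta_{\param'}(\xi)\longrightarrow M_{\param'},
\]
where $\Xi=\{\xi\sim_F\tau:\xi<_{\hat c}\tau\}$ and $d_\xi=\hat c_\xi-\hat c_\tau$. The point is that $d_\xi$ is \emph{constant along} $\param'$ (this is exactly what $\xi\sim_F\tau$ buys), so the construction is uniform in the parameter. The resulting $M^k_c$ then satisfies $\Hom(\Delta_c(\xi),M^k_c)=0$ for every $\xi\in\Xi$ \emph{by construction}; since the projective covers $P_{c,<\tau}(\xi)$ in $\OCat_{c,<_c\tau}(W)$ are filtered by $\Delta_c(\xi')$ with $\xi'\in\Xi$, one reads off $[M^k_c:L_c(\xi)]=0$ for $\xi\in\Xi$ directly, with no appeal to any equivalence of subquotient categories.
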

\begin{proof}
Note that the equivalence classes for $\sim_F$ are unions of blocks for the category
$\mathcal{O}_{\hat{c}}(W)$. Indeed, if $\tau\not\sim_F\xi$, then $\hat{c}_\tau-\hat{c}_\xi\not\in \mathbb{Q}$. Moreover, if $\tau\sim_F\xi'$, then $c^0_{\xi}=c^0_\tau$ for all $c^0\in F^0$.
It follows that $\hat{c}_\xi-\hat{c}_\tau=c_\xi-c_\tau$ and hence is independent
of the choice of $\hat{c}$.

We will produce a module $M'_{\param'}$ in $\OCat_{\param'}(W)$ with the following properties:
the specialization $M'_{\hat{c}}$ coincides with $L_{\hat{c}}(\tau)$, while the specialization
at $c$ has the class in $K_0$ of the form predicted by the statement of the lemma.
This  $M'_{\param'}$ is obtained as a quotient of $\Delta_{\param'}(\tau)$ in several steps
to be described in the next paragraph.

Let $M_{\param'}$ be a module in $\OCat_{\param'}(W)$ and pick $\Xi\subset \Irr(W)$.
Suppose that $M_{\param'}$ is equipped with a $\Z_{\geqslant 0}$-grading $M_{\param'}=\bigoplus_i M_{\param'}(i)$
that is compatible with the grading on $H_{\param'}$.
Note that $M_{\param'}(i)$
is a finitely generated $\C[\param']$-module for each $i$. For each $\xi\in \Xi$, pick
$d_{\xi}\in \Z_{\geqslant 0}$.
The $\C[\param']$-module $\Hom_W(\xi, M_{\param'}(d_\xi)^\h)$ (the superscript indicates
all elements annihilated by $\h$) is finitely generated and hence is generically free. It follows that
$\dim \Hom_W(\xi, M_{\tilde{c}}(d_\xi)^\h)$ is generically constant for $\tilde{c}\in \param'$.
Also we can consider the natural homomorphism
\begin{equation}\label{eq:natur_homom}\bigoplus_{\xi\in \Xi}\Hom_W(\xi, M_{\param'}(d_\xi)^\h)\otimes_{\C[\param']}\Delta_{\param'}(\xi)
\rightarrow M_{\param'}.\end{equation}
This homomorphism preserves gradings if we put $\Hom_W(\xi, M_{\param'}(d_\xi)^\h)$
in degree $d_\xi$ and grade $\Delta_{\param'}(\xi)$ by assigning degree $0$
to $\C[\param']\otimes \xi$.   The  cokernel of (\ref{eq:natur_homom}), denote it by $M^1_{\param'}$, still lies in
$\OCat_{\param'}(W)$ and inherits a grading from $M_{\param'}$. So we can replace
$M_{\param'}$ with $M^1_{\param'}$.

Now let $\tau\in \Irr(W)$. We consider $\Xi=\{\xi\in \Irr(W)\mid \xi\sim_F \tau, \xi<_{\hat{c}}\tau\},
d_\xi=\hat{c}_\xi-\hat{c}_\tau$. Note that $d_\xi$ is independent of the choice of $\hat{c}\in \param'$.
Set $M_{\param'}:=\Delta_{\param'}(\tau)$.  For any quotient $\underline{M}$ of $M_{\param'}$,
a copy of $\xi$ annihilated by $\h$ must appear in degree $d_\xi$,
which is our reason to choose this value of $d_\xi$.
So we have an isomorphism $\Hom_W(\xi, \underline{M}(d_\xi)^\h)\xrightarrow{\sim}
\Hom_{H_{\param'}}(\Delta_{\param'}(\xi),\underline{M})$.
We  apply the construction in the previous paragraph
to $\Xi$ and $M_{\param'}$.  We get the quotient $M^1_{\param'}$ of $M_{\param'}$. We then apply the construction again
but now to $M^1_{\param'}$, getting its quotient $M^2_{\param'}$. The quotients will be proper
as long as $M^k_{\hat{c}}$ is not simple. We stop when $M^k_{\hat{c}}$
is simple, equivalently, $(M^k_{\hat{c}})^{\h}=\tau$.
The object $M^k_c$ satisfies $\Hom_{H_c}(\Delta_c(\xi), M^k_c)=0$ for any $\xi\in \Xi$.
Moreover $[M^k_c]=[M^k_{\hat{c}}]$ by the construction, and $M^k_c\twoheadrightarrow L_c(\tau)$.

Let us show that the kernel of $M^k_c\twoheadrightarrow L_c(\tau)$ is filtered with subquotients $L_c(\eta)$,
where $\eta<_F \tau$. Consider the Serre subcategory $\OCat_{c,<\tau}(W)$ spanned by the simples
$L_c(\tau'), \tau'<_c\tau$. By our choice of $c$ in the beginning
of the section, we have that $\Xi$ is a poset coideal in $\{\tau'\mid\tau'<_c\tau\}$.
The category $\OCat_{c,<\tau}(W)$ is a highest weight subcategory of $\OCat_c(W)$.
Therefore the projective objects $P_{c,<\tau}(\xi),\xi\in \Xi,$ in the category
$\OCat_{c,<\tau}(W)$ are filtered with $\Delta_c(\xi'),\xi'\in \Xi$.
Since $\Hom_{\OCat_{c,<\tau}(W)}(\Delta_c(\xi),M^k_c)=0$ for all $\xi\in \Xi$, we deduce that
$$\Hom_{\OCat_{c,<\tau}(W)}(P_{c,<\tau}(\xi),M^k_c)=0, \quad \forall \xi\in \Xi.$$
It follows that, for any simple composition factor $L_c(\tau')$ of $M_c^k$ different from
$\tau$, we have
$\tau'<_F\tau$. This completes the proof of the lemma.
\end{proof}

\subsubsection{Completion of the proof}\label{SSS_wc_proof_compl}
Now we are ready to prove Proposition \ref{Prop:wc_bij_comp}.

\begin{proof}[Proof of Proposition \ref{Prop:wc_bij_comp}]
Let us write $\OCat_{c, \leqslant_F \tau}(W)$ for the Serre subcategory
of $\OCat_c(W)$ spanned by the simples $L_c(\xi)$ with $\xi\leqslant_F \tau$.
We set $\OCat_{c,\sim_F\tau}(W):=\OCat_{c, \leqslant_F \tau}(W)/\OCat_{c, <_F \tau}(W)$.

Recall that $D^b(\OCat_{c, \leqslant_F \tau}(W))$  is a full subcategory
of $D^b(\OCat_c(W))$. Since $\WC_{c-\psi\leftarrow c}$ maps
$\Delta_c(\tau')$ to an object with class $[\nabla_{c-\psi}(\tau')]$,
it restricts to an equivalence $D^b(\OCat_{c, \leqslant_F \tau}(W))\xrightarrow{\sim}
D^b(\OCat_{c-\psi, \leqslant_F \tau}(W))$. This restriction  satisfies
(P1)-(P3) with respect to the filtrations on  $\OCat_{c, \leqslant_F \tau}(W),
\OCat_{c-\psi, \leqslant_F \tau}(W)$ restricted from the filtrations on $\OCat_{c}(W),
\OCat_{c-\psi}(W)$ making $\WC_{c-\psi\leftarrow c}$ perverse.
Then we have an  equivalence $$D^b(\OCat_{c, \sim_F \tau}(W))\xrightarrow{\sim}
D^b(\OCat_{c-\psi, \sim_F \tau}(W))$$ induced by $\WC_{c-\psi\leftarrow c}$.
This equivalence is the identity on the level of $K_0$
By Lemma \ref{Lem:class_simple_degen}, the classes of
$[L_c(\tau')],[L_{c-\psi}(\tau')]$ in these $K_0$'s are independent of $c$
(of course, as long as $c$ is Zariski generic). And the equivalence satisfies
(P1)-(P3) with respect to filtrations induced from $\OCat_{c, \leqslant_F \tau}(W),
\OCat_{c-\psi, \leqslant_F \tau}(W)$.

On the other hand, an easy induction together with $[\WC_{c-\psi\leftarrow c}]=\operatorname{id}$
shows that $K_0(\Cat^c_i)=K_0(\Cat^{c-\psi}_i)$ (an equality of subgroups
in $K_0(W\operatorname{-mod})$). It follows that,
for the filtration subquotients $\Cat^c_{\sim_F\tau, i},\Cat^{c-\psi}_{\sim_F\tau,i}$
of the categories $\OCat_{c,\sim_F\tau}(W),\OCat_{c-\psi,\sim_F\tau}(W)$,
we also have $K_0(\Cat^c_{\sim_F\tau, i})=K_0(\Cat^{c-\psi}_{\sim_F\tau,i})$.
Since the classes $[L_c(\tau')],[L_{c-\psi}(\tau')]$ are independent of $c$,
the equality $K_0(\Cat^c_{\sim_F\tau,i})=K_0(\Cat^{c-\psi}_{\sim_F\tau,i})$
implies that the labels of simples in $\Cat^c_{\sim_F\tau,i},\Cat^{c-\psi}_{\sim_F\tau,i}$
do not depend on $c$ as long as $c$ is Zariski generic. Since
$[\WC_{c-\psi\leftarrow c}]$ is independent of $c$ as well (it is always the identity),
we deduce that $\wc_{c-\psi\leftarrow c}$ is independent of $c$ too.
\end{proof}

\begin{Rem}
Let us discuss a connection between the filtration making $\WC_{c-\psi\leftarrow c}$
perverse and the filtration by dimension of support. Thanks to Lemma \ref{Lem:wc_Ringel}, on the set of labels, the former
for parameter  $c$ coincides with the latter but for parameter $\hat{c}$ in the notation
of the proof above. Also it may happen that two simples in $\OCat_c$ with the same
supports lie in different filtration pieces for $\WC_{c-\psi\leftarrow c}$, we will
see an example in Section \ref{SS_comput_example}.
\end{Rem}

\subsection{Wall-crossing vs induction and restriction}\label{SS_WC_vs_Res}
Here we are going to prove that the restriction functors intertwine
the wall-crossing functors.

\subsubsection{Shift bimodules and restriction}
Recall the shift bimodule $\B_{\param',\psi}$ from \ref{SSS_WC_constr}.
Our goal  is to understand the bimodule $(\B_{\param',\psi})_{\dagger,\underline{W}}$.
Consider the analog of $\B_{\param',\psi}$ for $\underline{W}$, the
$H_{\param'-\psi}(\underline{W})$-$H_{\param'}(\underline{W})$-bimodule
$\underline{\B}_{\param',\psi}$.

\begin{Lem}\label{Lem:shift_bimod_restr}
We have an isomorphism $(\B_{\param,\psi})_{\dagger,\underline{W}}\cong
\underline{\B}_{\param,\psi}$ of $H_{\param-\psi}(\underline{W})$-$H_{\param}(\underline{W})$-bimodules.
\end{Lem}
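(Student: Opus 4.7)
The plan is to reduce the claim to the elementary shift bimodules and then invoke the isomorphism of completions in Proposition~\ref{Prop:compl_iso1}. First, since $\bullet_{\dagger,\underline{W}}$ is exact and intertwines $\Tor_i$ (property 1 of \ref{SSS_res_HC_prop}), and since the factors $H_\param e$ and $eH_\param$ appearing in the iterated tensor product defining $\B_{\param',\psi}$ are projective over the spherical subalgebras (as follows from \cite[Theorem 3.1]{EG}), the higher $\Tor$'s vanish and $(\bullet)_{\dagger,\underline{W}}$ commutes with the relevant tensor products. It therefore suffices to prove that $(\B_{\param,\bar\chi})_{\dagger,\underline{W}} \cong \underline{\B}_{\param,\bar\chi}$ for a single character $\chi$ of $W$, where $\underline\chi := \chi|_{\underline{W}}$.

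For the elementary case, I would analyze each tensor factor of $\B_{\param,\bar\chi} = H_{\param+\bar\chi}e \otimes_{eH_{\param+\bar\chi}e} eH_{\param+\bar\chi}e_\chi \otimes_{eH_\param e} eH_\param$ separately. The outer factors are identity bimodules over $H_\param$ and its spherical subalgebra viewed from different sides; their compatibility with $\bullet_{\dagger,\underline{W}}$ is built into the construction of the restriction functor (via the remark at the end of \ref{SSS_ResO_constr} and its HC analog noted in the excerpt, namely that $(\B e_\chi)_{\dagger,\underline W} \cong \B_{\dagger,\underline W} \underline e_\chi$ and similarly from the left). The essential content is then concentrated in the middle factor $eH_{\param+\bar\chi}e_\chi$, for which I would use the completion $\theta_b$ at a point $b \in \h$ with stabilizer $\underline{W}$: by the discussion immediately following Proposition~\ref{Prop:compl_iso1}, $\theta_b$ restricts to an isomorphism $e_\chi H_\param^{\wedge_b} e_\chi \xrightarrow{\sim} \underline e_\chi H_\param^{\wedge_b}(\underline{W},\h)\underline e_\chi$, and an obvious bimodule analog identifies $(eH_{\param+\bar\chi}e_\chi)^{\wedge_b}$ with $(\underline e H_{\param+\bar\chi}(\underline{W},\h)\underline e_\chi)^{\wedge_b}$.

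The main obstacle is bookkeeping: verifying that these identifications descend to $\underline\h$-finite parts compatibly through the equivalence $\mathcal{F}: \HC(H_\param^{\wedge_b}(\underline{W},\h),\psi) \xrightarrow{\sim} \HC(H_\param(\underline{W},\underline\h),\psi)$ used to define $\bullet_{\dagger,\underline{W}}$, and that the element $\bar\chi \in \param$ projects to $\bar{\underline\chi} \in \underline\param$ under the natural map $\param \to \underline\param$. The latter is direct from the recipe in \ref{SSS_spherical}: the reflection hyperplanes of $\underline{W}$ are exactly those hyperplanes of $W$ passing through $b$, and the multiplicities $a_H$ determining $\bar\chi$ restrict unchanged. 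The former follows from the naturality of $\mathcal{F}$ combined with the rigidity in part (2) of Proposition~\ref{Prop:compl_iso1} (any two candidate $\theta_b$ differ by an inner automorphism that is trivial on associated graded), which forces the spherical isomorphism to commute with the shift $\iota$ of Lemma~\ref{Lem:spher_iso} up to an inner automorphism that is absorbed in choosing the matching.
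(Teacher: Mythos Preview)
Your proposal is correct and follows essentially the same approach as the paper: reduce to the elementary shift bimodules via the compatibility of $\bullet_{\dagger,\underline{W}}$ with tensor products, then use the completion isomorphism $\theta_b$ together with its rigidity (part~(2) of Proposition~\ref{Prop:compl_iso1}) to match the spherical shift isomorphisms $\iota$ and $\underline{\iota}$. The paper's Steps~1--2 spell out in more detail the passage to $\underline{h}$-finite parts and the adjustment of $\theta_b$ needed to make $\underline{\iota}'=\underline{\iota}$, which is exactly the bookkeeping obstacle you flag.
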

\begin{proof}
The proof is in several steps.

{\it Step 1}. Recall that we have isomorphisms $\iota: e_{\chi}H_{\param}e_\chi\xrightarrow{\sim}
e H_{\param+\bar{\chi}}e$ and $\underline{\iota}:\underline{e}_{\chi}H_{\param}(\underline{W})\underline{e}_\chi\xrightarrow{\sim}
\underline{e} H_{\param+\bar{\chi}}(\underline{W})\underline{e}$, see Lemma \ref{Lem:spher_iso}. Our goal is to relate
these isomorphisms. For this, we will first produce an isomorphism
$\underline{\iota}':\underline{e}_{\chi}H_{\param}(\underline{W})\underline{e}_\chi\xrightarrow{\sim}
\underline{e} H_{\param+\bar{\chi}}(\underline{W})\underline{e}$ from $\iota$.

The isomorphism $\iota$ induces an isomorphism
of completions $\iota^{\wedge_b}: e_{\chi}H_{\param}^{\wedge_b}e_\chi \cong e H^{\wedge_b}_{\param+\bar{\chi}}e$.
Using the isomorphism $\theta_b$ from Proposition \ref{Prop:compl_iso1},
we transfer $\iota^{\wedge_b}$ to an isomorphism $$\underline{e}_{\chi}H_{\param}^{\wedge_b}(\underline{W},\h)\underline{e}_\chi \cong
\underline{e} H^{\wedge_b}_{\param+\bar{\chi}}(\underline{W},\h)\underline{e}$$
to be denoted again  by $\iota^{\wedge_b}$. This isomorphism preserves the filtrations
and is the identity on the associated graded modulo $\param^*$.

We have inclusions $D(\h^{\underline{W}})^{\wedge_b}\hookrightarrow \underline{e}_{\chi}H_{\param}^{\wedge_b}(\underline{W},\h)\underline{e}_\chi,
\underline{e} H^{\wedge_b}_{\param+\bar{\chi}}(\underline{W},\h)\underline{e}$ (here $D(\bullet)$
stands for the algebra of differential operators).
These embeddings are strictly compatible with filtrations
and, after passing to the associated graded algebras, coincide with $\theta^0:\C[\h^{\underline{W}}\oplus
\h^{\underline{W}*}]^{\wedge_b}\hookrightarrow (\C[\h\oplus \h^*]^W)^{\wedge_b}$. One can show
that any two such embeddings differ by the conjugation with $\exp(\operatorname{ad} f)$,
where $f\in \C[\h/\underline{W}]^{\wedge_b}\otimes (\C\oplus \param^*)$, compare
to Section \ref{SS_compl}.
We can change $\theta_b$ and assume that $\iota^{\wedge_b}$ intertwines the embeddings
of  $D(\h^{\underline{W}})^{\wedge_b}$. So $\iota^{\wedge_b}$ restricts to an isomorphism
of the centralizers of $D(\h^{\underline{W}})^{\wedge_b}$, i.e., to
$\underline{\iota}':\underline{e}_{\chi}H_{\param}^{\wedge_0}(\underline{W})\underline{e}_\chi
\xrightarrow{\sim} \underline{e} H^{\wedge_0}_{\param+\bar{\chi}}(\underline{W})\underline{e}$.

Let $\underline{h}_\chi\in \underline{e}_{\chi}H_{\param}^{\wedge_0}(\underline{W})\underline{e}_\chi,
\underline{h}\in \underline{e} H^{\wedge_0}_{\param+\bar{\chi}}(\underline{W})\underline{e}$ denote the Euler elements.
Since the $\underline{\iota}^{\wedge_0}$ preserves the filtrations and is the identity
modulo $\param^*$ on the associated graded algebras, we see that
$\underline{\iota}'(\underline{h}_\chi)=\underline{h}+F$, where
$F\in \C[\underline{\h}/\underline{W}]^{\wedge_0}\otimes (\C\oplus \param^*)$.
From here and the fact that the eigenvalues of $\underline{h}$ on the augmentation ideal in
 $\C[\underline{\h}/\underline{W}]$ are positive we deduce that there is $F'\in \C[\underline{\h}/\underline{W}]^{\wedge_0}\otimes
(\C\oplus \param^*)$ such that $\underline{\iota}'(\underline{h}_\chi)-
\exp(\operatorname{ad}F')\underline{h}\in \C\oplus \param^*$. Twisting $\theta_b$ by
$\exp(\operatorname{ad}F')$ (where we view $\C[\underline{\h}/\underline{W}]^{\wedge_0}
$ as a subalgebra of $\C[\h/W]^{\wedge_b}$ using $(\theta^0)^{-1}$) we achieve that
$\underline{\iota}'$ intertwines $\operatorname{ad}(\underline{h}),
\operatorname{ad}(\underline{h}_\chi)$.  Since $\underline{e} H_{\param+\bar{\chi}}(\underline{W})\underline{e}$
is the space of $\operatorname{ad}(\underline{h})$-finite elements in
$\underline{e} H^{\wedge_0}_{\param+\bar{\chi}}(\underline{W})\underline{e}$
(and the similar claim holds for $\underline{e} H_{\param}(\underline{W})\underline{e}$),
we see that $\underline{\iota}'$ restricts to $
\underline{e}_\chi H_{\param}(\underline{W})\underline{e}_\chi\xrightarrow{\sim}\underline{e} H_{\param+\bar{\chi}}(\underline{W})\underline{e}$.

{\it Step 2}. Now we have two isomorphisms $\underline{\iota}, \underline{\iota}':\underline{e}_\chi H_{\param}(\underline{W})\underline{e}_\chi\xrightarrow{\sim}\underline{e} H_{\param+\bar{\chi}}(\underline{W})\underline{e}$.
Both preserve the filtrations and give the identity on the associated graded algebras
modulo $\param^*$. From  here we deduce that there is an element $f\in \C[\underline{\h}/\underline{W}]\otimes
(\C\oplus \param^*)$ such that $\underline{\iota}=\underline{\iota}'\circ \operatorname{ad}(f)$.
Further modifying $\theta_b$, we may assume that $\underline{\iota}=\underline{\iota}'$.


{\it Step 3}. It follows from Step 2 and the construction of the restriction functor
that $(e_\chi H_{\param'}e)_{\dagger,\underline{W}}\cong
\underline{e}_\chi H_{\param'}(\underline{W})\underline{e}$, an isomorphism
of $\underline{e}_\chi H_{\param'}(\underline{W})\underline{e}_\chi$-$\underline{e} H_{\param'}(\underline{W})\underline{e}$-bimodules.
From Step 1 we deduce that    $(e_\chi H_{\param'}e)_{\dagger,\underline{W}}\cong
\underline{e}_\chi H_{\param'}(\underline{W})\underline{e}$, an isomorphism of
$\underline{e} H_{\param'+\bar{\chi}}(\underline{W})\underline{e}$-$\underline{e} H_{\param'}(\underline{W})
\underline{e}$-bimodules.

{\it Step 4}. Since the functor $\bullet_{\dagger,\underline{W}}$ intertwines
all tensor products involved in the construction of $\B_{\param',\psi}$
we deduce that $(\B_{\param',\psi})_{\dagger,\underline{W}}\cong \underline{\B}_{\param',\psi}$.
\end{proof}

\subsubsection{Wall-crossing functors and restriction}
\begin{Prop}\label{Prop:WC_restr}
Suppose that $\hat{c},\hat{c}-\psi$ lie in opposite chambers for both $\underline{W}$
and $W$ provided $\hat{c}$ is Weil generic in $\param'$. Then, for a Zariski generic
$c\in \param'$, we have a natural isomorphism of functors $$\underline{\WC}_{c-\psi\leftarrow c}\circ
\Res^W_{\underline{W}}\cong \Res^W_{\underline{W}}\circ \WC_{c-\psi\leftarrow c}.$$
Here we write $\underline{\WC}_{c-\psi\leftarrow c}$ for the wall-crossing functor
$D^b(\OCat_c(\underline{W}))\rightarrow D^b(\OCat_{c-\psi}(\underline{W}))$.
\end{Prop}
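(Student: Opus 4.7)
The plan is to convert the desired isomorphism of functors into an isomorphism of HC bimodules and then compare the defining filtrations via Lemma~\ref{Lem:shift_bimod_restr}. By Property~(2) of Section~\ref{SSS_res_HC_prop}, for any HC $H_{c-\psi}$-$H_c$-bimodule $\B$ one has
$$
\Res^W_{\underline{W}}(\B\otimes^L_{H_c}M)\cong \B_{\dagger,\underline{W}}\otimes^L_{H_c(\underline{W})}\Res^W_{\underline{W}}(M).
$$
Applied to $\B=\B_c(\psi)$, this reduces the proposition to producing a natural isomorphism of HC bimodules
$$
(\B_c(\psi))_{\dagger,\underline{W}}\cong \underline{\B}_c(\psi),
$$
where $\underline{\B}_c(\psi)$ denotes the $\underline{W}$-analog of the wall-crossing bimodule constructed in Section~\ref{SSS_WC_constr}; the hypothesis on $\hat c$ and $\hat c-\psi$ guarantees that this $\underline{W}$-construction makes sense at Weil generic parameters.

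Next I transport the filtration. Since $\bullet_{\dagger,\underline{W}}$ is exact and, by Lemma~\ref{Lem:shift_bimod_restr}, $(\B_{\param',\psi})_{\dagger,\underline{W}}\cong \underline{\B}_{\param',\psi}$, applying $\bullet_{\dagger,\underline{W}}$ to the defining filtration $\{0\}\subset \B^1\subset \B^2\subset \B_{\param',\psi}$ gives a filtration
$$
\{0\}\subset (\B^1)_{\dagger,\underline{W}}\subset (\B^2)_{\dagger,\underline{W}}\subset \underline{\B}_{\param',\psi}.
$$
By the uniqueness statement of Lemma~\ref{Lem:wc_well_def} applied to $\underline{W}$, it then suffices to verify that at a Weil generic $\hat c\in\param'$ the specializations of the three successive pieces have the correct behavior: the outer pieces $(\B^1)_{\dagger,\underline{W}}|_{\hat c}$ and $(\B_{\param',\psi}/\B^2)_{\dagger,\underline{W}}|_{\hat c}$ should have proper associated varieties in $(\underline{\h}\oplus\underline{\h}^*)/\underline{W}$, while the middle piece $(\B^2/\B^1)_{\dagger,\underline{W}}|_{\hat c}$ should be a simple HC bimodule with full associated variety.

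The $\VA$ conditions follow from the slice formula of Section~\ref{SSS_res_HC_prop}(3) combined with a dimension count: slicing a closed subvariety of $(\h\oplus\h^*)/W$ transverse to $(\h\oplus\h^*)^{\underline{W}}$ decreases its dimension by $2(\dim\h-\dim\h^{\underline{W}})$, so proper stays proper and full stays full. The delicate point, which I expect to be the main obstacle, is the simplicity of $(\B^2/\B^1)_{\dagger,\underline{W}}|_{\hat c}$, since restrictions of simple HC bimodules need not be simple in general. This will be handled by adapting the argument in the proof of Lemma~\ref{Lem:wc_well_def}: compare the transported filtration $(\B^i)_{\dagger,\underline{W}}$ with the intrinsic $\underline{W}$-filtration $\underline{\B}^i$ by forming the sums $\tilde{\B}^1:=(\B^1)_{\dagger,\underline{W}}+\underline{\B}^1$, $\tilde{\B}^2:=(\B^2)_{\dagger,\underline{W}}+\underline{\B}^1$ and their $\underline{W}$-mirrors, then use the $\VA$ estimates of the previous step together with the simplicity of $\underline{\B}^2/\underline{\B}^1|_{\hat c}$ (which holds by the very definition of $\underline{\B}_{\hat c}(\psi)$) to identify $(\B^2/\B^1)_{\dagger,\underline{W}}|_{\hat c}$ with $\underline{\B}_{\hat c}(\psi)$. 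Spreading this isomorphism from Weil generic $\hat c$ to Zariski generic $c\in\param'$ by the usual $\param$-support argument then yields the required bimodule identification and completes the proof.
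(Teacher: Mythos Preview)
Your strategy matches the paper's: reduce via property~(2) of Section~\ref{SSS_res_HC_prop} to the bimodule identity $(\B_c(\psi))_{\dagger,\underline{W}}\cong\underline{\B}_c(\psi)$, transport the filtration $\B^1\subset\B^2\subset\B_{\param',\psi}$ through $\bullet_{\dagger,\underline{W}}$ using Lemma~\ref{Lem:shift_bimod_restr}, and then invoke the well-definedness statement of Lemma~\ref{Lem:wc_well_def} for $\underline{W}$ to pass from Weil generic to Zariski generic $c$. The $\VA$ arguments you give for the outer and middle pieces are correct.

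The gap is exactly where you flag it. Your proposed adaptation of Lemma~\ref{Lem:wc_well_def} does not close as written: the proof of that lemma uses simplicity of \emph{both} middle pieces at Weil generic $\hat c$ to conclude that the epimorphisms onto the common refinement are isomorphisms. You have simplicity only on the intrinsic side $\underline{\B}^2_{\hat c}/\underline{\B}^1_{\hat c}$; for $(\B^2/\B^1)_{\dagger,\underline{W}}|_{\hat c}$ you only know the associated variety is full, and that alone does not prevent extra composition factors with proper $\VA$. So the surjection $(\B^2/\B^1)_{\dagger,\underline{W}}|_{\hat c}\twoheadrightarrow\tilde{\B}^2_{\hat c}/\tilde{\B}^1_{\hat c}$ could have nonzero kernel, and the comparison breaks down. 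The paper does not argue this step internally either: it imports the isomorphism $(\B_{\hat c}(\psi))_{\dagger,\underline{W}}\cong\underline{\B}_{\hat c}(\psi)$ at Weil generic $\hat c$ from \cite[Proposition~4.10]{rouq_der}, and only then applies Lemma~\ref{Lem:wc_well_def} to spread to Zariski generic $c$. In short, your outline is the right one, but the ``delicate point'' genuinely requires that external input rather than the sum-and-intersect trick you sketch.
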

\begin{proof}
It follows from Lemma \ref{Lem:shift_bimod_restr} that
$(\B_{c,\psi})_{\dagger,\underline{W}}=\underline{\B}_{c,\psi}$.
By \cite[Proposition 4.11]{rouq_der}, we get $\underline{\B}_{\hat{c}}(\psi)\cong
(\B_{\hat{c}}(\psi))_{\dagger,\underline{W}}$. So we can set
$\underline{\B}_{\param'}(\psi)=(\B^2/\B^1)_{\dagger,\underline{W}}=
(\B^2)_{\dagger,\underline{W}}/(\B^1)_{\dagger,\underline{W}}$,
where $\B^1,\B^2$ are as in  \ref{SSS_WC_constr}.
This shows $\underline{\B}_{c}(\psi)=(\B_c(\psi))_{\dagger,\underline{W}}$.
Using 2) from \ref{SSS_res_HC_prop}, we complete the proof.
\end{proof}

The next corollary follows from Proposition \ref{Prop:WC_restr} and the adjointness
properties of $\Res^W_{\underline{W}}$ and $\Ind^W_{\underline{W}}$.

\begin{Cor}\label{Cor:WC_restr}
Under the assumptions of Proposition \ref{Prop:WC_restr},
we have $$\WC_{c-\psi\leftarrow c}\circ
\Ind^W_{\underline{W}}\cong \Ind^W_{\underline{W}}\circ \underline{\WC}_{c-\psi\leftarrow c}.$$
\end{Cor}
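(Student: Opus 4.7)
The plan is to deduce the induction statement from the restriction statement by a formal adjunction argument. By Proposition~\ref{Prop:WC_restr} we have a natural isomorphism
\[
\underline{\WC}_{c-\psi\leftarrow c}\circ \Res^W_{\underline{W}}\;\cong\;\Res^W_{\underline{W}}\circ \WC_{c-\psi\leftarrow c}
\]
of functors $D^b(\OCat_c(W))\to D^b(\OCat_{c-\psi}(\underline{W}))$. By Proposition~\ref{Prop:der_equi}, both $\WC_{c-\psi\leftarrow c}$ and $\underline{\WC}_{c-\psi\leftarrow c}$ are equivalences of triangulated categories, so they admit (quasi-)inverses $\WC_{c\leftarrow c-\psi}$ and $\underline{\WC}_{c\leftarrow c-\psi}$ which serve as both left and right adjoints. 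Composing on the left and right with these inverses rearranges the above isomorphism into
\[
\Res^W_{\underline{W}}\circ \WC_{c\leftarrow c-\psi}\;\cong\;\underline{\WC}_{c\leftarrow c-\psi}\circ \Res^W_{\underline{W}}.
\]

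Now I would pass to right adjoints on both sides. Recall from Section~\ref{SS_IndRes} that $\Ind_W^{\underline{W}}$ is both left and right adjoint to $\Res^W_{\underline{W}}$, while the right adjoint of an equivalence is its inverse. Using that taking right adjoints reverses the order of composition, we get
\[
\WC_{c-\psi\leftarrow c}\circ \Ind_W^{\underline{W}}\;\cong\;\Ind_W^{\underline{W}}\circ \underline{\WC}_{c-\psi\leftarrow c},
\]
which is exactly the desired isomorphism. (Equivalently, one can use that $\Ind$ is the left adjoint of $\Res$ and pass to left adjoints — either choice works since $\Ind$ is biadjoint to $\Res$ and the wall-crossing functors are equivalences.)

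The only thing to verify is that the assumption of Proposition~\ref{Prop:WC_restr} — namely that for Weil generic $\hat c\in\param'$ both $\hat c$ and $\hat c-\psi$ lie in opposite chambers for $W$ and for $\underline{W}$ — is indeed sufficient to guarantee that the restriction–wall-crossing isomorphism specializes to our Zariski generic $c$, because only then is $\WC_{c-\psi\leftarrow c}$ defined and an equivalence. This is automatic once we are in the setting of Proposition~\ref{Prop:WC_restr}, so no further work is needed. The argument is essentially formal; there is no genuine obstacle, the content sits entirely in Proposition~\ref{Prop:WC_restr}.
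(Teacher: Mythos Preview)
Your argument is correct and is exactly the approach the paper indicates: the corollary is stated to follow from Proposition~\ref{Prop:WC_restr} together with the (bi)adjointness of $\Res^W_{\underline{W}}$ and $\Ind_W^{\underline{W}}$, and you have simply written out this formal adjunction step in detail.
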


\subsubsection{Assumptions on $\hat{c}-\psi,\hat{c}$}\label{SSS_essent_walls}
We would like to make some comments on the assumption in Proposition \ref{Prop:WC_restr}
that $\hat{c}-\psi,\hat{c}$ are opposite for $\underline{W}$. We basically need to impose this
assumption because the decomposition into chambers that we have chosen (according to
the c-function) is too fine. We can choose a rougher decomposition, for example, as follows.
Recall that we have fixed a coset $c+\underline{\param}_{\Z}$. Let $\Pi_{\tau,\xi}$
be a hyperplane as in Section \ref{SS_chamber}. We say that $\Pi_{\tau,\xi}$ is
{\it essential} (for $c+\underline{\param}_{\Z}$) if, for a Weil generic $\hat{c}\in c+\Pi_{\tau,\xi}$, the category
$\mathcal{O}_{\hat{c}}$ is not semisimple.

\begin{Lem}\label{Lem:non_essent_cross}
Let $c,c-\psi$ lie in chambers that share a wall spanning a non-essential hyperplane.
Then $\WC_{c-\psi\leftarrow c}$ is a highest weight equivalence.
\end{Lem}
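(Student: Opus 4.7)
The plan is to apply Lemma \ref{Lem:wc_Ringel} at a Weil generic parameter in $\param'$ and then specialize to $c$, using the non-essentiality of the wall to control the perversity filtration from Proposition \ref{Prop:perverse}.

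First I would verify that $\OCat_{\hat c}$ is semisimple for a Weil generic $\hat c\in\param'$. Since $c$ lies in the interior of an open chamber it is on no hyperplane $\Pi_{\sigma,\rho}$, so the affine subspace $\param'=c+V_F$ (with $V_F$ the vector space spanned by $F$) is contained in no such hyperplane, and a Weil generic $\hat c\in\param'$ avoids every $\Pi_{\sigma,\rho}$. Consequently $\hat c_\sigma-\hat c_\rho\notin\Q$ whenever $\sigma\not\sim\rho$, the order $\leqslant_{\hat c}$ is trivial, and $\OCat_{\hat c}$ is semisimple. In a semisimple highest weight category standards, simples and costandards all agree, so $\OCat_{\hat c}^\vee=\OCat_{\hat c}$ with the same labelling and Ringel duality is the identity. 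Lemma \ref{Lem:wc_Ringel} then tells us that $\WC_{\hat c-\psi\leftarrow\hat c}$ is itself a labelling-preserving highest weight equivalence $\OCat_{\hat c}\xrightarrow{\sim}\OCat_{\hat c-\psi}$ with $\Delta_{\hat c}(\tau)\mapsto\Delta_{\hat c-\psi}(\tau)$.

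Next I would descend this to $c$. The family $\B_{\param'}(\psi)\otimes^{L}_{H_{\param'}}\Delta_{\param'}(\tau)\in D^b(\OCat_{\param'-\psi})$ has Weil generic fibre $\Delta_{\hat c-\psi}(\tau)$, so it agrees with the tautologically flat family $\Delta_{\param'-\psi}(\tau)=H_{\param'-\psi}\otimes_{S(\h)\# W}\tau$ on a Zariski dense open subset of $\param'$. To conclude agreement at $c$, and hence the desired labelling-preserving highest weight equivalence (invoking Proposition \ref{Prop:der_equi} and the fact that standards generate $D^b(\OCat)$), it suffices to show that the derived tensor product at $c$ is concentrated in cohomological degree zero.

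This concentration in degree zero is the main obstacle, and it is where the non-essentiality hypothesis enters through Proposition \ref{Prop:perverse}. By (P2) of that proposition applied with $i=0$, any higher cohomology $H_j(\WC_{c-\psi\leftarrow c}\Delta_c(\tau))$ with $j>0$ lies in $\Cat^{c-\psi}_1$, so it suffices to prove $\Cat^{c-\psi}_1=0$. I would do this by deforming to a Weil generic point $c_0$ of the non-essential hyperplane containing the wall: by hypothesis $\OCat_{c_0}$ is semisimple, forcing the ideals $I^{c_0}_{n-1},\ldots,I^{c_0}_0$ all to equal $H_{c_0}$ and hence $\Cat^{c_0}_i=0$ for $i>0$. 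A Zariski-generic specialisation argument as in the proof of Lemma \ref{Lem:wc_well_def}, applied to the family of ideals $I^{\,\cdot}_i$ over a two-parameter family of parameters containing both $c_0$ and $c$, should then propagate this vanishing to $c$ and complete the proof.
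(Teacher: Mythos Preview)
Your first paragraph contains a genuine error. You argue that a Weil generic $\hat c\in\param'$ avoids every $\Pi_{\sigma,\rho}$, and conclude that $\hat c_\sigma-\hat c_\rho\notin\Q$ for all $\sigma\not\sim\rho$, hence the order is trivial and $\OCat_{\hat c}$ is semisimple. But avoiding $\Pi_{\sigma,\rho}=\{c':c'_\sigma=c'_\rho\}$ only gives $\hat c_\sigma\neq\hat c_\rho$, not irrationality of the difference. Indeed, if $\Pi=\Pi_{\tau,\xi}$ is the wall hyperplane, then $\param'=c+\Pi$ and the function $c'\mapsto c'_\tau-c'_\xi$ is constant along $\param'$, equal to the rational number $c_\tau-c_\xi$. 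Your argument never invokes the non-essential hypothesis and, if it were valid, would show $\OCat_{\hat c}$ is semisimple across \emph{every} wall---so every wall would be non-essential, which is false.

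The conclusion that $\OCat_{\hat c}$ is semisimple for Weil generic $\hat c\in\param'$ is nonetheless correct: since $\param'=c+\Pi$, this is exactly the definition of the wall being non-essential. Once you have this, the paper's route is shorter and avoids the vague two-parameter deformation you propose in the last paragraph. Semisimplicity of $\OCat_{\hat c}$ forces the algebra $H_{\hat c}$ to be simple (every primitive ideal annihilates a simple in $\OCat_{\hat c}$, \cite{Ginzburg_prim}), so $I^{\hat c}_{n-1}=H_{\hat c}$ and likewise $I^{\hat c-\psi}_{n-1}=H_{\hat c-\psi}$. The chains $I^{\param'}_i$, $I^{\param'-\psi}_i$ are already defined as families over $\param'$ (see \ref{SSS_WC_properties}), so Zariski-generic specialisation gives $I^c_{n-1}=H_c$ and $I^{c-\psi}_{n-1}=H_{c-\psi}$ directly---no detour through a separate point $c_0$ on the wall is needed. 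The perversity filtrations are then trivial, so $\WC_{c-\psi\leftarrow c}$ is an equivalence of abelian categories; being the identity on $K_0$ (Proposition~\ref{Prop:WC_K0}), it is a highest weight equivalence by \cite[Lemma 4.3.2]{GL}. Your attempt in the second paragraph to match $\WC_{c-\psi\leftarrow c}\Delta_c(\tau)$ with $\Delta_{c-\psi}(\tau)$ as flat families is not needed once the filtration collapses.
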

\begin{proof}
First of all, let us observe that if the category $\OCat_{\hat{c}}(W)$ is semisimple,  then the algebra
$H_{\hat{c}}$ is simple. If $\OCat_{\hat{c}}(W)$ is semisimple, then every Verma module is simple, hence the annihilator
of every simple module in $\mathcal{O}_{\hat{c}}(W)$ is zero.  Every
prime ideal in $H_{\hat{c}}$ is primitive, while every primitive ideal is
the annihilator of a simple module from $\OCat_{\hat{c}}(W)$, see \cite{Ginzburg_prim}.
We conclude that the only prime ideal in $H_{\hat{c}}$ is zero. It follows that $H_{\hat{c}}$
contains no nontrivial two-sided ideals.

It follows that $I_{n-1}^{\hat{c}}=H_{\hat{c}}$
and $I_{n-1}^{\hat{c}-\psi}=H_{\hat{c}}$. So $I_{n-1}^c=H_c$ and $I_{n-1}^{c-\psi}=H_{c-\psi}$.
Therefore $\WC_{c-\psi\leftarrow c}$ is an equivalence of abelian categories.

It remains to check that $\WC_{c-\psi\leftarrow c}\Delta_c(\tau)=\Delta_{c-\psi}(\tau)$ for all
$\tau$. The functor $\WC_{c-\psi\leftarrow c}$ is the identity on $K_0$ by Proposition \ref{Prop:WC_K0}.
So it maps the projective Vermas in $\OCat_{c}(W)$ to the projective Vermas in  $\OCat_{c-\psi}(W)$
and those have the same labels in the two categories. Then we restrict  $\WC_{c-\psi\leftarrow c}$
to the Serre spans of the remaining Vermas and repeat the argument.
\end{proof}

So we can partition $c+\underline{\param}_{\Z}$ into chambers using essential
hyperplanes only. It is still unclear whether the condition that $\hat{c},\hat{c}-\psi$
lie in opposite chambers for $W$ implies that for $\underline{W}$, in general.
However, this is true when we consider chambers separated by a single
essential hyperplane.

\begin{Prop}\label{Prop:WC_restr_essent}
Let $c\in \param$, $\Pi$ be an essential wall for $c+\underline{\param}_{\Z}$ and let $\psi\in
\underline{\param}_{\Z}$ be such that $c,c-\psi$ are separated by $\Pi$ and are not separated
by any other essential wall. Then, after replacing
$c$ with a Zariski generic element of $c+(\Pi\cap \underline{\param}_{\Z})$, we have that either
$\Pi$ is an essential wall for $\underline{W}$ or $c,c-\psi$ are not separated by any essential
wall for $\underline{W}$. Moreover, we have a natural isomorphism of functors
$$\underline{\WC}_{c-\psi\leftarrow c}\circ \Res^W_{\underline{W}}\cong \Res^W_{\underline{W}}\circ \WC_{c-\psi\leftarrow c}.$$
\end{Prop}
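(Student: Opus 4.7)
The plan has two parts: first establish the dichotomy on $\underline{W}$-essential walls, then prove the natural isomorphism. For the isomorphism, Case~1 ($\Pi$ essential for $\underline{W}$) reduces immediately to Proposition~\ref{Prop:WC_restr}, while Case~2 ($\Pi$ not essential for $\underline{W}$) requires an extension of its argument to handle the degenerate ``wall-crossing'' on the $\underline{W}$-side.

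For the dichotomy, I would analyze the finite $\underline{W}$-essential wall arrangement restricted to $c+\underline{\param}_{\Z}$. After replacing $c$ by a suitably generic deep element of $c+(\Pi\cap\underline{\param}_{\Z})$, the key geometric claim is that any $\underline{W}$-essential hyperplane $\Pi'\neq\Pi$ that separates the new $c$ from $c-\psi$ would have to contain all of $c+\Pi$, which is impossible since $c\notin\Pi$ forces $\mathrm{span}(c+\Pi)=\param$ and $\Pi'$ has codimension one. Thus either $\Pi$ itself is a $\underline{W}$-essential wall (Case~1) or the segment from $c$ to $c-\psi$ crosses no $\underline{W}$-essential wall (Case~2). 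In Case~1 this places $c,c-\psi$ in opposite $\underline{W}$-chambers separated by $\Pi$, so the hypothesis of Proposition~\ref{Prop:WC_restr} is satisfied and it yields the natural isomorphism.

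In Case~2, I would mimic the proof of Proposition~\ref{Prop:WC_restr} with an added structural input. Lemma~\ref{Lem:shift_bimod_restr} still gives $(\B_{c,\psi})_{\dagger,\underline{W}}\cong\underline{\B}_{c,\psi}$, so by exactness of the restriction functor the object $(\B_c(\psi))_{\dagger,\underline{W}}=(\B^2)_{\dagger,\underline{W}}/(\B^1)_{\dagger,\underline{W}}$ is a subquotient of $\underline{\B}_{c,\psi}$. Since $\Pi$ is not essential for $\underline{W}$, the categories $\OCat_{\hat{c}}(\underline{W})$ and $\OCat_{\hat{c}-\psi}(\underline{W})$ are semisimple for Weil generic $\hat{c}$ on $\Pi$, so the algebras $H_{\hat{c}}(\underline{W})$ and $H_{\hat{c}-\psi}(\underline{W})$ are simple (by the argument in the proof of Lemma~\ref{Lem:non_essent_cross}). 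Each spherical factor appearing in the iterated construction of $\underline{\B}_{\hat{c},\psi}$ therefore implements a Morita equivalence between simple algebras, which should force $\underline{\B}_{\hat{c},\psi}$ to be itself a simple HC bimodule of full associated variety. Hence the analogue of the ``$\B^2/\B^1$'' construction for $\underline{W}$ collapses: $\underline{\B}_c(\psi)=\underline{\B}_{c,\psi}$. On the other hand, item~3 of Section~\ref{SSS_res_HC_prop} shows that the Weil generic fiber of $(\B_c(\psi))_{\dagger,\underline{W}}$ has full associated variety, and by simplicity of $\underline{\B}_{\hat{c},\psi}$ this subquotient must equal $\underline{\B}_{\hat{c},\psi}$ at the generic point; closedness of $\param$-supports (Section~\ref{SSS_param_supp}) then transfers this identification to Zariski generic $c$. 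Item~2 of Section~\ref{SSS_res_HC_prop} then converts the bimodule identity $(\B_c(\psi))_{\dagger,\underline{W}}\cong\underline{\B}_c(\psi)$ into the desired isomorphism of functors.

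The hard part will be the structural input in Case~2: that $\underline{\B}_{\hat{c},\psi}$ is a \emph{simple} HC bimodule of full associated variety over the pair of simple algebras $H_{\hat{c}}(\underline{W})$, $H_{\hat{c}-\psi}(\underline{W})$. This amounts to checking that each spherical tensor factor is a progenerator implementing a Morita equivalence and that the iterated bimodule remains simple on specialization. A secondary technicality is arranging the several genericity conditions consistently: the dichotomy step, the Weil generic specializations needed for the construction of $\B_c(\psi)$ and its $\underline{W}$-analogue, and the Zariski generic specialization at which one wants the final isomorphism must all be compatible.
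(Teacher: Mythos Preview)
Your proposal is correct and follows essentially the same route as the paper: split into the two cases, invoke Proposition~\ref{Prop:WC_restr} directly in Case~1, and in Case~2 use simplicity of $H_{\hat c}(\underline W)$ (from the proof of Lemma~\ref{Lem:non_essent_cross}) to force $(\B^1)_{\dagger,\underline W}$ and $(\B_{\param',\psi}/\B^2)_{\dagger,\underline W}$ to be torsion, yielding $(\B_c(\psi))_{\dagger,\underline W}\cong\underline{\B}_{c,\psi}=\underline{\B}_c(\psi)$. Your write-up is more explicit than the paper's on two points (the dichotomy and why $\underline{\B}_{\hat c,\psi}$ is simple via Morita equivalence through full idempotents), and the ``hard part'' you flag is in fact routine once you know the algebras are simple: $H$ simple forces $HeH=H$, so each spherical step is Morita and the tensor product is a simple bimodule.
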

\begin{proof}
If $\Pi$ is an essential wall for $\underline{W}$, then our claim follows from
Proposition \ref{Prop:WC_restr}. Assume that $\Pi$ is not an essential wall
for $\underline{W}$. From the proof of Lemma \ref{Lem:non_essent_cross} it follows
that $\underline{\B}_c(\psi)=\underline{\B}_{c,\psi}$. Also it follows
that the bimodules $\B^1_{\dagger,\underline{W}},(\B_{c+\Pi,\psi}/\B^2)_{\dagger,\underline{W}}$
are torsion over $\C[\Pi]$. Therefore $\B_c(\psi)_{\dagger,\underline{W}}=\underline{\B}_c(\psi)$.
The required isomorphism of functors follows from 2) of \ref{SSS_res_HC_prop}.
\end{proof}

\begin{Rem}\label{Rem:walls_separation}
Let us explain how to determine which essential walls separate two parameters
$c,c'$ with $c'-c\in \underline{\param}_{\Z}$. Let $\Pi_1,\ldots,\Pi_k$ be all
essential walls for $c$ (equivalently, for $c'$). Let $\varphi_1,\ldots\varphi_k\in
\param_{\Q}^*$ be functionals defining $\Pi_1,\ldots,\Pi_k$. Then $c,c'$ are
separated exactly by the walls $\Pi_i$ such that $\varphi_i(c),\varphi_i(c')$
have different signs.
\end{Rem}

In any case, suppose that $\hat{c},\hat{c}-\psi$ lie in opposite chambers for
$W$. Then one can show that $\OCat_{\hat{c}-\psi}(\underline{W})\cong \OCat_{\hat{c}}(\underline{W})^\vee$
and the functor $$\underline{\B}_{\hat{c}-\psi\leftarrow \hat{c}}\otimes^L_{H_{\hat{c}}(\underline{W})}\bullet$$
becomes the inverse Ringel duality under this identification. In particular,
this functor becomes $\underline{\WC}_{\hat{c}-\underline{\psi},\hat{c}}$ up to pre-composing
with a highest weight equivalence, where $\underline{\psi}$
is such that $\hat{c}, \hat{c}-\underline{\psi}$ lie in opposite chambers for
$\underline{W}$. We do not prove this claim as we do not need it.

\section{Cyclotomic categories $\mathcal{O}$}\label{S_cyclot}
\subsection{Cyclotomic Cherednik algebras and their categories $\mathcal{O}$}\label{SS_cyclot_O}
We now concentrate on the case when $W=G(\ell,1,n)$. Here we have either one
(if $\ell=1$ or $n=1$) or two conjugacy classes of reflection hyperplanes.
Let $H^1$ denote the hyperplane $x_1=0$ and $H^2$ denote the hyperplane
$x_1=x_2$. We set $\kappa=-c(s)$, where $s$ is a transposition in $\mathfrak{S}_n\subset W$.
If $\kappa=0$, then $H_{c}=H_c(1)^{\otimes n}\#S_n$, where we write $H_c(1)$
for the rational Cherednik algebra corresponding to $(\Z/\ell\Z,\C)$. This is a very easy
case and we are not going to consider it here (we explain how to compute
the supports in Section \ref{S_App_kappa0}). So we assume that $\kappa\neq 0$.
Define complex numbers $s_i, i=1,\ldots,\ell,$ by $h_{H^1,i}=\kappa s_i-\frac{i}{\ell}$
(below we will write $h_i$ for $h_{H^1,i}$).
We view $(s_1,\ldots,s_\ell)$ as an $\ell$-tuple of complex numbers defined
up to a common summand.

Below we will  write $\OCat_c(n)$ for the category $\OCat_c(G(\ell,1,n))$.
We will write $\OCat_c$ for the direct sum $\bigoplus_{n\geqslant 0}\OCat_c(n)$.

\subsubsection{$\Irr(W)$ vs multipartitions}
Let us proceed to the combinatorial description of the set $\Irr(W)$.
We can identify this set with the set $\mathcal{P}_\ell(n)$ of $\ell$-multipartitions
of $n$ as follows. To a multipartition $\lambda$, we assign the irreducible
$\prod_{i=1}^\ell G(|\lambda^{(i)}|,1,\ell)$-module $\bigotimes_{i=1}^\ell S_{\lambda^{(i)}}$,
where an element $\eta$ in any copy of $\Z/\ell \Z$ inside $G(|\lambda^{(i)}|,1,\ell)$
acts on  the irreducible $\Sym_{|\lambda^{(i)}|}$-module $S_{\lambda^{(i)}}$ (corresponding to the diagram $\lambda^{(i)}$)
by $\exp(2\pi\sqrt{-1}i\eta/\ell)$. The irreducible $G(\ell,1,n)$-module corresponding
to $\lambda$ is obtained from $\bigotimes_{i=1}^\ell S_{\lambda^{(i)}}$
by induction.

\subsubsection{Blocks and order on boxes}\label{SSS_blocks_order}
Now we are going to describe the blocks of $\OCat_c$. To a box $b$ with column number $x$,
row number $y$ and diagram number $i$ we assign its shifted content $\cont(b):=x-y+s_i$.
The following is a reformulation of \cite[Lemma 5.16]{SV}.

\begin{Lem}\label{Lem:blocks}
Two multipartitions $\lambda$ and $\lambda'$ lie in the same block if and only if  the multisets
$\{\cont(b) \mod \kappa^{-1}\Z, b\in\lambda\}$ and $\{\cont(b)\mod \kappa^{-1}\Z,
b\in \lambda'\}$  are the same.
\end{Lem}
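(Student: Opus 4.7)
My plan is to derive the lemma from the categorical $\hat{\slf}_e$-action on $\OCat_c = \bigoplus_n \OCat_c(n)$ to be recalled in Section \ref{SS_cat_act}. That action provides exact biadjoint endofunctors $E_i$ and $F_i$, indexed by residues $i \in \C/\kappa^{-1}\Z$, whose effect on the Fock-space identification of $K_0(\OCat_c)$ is to remove (respectively, add) a box of content congruent to $i$ modulo $\kappa^{-1}\Z$. Since $F_i$ is exact and arises from the Kac-Moody categorification, it maps a single block to a single block; this is built into the $2$-representation structure because $F_i$ shifts the weight of a block by a fixed simple root $\alpha_i$, and blocks are uniquely determined by their weight on the relevant subquotient.

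The key step is then the following. If $\mu \in \Part_\ell(n-1)$ admits two distinct addable boxes of the same residue $i$, giving rise to multipartitions $\lambda, \lambda' \in \Part_\ell(n)$, then $L_c(\lambda)$ and $L_c(\lambda')$ lie in the same block. Indeed, $F_i \Delta_c(\mu)$ carries a standard filtration whose subquotients are the $\Delta_c(\nu)$ over all $\nu$ obtained from $\mu$ by adding an addable $i$-box; in particular both $\Delta_c(\lambda)$ and $\Delta_c(\lambda')$ appear. Since all composition factors of $F_i \Delta_c(\mu)$ lie in a single block by the preceding paragraph, so do the Verma heads $L_c(\lambda)$ and $L_c(\lambda')$.

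To finish, I would reduce the general case to this elementary move by a chain argument. Given $\lambda \neq \lambda'$ sharing the multiset of contents mod $\kappa^{-1}\Z$, I plan to construct a sequence $\lambda = \lambda^0, \lambda^1, \ldots, \lambda^r = \lambda'$ in which each consecutive pair $\lambda^k, \lambda^{k+1}$ is obtained from a common $\mu^k \in \Part_\ell(n-1)$ by adding addable boxes of the same residue, and whose symmetric difference with $\lambda'$ strictly decreases with $k$. The previous paragraph then places all $L_c(\lambda^k)$ in one block. The main obstacle is precisely this combinatorial chain construction: one must verify that, whenever $\lambda \neq \lambda'$ share the content multiset, there exists a removable $b \in \lambda \setminus \lambda'$ and an addable position $b'$ of the same residue such that $\mu = \lambda \setminus b$ is a multipartition, $(\lambda \setminus b) \cup b'$ is a multipartition, and the latter is strictly closer to $\lambda'$. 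This is a routine but slightly delicate box-counting exercise, essentially the multipartition analogue of the abacus manipulations in the $\ell = 1$ case, and can be carried out by induction on $|\lambda \triangle \lambda'|$ after choosing $b$ to be extremal in an appropriate ordering (e.g.\ a box of largest content among those in $\lambda \setminus \lambda'$).
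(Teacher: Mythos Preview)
The paper does not actually prove this lemma: it is stated as ``a reformulation of \cite[Lemma 5.16]{SV}'' and taken as input. So there is no proof in the paper to compare your argument to, only the citation.

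Your plan, however, has a genuine circularity problem with respect to this paper. Look at how $F_z$ is defined in Section~\ref{SS_cat_act}: for $M$ in the block corresponding to a multiset $A$, one \emph{sets} $F_zM$ to be the projection of $FM$ to the block corresponding to $A\cup\{z\}$. That definition already presupposes Lemma~\ref{Lem:blocks} (indeed, both directions of it). So invoking the functors $F_z$ from Section~\ref{SS_cat_act} to prove Lemma~\ref{Lem:blocks} is circular.

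You might try to escape this by using Shan's original definition of $E_z,F_z$ via generalized eigenspaces of the endomorphism $X\in\End(E)$, which makes sense prior to any block description. But then your key assertion ``$F_i$ maps a single block to a single block'' is no longer free. Your justification --- ``blocks are uniquely determined by their weight'' --- is precisely (the converse of) the lemma you are trying to prove; nothing in the abstract $2$-representation axioms forces distinct blocks to have distinct weights. What you would actually need is that $F_z\Delta_c(\mu)$, defined via the $X$-eigenvalue decomposition, is indecomposable (or at least lies in a single block), and that is a nontrivial statement about the cyclotomic category $\mathcal{O}$, not a formal consequence of having a Kac-Moody action. The standard route to it passes through the KZ functor and the known block classification of cyclotomic Hecke algebras, which is presumably how \cite{SV} proceeds. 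Once that is in hand, your chain argument is reasonable, but at that point you have essentially reproduced the cited result rather than bypassed it.
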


Now let $b=(x,y,i),b'=(x',y',i')$ be two boxes such that their contents modulo $\kappa^{-1}\Z$
are the same (we say that such boxes are equivalent).
When $b\sim b'$, we say that $b\leqslant b'$ if $\kappa \ell \cont(b)-i\geqslant
\kappa \ell \cont(b')-i'$.

\subsubsection{Lattices $\param_{\Z}$ and $\underline{\param}_{\Z}$}
Note that for $c=(\kappa,s_1,\ldots,s_\ell), c'=(\kappa',s_1',\ldots,s_\ell')$,
the inclusion $c-c'\in \param_{\Z}$ means that $\kappa-\kappa'\in \Z$
and $(h_i-h_j)-(h_i'-h_j')=\kappa (s_i-s_j)-\kappa' (s_i'-s_j')\in \Z$. The sublattice
$\underline{\param}_{\Z}$ coincides with $\param_{\Z}$ in this case
because, for the groups $G(\ell,1,n)$, the KZ twist is trivial,
see \cite[6.4.7]{GL} for explanation and references.

\subsubsection{Switching $\kappa$ to $-\kappa$}
Let $\chi$ be the character of $G(\ell,1,n)$ that is equal to the sign on $\Sym_n$
and is the identity on $\Z/\ell\Z$. Then, if $c\in \param, c=(\kappa,s_1,\ldots,s_\ell)$,
we have $c^\chi=(-\kappa, -s_1,\ldots,-s_\ell)$. The corresponding equivalence
$\OCat_c\xrightarrow{\sim}\OCat_{c^\chi}$ maps $\Delta_c(\lambda)$ to $\Delta_{c^\chi}(\lambda^t)$,
where $\lambda^t$ denotes the componentwise transpose of the $\ell$-multipartition
$\lambda$.

\subsection{Categorical actions}\label{SS_cat_act}
In this section we consider categorical actions of Lie algebras on $\OCat_c$
and some related structures.

\subsubsection{Categorical Kac-Moody action}\label{SSS_cat_KM}
There is a categorical type A Kac-Moody action (in the sense of \cite[5.3.7,5.3.8]{Rouquier_2Kac})
on $\mathcal{O}_c$ defined in \cite[Section 5]{Shan}. Such an action consists of biadjoint
functors $E,F$ (with fixed one sided adjunction morphisms) and functor endomorphisms
$X\in \End(E)$ and $T\in \End(E^2)$. We are going to recall the functors $E$ and $F$
and also how they split into the eigen-functors for $X$, but we are not going to
recall the other parts of the data.

We set $F:=\bigoplus_{n\geqslant 0}\Ind_n^{n+1}$ and $E:=\bigoplus_{n\geqslant 0}\Res^n_{n-1}$,
where we write $\Ind_n^{n+1}$ for the induction functor from $G(\ell,1,n)$ to $G(\ell,1,n+1)$
and $\Res^n_{n-1}$ for $n>0$ has the similar meaning, while $\Res^0_{-1}=0$.

Let $z\in \C/\kappa^{-1}\Z$. We say that a box $b=(x,y,i)$ is a $z$-box if $x-y+s_i$
is congruent to $z$ modulo $\kappa^{-1}\Z$.  For a module $M$ in a block corresponding
to a multiset $A$, we define $F_z M$ as the projection of $FM$ to the block corresponding
to $A\cup \{z\}$. We define $E_z M$ as the projection of $EM$ to the block corresponding
to $A\setminus \{z\}$. Then $(F_z,E_z)$ define a highest weight categorical $\slf_2$-action
on $\OCat_c$, see \cite[Section 4.2]{cryst}. In particular, $E_z^n$ decomposes
as the direct sum of $n!$ copies of the divided power functor $E_z^{(n)}$,
see \cite[5.2.2]{CR}. Similarly, we get the functor $F_z^{(n)}$.

The functors $E_z,F_z$ give rise to a categorical action of a Kac-Moody algebra $\g_c$
that is determined as follows. For $i,j\in \{1,\ldots,\ell\}$, we write $i\sim_c j$
if $s_i-s_j\in \Z+\kappa^{-1}\Z$ (i.e., the diagrams number $i$ and $j$ can have equivalent
boxes). Then the algebra
acting is the product of several copies of $\hat{\slf}_e$ (where $e$ is the denominator
of $\kappa$), one copy per each equivalence class for $\sim_c$. The complexified $K_0$
is the product $\bigotimes_\alpha \mathcal{F}_\alpha$, where $\alpha$ runs
over the equivalence classes for $\sim_c$ and we write $\mathcal{F}_\alpha$
for the level $|\alpha|$ Fock space whose basis is indexed by multipartitions
of the form $(\lambda^{(i)})$, where $\lambda^{(i)}=\varnothing$ if $i\not\in\alpha$.

\subsubsection{Kac-Moody crystal}\label{SSS_crystal}
For a category $\Cat$ equipped with a categorical  action of a Kac-Moody
algebra $\g$, the set  $\Irr(\Cat)$ comes equipped with  the structure
of a $\g$-crystal. Let us recall the construction. Pick a simple $L$. It was checked in \cite[Proposition 5.20]{CR}
(for any categorical $\slf_2$-action with functors $E_z,F_z$)
that, whenever the object $E_zL\neq 0$, it has isomorphic head and socle that are simple.
The same is true for $F_z L$. The  operator $\tilde{e}_z$
sends $L$ to the socle of $E_z L$  provided it is nonzero and to zero otherwise.
The operator $\tilde{f}_z$ is defined in a similar fashion. Below we often
use the identification of $\mathcal{P}_\ell$ with the set of irreducibles in
$\OCat_c$ to carry the crystal structure to $\mathcal{P}_\ell$.

Now let us recall how to compute the crystal for the $\g_c$-action on
$\OCat_c$, see \cite[Theorem 5.1]{cryst}. To compute $\tilde{f}_z L_c(\lambda)$ and $\tilde{e}_z L_c(\lambda)$
consider all addable and removable $z$-boxes in $\lambda$. We place them in the decreasing order and write $+$ for an addable box and $-$ for a removable one (we call this collection the {\it $z$-signature} of $\lambda$). Then we consequently remove all instances of $-+$ getting what we call the {\it reduced signature}.
The operator $\tilde{f}_z$ adds the box that corresponds to the rightmost remaining $+$, the operator $\tilde{e}_z$ removes the box corresponding to the leftmost remaining $-$. If the reduced signature consists of all $+$'s (resp, all $-$'s),
then $\tilde{e}_z \lambda=0$ (resp., $\tilde{f}_z \lambda=0$).

\subsubsection{Heisenberg action}\label{SSS_Heisenberg}
Now suppose that $\kappa<0$ is a rational number, while  $e\kappa s_1,\ldots,e\kappa s_\ell\in \Z$. Then we get
functors $E_i,F_i$, one per residue mod $e$. The based space $(K_0^{\C}(\OCat_c), [\Delta_c(\tau)], \tau\in
\Part_\ell)$ becomes the level $\ell$ Fock space with multicharge $(s_1,\ldots, s_\ell)$
(note that  the Fock space makes sense as an $\hat{\slf}_e$-module as long as $s_1,\ldots,s_\ell$
are rational numbers whose denominators are coprime to $e$).

We can realize $\underline{W}:=G(\ell,1,n-m)\times \mathfrak{S}_m$ as a parabolic subgroup of $W:=G(\ell,1,n)$.
We will need some functors obtained from the induction functors from $G(\ell,1,n-em)\times \mathfrak{S}_{em}$
to $G(\ell,1,n)$. Let $\mu$ be a partition of $m$. Consider the functor $A_\mu:=\operatorname{Ind}_W^{\underline{W}}\left(\bullet\boxtimes L^A(e\mu)\right)$,
where $L^A(e\mu)$ stands for the simple in $\mathcal{O}_\kappa(\mathfrak{S}_{em})$ indexed by the partition
$e\mu:=(e\mu_1,e\mu_2,\ldots)$ of $em$. As Shan and Vasserot checked in \cite[Section 5.3]{SV}, the functors $A_\mu$
commute with $E_i,F_i$ for all $i$ and $\mu$. On the level of $K_0$, the functors $A_\mu$ and their
derived right adjoint functors $RA_\mu^*$ give rise to a Heisenberg action. 

We note that in \cite{SV} there was an assumption that $e>2$. It was needed to make sure that
the category $\mathcal{O}_\kappa(\mathfrak{S}_n)$ is equivalent to the category of modules
over the $q$-Schur algebra $\mathcal{S}_q(n)$, where $q=\exp(\pi\sqrt{-1}\kappa)$. This is trivial
when $e=1$ (both categories are isomorphic to $\mathfrak{S}_n\operatorname{-mod}$) and was
established in \cite[Appendix]{VV_proof} when $e=2$.

If $\kappa>0$, we still have a categorical Heisenberg action: we need to set $A_\mu:=
\operatorname{Ind}^{\underline{W}}_W\left(\bullet\boxtimes L^A((e\mu)^t)\right)$.

In fact, one can remove the condition that  $e\kappa s_1,\ldots,e\kappa s_\ell\in \Z$.
In the general case the functors $A_\mu$ are going to decompose into direct sums and will lead to several
commuting Heisenberg actions. We do not need this construction so we are not going to
elaborate on this.

\subsection{Decomposition}\label{SS_equi}
When $\kappa$ is irrational, the computation
of supports was done in \cite{cryst}. So we assume that the denominator $e$ of
$\kappa$ is finite. Below we will write $\OCat_{\kappa,{\bf s}}$ for $\OCat_c$
(where ${\bf s}=(s_1,\ldots,s_\ell)$).

Suppose that there is more than one equivalence class (with respect to $\sim_c$) of the indexes
$1,\ldots,\ell$. For an equivalence class $\alpha$, set ${\bf s}^\alpha:=(s_i)_{i\in \alpha}$.
Let us write $\OCat_{\kappa,{\bf s}^\alpha}$
for the category $\mathcal{O}$ for $G(|\alpha|,1,n)$, this category comes equipped
with a categorical action of the factor of $\g_c$ corresponding to $\alpha$.
We take the set of all multipartitions that have zero entries outside of $\alpha$ for the labelling set
of $\OCat_{\kappa,{\bf s}^\alpha}$.  So the simples in $\boxtimes_\alpha \OCat_{\kappa,{\bf s}^\alpha}$
are labelled by $\Part_\ell$. Also note that $\boxtimes_\alpha \OCat_{\kappa,{\bf s}^\alpha}$
comes with the tensor product action of $\g_c$.

\begin{Lem}\label{Lem:cat_equi_decomp}
There is an equivalence $\OCat_{\kappa,{\bf s}}\xrightarrow{\sim}\boxtimes_\alpha \OCat_{\kappa,{\bf s}^\alpha}$
mapping $\Delta(\lambda)$ to $\Delta(\lambda)$ and strongly equivariant for the $\g_c$-action.
\end{Lem}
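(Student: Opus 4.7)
The plan is to build the equivalence block-by-block, using the KZ functor together with a Dipper--Mathas-style Morita splitting of the cyclotomic Hecke algebra, and then transport the categorical $\g_c$-action along it.

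First, I would describe the block decomposition of both sides. By Lemma~\ref{Lem:blocks}, blocks of $\OCat_{\kappa,{\bf s}}$ are determined by the multisets $\{\cont(b)\bmod \kappa^{-1}\Z : b\in\lambda\}$. Since boxes in diagrams $i,j$ with $i\not\sim_c j$ can never be equivalent, each such multiset splits canonically as $\bigsqcup_\alpha M_\alpha$ according to the equivalence classes $\alpha$. Hence each block of $\OCat_{\kappa,{\bf s}}$ corresponds to a tuple $(B_\alpha)_\alpha$ of blocks of the factors $\OCat_{\kappa,{\bf s}^\alpha}$, via the assignment $\lambda\mapsto(\lambda^\alpha)_\alpha$, where $\lambda^\alpha:=(\lambda^{(i)})_{i\in\alpha}$. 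This matches simples of $\OCat_{\kappa,{\bf s}}$ with simples of $\boxtimes_\alpha\OCat_{\kappa,{\bf s}^\alpha}$ block by block, and within each matched block the two highest weight orders agree, since the $c$-function is additive in the $\lambda^\alpha$ up to a constant depending only on the block.

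Next, for each matched block I would apply the KZ functor, which realizes $\OCat_{\kappa,{\bf s}}$ as a $(-1)$-faithful highest weight cover of the cyclotomic Hecke algebra $H^\ell_n(q,{\bf q})$, with $q=\exp(\pi\sqrt{-1}\kappa)$. A theorem of Dipper--Mathas says that when the parameters ${\bf q}$ split into $q^{\Z}$-orbits indexed by the classes $\alpha$, $H^\ell_n(q,{\bf q})$ is Morita equivalent to $\bigoplus_{\sum n_\alpha=n}\bigotimes_\alpha H^{|\alpha|}_{n_\alpha}(q,{\bf q}^\alpha)$, block by block. Rouquier's uniqueness theorem for highest weight covers then produces a highest weight equivalence between each block of $\OCat_{\kappa,{\bf s}}$ and the corresponding tensor product of blocks of $\OCat_{\kappa,{\bf s}^\alpha}$, sending $\Delta(\lambda)$ to $\boxtimes_\alpha\Delta(\lambda^\alpha)$. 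Summing over blocks gives the desired additive equivalence.

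Finally, for strong $\g_c$-equivariance I would use that $F_z$ (respectively $E_z$) is defined as the projection of the induction (respectively restriction) functor onto the summand where a $z$-box is added (respectively removed). Since $z$ lies in exactly one $\sim_c$-class $\alpha$, the corresponding box must come from a diagram with index in $\alpha$, and hence $F_z,E_z$ act through the $\alpha$-factor only, matching the tensor-product action on $\boxtimes_\alpha\OCat_{\kappa,{\bf s}^\alpha}$. The main obstacle will be upgrading this additive compatibility to full strong equivariance, i.e.\ intertwining the natural transformations $X$ and $T$ that complete the Kac--Moody structure; this should follow because $X$ and $T$ come from the affine Hecke action on the Hecke side of KZ, which is manifestly split by the Dipper--Mathas Morita equivalence, and can then be transported back through the highest weight cover.
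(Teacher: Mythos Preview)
Your approach is essentially the one the paper invokes: the proof it cites from \cite[Section~6.4]{rouqqsch} proceeds exactly via the KZ functor, the Dipper--Mathas Morita splitting of the cyclotomic Hecke algebra, and Rouquier's uniqueness theorem for highest weight covers, and the strong equivariance is then read off from the Hecke side just as you outline.

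One point to flag: the argument in \cite{rouqqsch} as written carries the restrictions $e\neq 2$ and $s_i-s_j$ not divisible by $e$ (these enter through the faithfulness hypotheses needed for the cover-uniqueness theorem), and the paper notes that removing them requires the deformation techniques of \cite[Section~4.2]{rouq_der}. Your proposal does not address this, so as stated it would only yield the lemma under those extra hypotheses; you should either acknowledge the restriction or indicate how the deformation argument lifts it.
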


The proof is given in \cite[Section 6.4]{rouqqsch} in the case when $e\neq 2$
and $s_i-s_j$ is not divisible by $e$. This restriction can be removed using techniques
from \cite[Section 4.2]{rouq_der}. The strong equivariance follows from the
construction of the equivalence in {\it loc. cit.} and of the categorical action in \cite{Shan}.
Similarly, as we have remarked in the end of  \ref{SSS_Heisenberg}, one can equip
$\OCat_{\kappa,{\bf s}}$ with several commuting Heisenberg actions, one action
per equivalence class. The equivalence of Lemma \ref{Lem:cat_equi_decomp} becomes
equivariant with respect to the Heisenberg actions.

For $\lambda\in \Part_\ell$, let $\lambda^\alpha$ denote the collection
of components of $\lambda$ in the diagrams of class $\alpha$.

\begin{Cor}\label{Cor:decomp_supp}
We have $p_{\kappa,{\bf s}}(\lambda)=\sum_\alpha p_{\kappa,{\bf s}^\alpha}(\lambda^\alpha)$
and $q_{\kappa,{\bf s}}(\lambda)=\sum_\alpha q_{\kappa,{\bf s}^\alpha}(\lambda^\alpha)$.
\end{Cor}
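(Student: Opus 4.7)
The plan is to treat the two equalities separately. Both parts start from the observation that the equivalence of Lemma \ref{Lem:cat_equi_decomp} sends $\Delta_{\kappa,{\bf s}}(\lambda)$ to $\boxtimes_\alpha \Delta_{\kappa,{\bf s}^\alpha}(\lambda^\alpha)$, and being an equivalence of highest weight categories it therefore identifies the simple $L_{\kappa,{\bf s}}(\lambda)$ with $\boxtimes_\alpha L_{\kappa,{\bf s}^\alpha}(\lambda^\alpha)$. I will handle $p$ via the Kac-Moody crystal characterization recalled in Section \ref{SS_known_results} and $q$ via a dimension count based on Lemma \ref{Lem:titl_support}.

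For the equality on $p$, use that $\g_c = \prod_\alpha \hat{\slf}_e$ splits according to the equivalence classes, and each residue $z\in \C/\kappa^{-1}\Z$ belongs to a unique class $\alpha(z)$; consequently the crystal operator $\tilde{e}_z$ only modifies the $\alpha(z)$-component of $\lambda$. The strong $\g_c$-equivariance in Lemma \ref{Lem:cat_equi_decomp} identifies the crystal on $\Irr(\OCat_{\kappa,{\bf s}})$ with the tensor product of the crystals on each $\Irr(\OCat_{\kappa,{\bf s}^\alpha})$. In such a product of crystals coming from commuting factors and acting on disjoint components of $\lambda$, the longest composition of annihilation operators that does not kill $\lambda$ is the concatenation of the longest non-killing compositions on the individual $\lambda^\alpha$, giving depth $\sum_\alpha p_{\kappa,{\bf s}^\alpha}(\lambda^\alpha)$; by the crystal characterization this equals $p_{\kappa,{\bf s}}(\lambda)$.

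For the equality on $q$, pick tilting generators $T^\alpha$ of each $\OCat_{\kappa,{\bf s}^\alpha}$, so that $T:=\boxtimes_\alpha T^\alpha$ is a tilting generator of the tensor product and hence, via the equivalence, of $\OCat_{\kappa,{\bf s}}$. The Künneth identity for $\Ext$ in a Deligne tensor product of highest weight categories yields
$$\Ext^i\bigl(T,\,L_{\kappa,{\bf s}}(\lambda)\bigr)\;\cong\;\bigoplus_{\sum_\alpha i_\alpha=i}\,\bigotimes_\alpha \Ext^{i_\alpha}\bigl(T^\alpha,\,L_{\kappa,{\bf s}^\alpha}(\lambda^\alpha)\bigr),$$
so the smallest $i$ with nonvanishing $\Ext$ is the sum of the corresponding minima for the factors. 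Lemma \ref{Lem:titl_support}, together with $\dim \Gamma_{p,q}=p+q$ and $n=\sum_\alpha|\lambda^\alpha|$, then rearranges to $p_{\kappa,{\bf s}}(\lambda)+q_{\kappa,{\bf s}}(\lambda)=\sum_\alpha\bigl(p_{\kappa,{\bf s}^\alpha}(\lambda^\alpha)+q_{\kappa,{\bf s}^\alpha}(\lambda^\alpha)\bigr)$. Subtracting the $p$-equality from the previous paragraph produces the $q$-equality.

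The main obstacle I anticipate is not either of the two depth computations but rather justifying the Künneth identity used in the third step, together with the assertion that $\boxtimes_\alpha T^\alpha$ really is a tilting generator. The tilting claim reduces to the fact that indecomposable tiltings of a Deligne tensor product are tensor products of indecomposable tiltings and that standard/costandard filtrations tensor; the Künneth identity should follow by tensoring injective resolutions in each factor and using exactness of $\boxtimes$ in each variable. An alternative route bypassing $\Ext$ would be to decompose the Heisenberg categorical action across the $\alpha$-factors and argue for $q$ exactly as for $p$ using Lemma \ref{Lem:supp_Heis_cryst}, but that lemma comes later in the paper, so the tilting/$\Ext$ route is the natural one at this point.
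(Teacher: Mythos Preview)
Your proof is correct and follows essentially the same route as the paper's. The paper's own proof is terser: it deduces the $p$-equality from strong $\g_c$-equivariance exactly as you do, then observes that the codimension of support is a highest-weight-categorical invariant by Lemma~\ref{Lem:titl_support} and hence is preserved by the equivalence of Lemma~\ref{Lem:cat_equi_decomp}; combined with $n=\sum_\alpha|\lambda^\alpha|$ this yields $p+q=\sum_\alpha(p_\alpha+q_\alpha)$, and subtracting the first equality gives the second. Your argument simply unpacks the middle step---``codimension is preserved''---by naming the tilting generator $T=\boxtimes_\alpha T^\alpha$ and invoking K\"unneth for $\Ext$, which is the implicit content of the paper's appeal to Lemma~\ref{Lem:titl_support}.
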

\begin{proof}
The first equality follows from the strong equivariance of the equivalence in
Lemma \ref{Lem:cat_equi_decomp} combined with the computation of
$p(\lambda)$ from \cite[Section 5.5]{cryst}. Also note that the codimensions of support
of $L(\lambda)$ in $\OCat_{\kappa,{\bf s}}$ and $ \boxtimes_\alpha \OCat_{\kappa,{\bf s}^\alpha}$
coincide. This is because these codimensions are recovered from the highest
weight structure, Lemma \ref{Lem:titl_support}. From here and the equality
$p_{\kappa,{\bf s}}(\lambda)=\sum_\alpha p_{\kappa,{\bf s}^\alpha}(\lambda^\alpha)$
we deduce  $q_{\kappa,{\bf s}}(\lambda)=\sum_\alpha q_{\kappa,{\bf s}^\alpha}(\lambda^\alpha)$.
\end{proof}

\section{Proofs of main results}\label{S_main_proofs}
\subsection{Heisenberg crystal}\label{SS_Heis_cryst}
Let $e$ denote the denominator of $\kappa$. Suppose $\kappa e s_1,\ldots,\kappa e s_\ell$ are all integers.
Here we will define a level 1  $\slf_\infty$-crystal on $\Irr(\OCat_c)=\mathcal{P}_\ell$ such that $q_c(\lambda)$
is the depth in this crystal.  The restriction of this crystal to $\{\lambda\mid p(\lambda)=0\}$ appeared implicitly in \cite[Section 5.6]{SV}.   Shan and Vasserot did not describe this structure as a crystal but they gave a basically equivalent description.

\subsubsection{The case $p_c(\lambda)=0$}\label{SSS_Heis_cryst_p0}
Let us start by establishing an $\slf_\infty$-crystal structure on the set  $\{\lambda\in \mathcal{P}_\ell\mid p_c(\lambda)=0\}$.

Let $\lambda$ be such that $L_c(\lambda)$ is finite dimensional, equivalently, $p_c(\lambda)=q_c(\lambda)=0$.
Then the structure of $A_\mu L_c(\lambda)$ is as follows, see \cite[Sections 5.4-5.6]{SV}:
there is a uniquely determined multipartition
$\tilde{a}_\mu\lambda\in \mathcal{P}_\ell(|\lambda|+e|\mu|)$ with  $L_c(\tilde{a}_\mu \lambda)$ being a subquotient
of $A_\mu L_c(\lambda)$  satisfying $p_c(\tilde{a}_\mu\lambda)=0$ and $q_c(\tilde{a}_\mu \lambda)=|\mu|$. Any other subquotient $L_c(\lambda')$ of $A_\mu L_c(\lambda)$ satisfies
$p_c(\lambda')=0, q_c(\lambda')<|\mu|$. The module $L_c(\lambda')$ cannot occur in the socle or in the head
of $A_\mu L(\lambda)$ by Lemma \ref{Lem:Ind_support}.

Further, it is shown in \cite[Section 5.6]{SV}, that, for any $q\in \Z_{>0}$, the map
\begin{equation}\label{eq:bijection}\mathcal{P}_1(q)\times
\{\lambda\in \mathcal{P}_\ell\mid p_c(\lambda)=q_c(\lambda)=0\}\rightarrow \{\lambda\in
\mathcal{P}_\ell\mid p_c(\lambda)=0, q_c(\lambda)=q\}, (\mu,\lambda)\mapsto \tilde{a}_\mu\lambda\end{equation}
is a bijection. The resulting bijection $$\mathcal{P}_1\times
\{\lambda\in \mathcal{P}_\ell\mid p_c(\lambda)=q_c(\lambda)=0\}\xrightarrow{\sim}\{\lambda\in \mathcal{P}_\ell\mid p_c(\lambda)=0\}$$
produces a  (level 1) $\slf_\infty$-crystal on the target space, carried over from the standard
$\slf_\infty$-crystal on $\mathcal{P}_1$.

By the construction, $q_c(\lambda)$ is the depth of $\lambda$ in the $\slf_{\infty}$-crystal.

\subsubsection{The general case}
Now let us extend the $\slf_\infty$-crystal to the whole set $\Irr(\OCat_c)=\mathcal{P}_\ell$.
A crucial step is the following claim.

\begin{Prop}\label{Prop:cryst_commut_einfty}
Let $\lambda^0$ be a multipartition with $p(\lambda^0)=q(\lambda^0)=0$, let $C_f$ be a composition
of $\tilde{f}_i$'s such that $\lambda:=C_f \lambda^0\neq 0$, and let $\mu$ be a partition. Set
$\tilde{\lambda}^0:=\tilde{a}_\mu \lambda^0$. Then the head of $A_\mu L(\lambda)$
is a nonzero multiple of $L(\tilde{\lambda})$, where $\tilde{\lambda}:=C_f \tilde{\lambda}^0$.
\end{Prop}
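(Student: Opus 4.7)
The plan is to induct on the length $r$ of $C_e = \tilde{f}_{i_r} \cdots \tilde{f}_{i_1}$.

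First, for the base case $r = 0$ I would point to the construction in Section \ref{SSS_Heis_cryst_p0} following \cite{SV}: $L(\tilde{\lambda}^0) = L(\tilde{a}_\mu \lambda^0)$ is the unique composition factor of $A_\mu L(\lambda^0)$ with $q = |\mu|$, and Lemma \ref{Lem:Ind_support} forces every simple in the head of $A_\mu L(\lambda^0) = \Ind_W^{\underline{W}} (L(\lambda^0) \boxtimes L^A(e\mu))$ to have support $\Gamma_{0, |\mu|}$, so only $L(\tilde{\lambda}^0)$ can appear.

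For the inductive step, I would write $C_e = \tilde{f}_i \circ C_e'$, $\lambda' = C_e' \lambda^0$, $\tilde{\lambda}' = C_e' \tilde{\lambda}^0$, and assume $\operatorname{head}(A_\mu L(\lambda')) = L(\tilde{\lambda}')^{\oplus m}$ with $m \geq 1$. Applying the exact functor $A_\mu$ to the head projection $F_i L(\lambda') \twoheadrightarrow L(\lambda)$ and invoking the commutation $A_\mu F_i \cong F_i A_\mu$ from \cite{SV} (recalled in Section \ref{SSS_Heisenberg}) produces
\[
\pi : F_i A_\mu L(\lambda') \twoheadrightarrow A_\mu L(\lambda).
\]
For any simple $L(\nu)$ in $\operatorname{head}(A_\mu L(\lambda))$, this gives
\[
0 \neq \Hom(A_\mu L(\lambda), L(\nu)) \hookrightarrow \Hom(F_i A_\mu L(\lambda'), L(\nu)) \cong \Hom(A_\mu L(\lambda'), E_i L(\nu)),
\]
so there is a nonzero map $f : A_\mu L(\lambda') \to E_i L(\nu)$; its image has head a quotient of $L(\tilde{\lambda}')^{\oplus m}$ (by the inductive hypothesis) and is a subobject of $E_i L(\nu)$, forcing $L(\tilde{\lambda}')$ to be a composition factor of $E_i L(\nu)$.

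Next, to pin down $\nu$ I would combine two facts: in the categorical $\slf_2$-action the socle of $E_i L(\nu)$ is the simple module $L(\tilde{e}_i \nu)$, and Lemma \ref{Lem:Ind_support} forces $L(\nu)$ to have the support predicted for $\tilde{\lambda}$. Together with the crystal identity $\tilde{f}_i \tilde{e}_i = \operatorname{id}$ on its domain, these constraints should yield $\tilde{e}_i \nu = \tilde{\lambda}'$, i.e., $\nu = \tilde{f}_i \tilde{\lambda}' = \tilde{\lambda}$. Nonvanishing of $L(\tilde{\lambda})$ in the head then follows since $A_\mu L(\lambda)$ is a nonzero object and so has a nonzero head, which the upper bound shows is a positive multiple of $L(\tilde{\lambda})$. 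The hard part will be precisely this ``pinning down'' step: ruling out $\nu \neq \tilde{\lambda}$ when $L(\tilde{\lambda}')$ is merely a composition factor (and not necessarily the socle) of $E_i L(\nu)$ --- the combinatorics of the $\hat{\slf}_e$-crystal (the reduced-signature rule for $\tilde{f}_i$) and the support rigidity from Lemma \ref{Lem:Ind_support} must work together to exclude other candidates obtained by adding different $i$-boxes to $\tilde{\lambda}'$, and it is here that the specific structure of the cyclotomic category $\OCat$ enters decisively beyond the abstract categorification setup.
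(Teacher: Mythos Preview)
Your overall strategy (induct on the length of $C_e$, use the commutation $A_\mu F_i\cong F_i A_\mu$, and analyse the head via adjunction) matches the paper's, and your base case is correct. But the gap you flag at the end is real and is exactly what the paper has to work around. From a nonzero map $A_\mu L(\lambda')\to E_i L(\nu)$ you only get that $L(\tilde{\lambda}')$ occurs as \emph{some} composition factor of $E_i L(\nu)$; neither the support constraint from Lemma~\ref{Lem:Ind_support} nor the reduced-signature combinatorics forces this factor to be the socle $L(\tilde{e}_i\nu)$. For an arbitrary single step $\tilde f_i$ there is no reason that $E_iL(\lambda')=0$, and without that the $\slf_2$-categorification gives no control over which composition factors of $E_iL(\nu)$ can receive a map from $A_\mu L(\lambda')$.

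The paper's fix is a preliminary rewriting of $C_e$ rather than extra combinatorics. One replaces $C_e$ by a composition of \emph{divided powers}
\[
C_e=\tilde f_{i_k}^{\,n_k}\cdots \tilde f_{i_1}^{\,n_1}
\]
chosen so that at every stage the intermediate multipartition sits at the top of its $i$-string: with $\lambda'=\tilde e_{i_k}^{\,n_k}\lambda$ one has $\tilde e_{i_k}\lambda'=0$, hence $E_iL(\lambda')=0$. Since $A_\mu$ commutes with $E_i$, this gives $E_iA_\mu L(\lambda')=0$. Now one works with $F_i^{(n)}$ and uses the Chuang--Rouquier fact that for a module $M$ killed by $E_i$ one has $E_i^{(n)}F_i^{(n)}M\cong M^{\oplus ?}$ and every simple in the head of $F_i^{(n)}M$ survives under $E_i^{(n)}$. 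Thus any simple $L(\hat\lambda)$ in the head of $A_\mu L(\lambda)$ has $E_i^{(n)}L(\hat\lambda)$ landing in the head of $A_\mu L(\lambda')$, which by induction is a multiple of $L(\tilde\lambda')$; hence $\tilde e_i^{\,n}\hat\lambda=\tilde\lambda'$ and $\hat\lambda=\tilde\lambda$. The point is that the ``highest-weight'' condition $E_iA_\mu L(\lambda')=0$, manufactured by the rewriting, replaces the uncontrolled inclusion $L(\tilde\lambda')\hookrightarrow E_iL(\nu)$ in your argument by an equality of heads, and this is what closes the gap.
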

\begin{proof}
First of all, since $p_{c}(\tilde{\lambda}^0)=0$, the multipartition $\tilde{\lambda}^0$
is a singular vertex in the $\hat{\slf}_e$-crystal. Since the weights of $\lambda^0, \tilde{\lambda}^0$
coincide, the connected components of the crystal through $\lambda^0, \tilde{\lambda}^0$
are isomorphic (and the isomorphism is unique and  maps $\tilde{\lambda}^0$ to $\tilde{\lambda}$).
In particular, $\tilde{\lambda}\neq 0$.

Now we can prove our claim by the induction on the length of $C_f$. The case when the length is
$0$ is trivial. Now suppose that the claim is proved for all lengths less than some $N$. We are going to prove it
for $C_f$ of length $N$. First of all, we will modify $C_f$ without changing $\lambda,\tilde{\lambda}$
(and therefore preserving the length). Namely, we can find indexes $i_1,\ldots,i_k$ and positive
integers $n_1,\ldots,n_k$ summing to $N$ such that
\begin{itemize}
\item $\tilde{e}_{i_1}^{n_1} \tilde{e}_{i_2}^{n_2}\ldots \tilde{e}_{i_k}^{n_k}\lambda=\lambda^0$.
\item $\tilde{e}_{i_j}^{n_j+1}\tilde{e}_{i_{j+1}}^{n_{j+1}}\ldots \tilde{e}_{i_k}^{n_k}\lambda=0$ for all $j$.
\end{itemize}
We set $C_f:= \tilde{f}_{i_k}^{n_k}\ldots \tilde{f}_{i_1}^{n_1}$, it maps $\lambda^0$ to $\lambda$
and $\tilde{\lambda}^0$ to $\tilde{\lambda}$.

Set $\lambda':=\tilde{e}_{i_k}^{n_k}\lambda, \tilde{\lambda}':=\tilde{e}_{i_k}^{n_k}\tilde{\lambda}$,
we will write $i$ instead of $i_k$ and $n$ instead of $n_k$ to simplify the notation.
By the inductive assumption, the head of $A_\mu L(\lambda')$ is the direct sum  of several copies of
$L(\tilde{\lambda}')$. By the definition of $\tilde{f}_i$, we have an epimorphism
$F_i^{(n)}L(\lambda')\twoheadrightarrow L(\lambda)$.
Since the functors $A_\mu$ and $F_{i}^{(n)}$ commute, \cite[Proposition 5.15]{SV}, we get an epimorphism
$F_i^{(n)}A_\mu L(\lambda')\twoheadrightarrow A_\mu L(\lambda)$. By \cite[Lemma 5.11]{CR}, any simple
$L(\hat{\lambda})$ appearing in the head of $F_i^{(n)}A_\mu L(\lambda')$ is not killed by
$E_i^{(n)}$. By the construction, $E_i L(\lambda')=0$.
Since $E_i$ also commutes with $A_\mu$, we see that $E_i A_\mu L(\lambda')=0$. It follows that
$E_i^{(n)}F_i^{(n)}A_\mu L(\lambda')$ is a multiple of $A_{\mu} L(\lambda')$. But $E_i^{(n)}L(\hat{\lambda})$
appears in the head of $A_{\mu} L(\lambda')$. It follows that $\tilde{e}_i^n \hat{\lambda}=\tilde{\lambda}'$
and hence $\hat{\lambda}=\tilde{\lambda}$. This completes the proof.
\end{proof}

Using Proposition \ref{Prop:cryst_commut_einfty}, we can now define an $\slf_\infty$-crystal structure commuting with the
$\hat{\slf}_e$-crystal on the whole set $\Part_\ell$. Namely, we declare that for each $\lambda$ with $p_{c}(\lambda)=0$
a crystal operator $C_f$ for $\hat{\slf}_e$ with $C_f \lambda\neq 0$ is an $\slf_{\infty}$-crystal embedding
of the component of $\lambda$ into $\Part_\ell$. Thanks to Proposition \ref{Prop:cryst_commut_einfty},
this indeed gives rise to an $\slf_\infty$-crystal on $\Part_\ell$. Since this crystal arises
from the Heisenberg categorical action, we call it the {\it Heisenberg crystal}.


\begin{Lem}\label{Lem:supp_Heis_cryst}
The number $q_c(\lambda)$ coincides with the depth of $\lambda$ in the $\slf_\infty$-crystal.
\end{Lem}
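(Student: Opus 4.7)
The plan is to reduce to the base case $p_c(\lambda)=0$ of Section \ref{SSS_Heis_cryst_p0}, using the commutativity of the $\hat{\slf}_e$- and $\slf_\infty$-crystals together with the invariance of $q_c$ under the Kac-Moody crystal operators.

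For $\lambda$ with $p_c(\lambda)=0$, the result follows directly from the construction. The bijection (\ref{eq:bijection}) writes $\lambda=\tilde{a}_\mu\lambda^0$ for a unique $(\mu,\lambda^0)\in\Part_1\times\{p_c=q_c=0\}$ with $q_c(\lambda)=|\mu|$, while the $\slf_\infty$-crystal on $\{p_c=0\}$ is pulled back from the standard $\slf_\infty$-crystal on $\Part_1$ along this bijection. Since the depth of $\mu$ in the standard $\slf_\infty$-crystal on $\Part_1$ equals $|\mu|$ (one can always remove a corner box), the depth of $\lambda$ in our $\slf_\infty$-crystal also equals $|\mu|=q_c(\lambda)$.

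For general $\lambda$, let $\lambda^0$ be the singular vertex of the $\hat{\slf}_e$-component of $\lambda$ (so $p_c(\lambda^0)=0$) and pick a composition $C_e$ of operators $\tilde{f}_i$ with $C_e\lambda^0=\lambda$. The $\slf_\infty$-crystal on $\Part_\ell$ was constructed via Proposition \ref{Prop:cryst_commut_einfty} precisely so that $C_e$ restricts to an $\slf_\infty$-crystal isomorphism between the $\slf_\infty$-components through $\lambda^0$ and $\lambda$. In particular, the $\slf_\infty$-crystal depth of $\lambda$ equals that of $\lambda^0$, which by the previous paragraph equals $q_c(\lambda^0)$. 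So it remains to show $q_c(\lambda)=q_c(\lambda^0)$, i.e., that $q_c$ is invariant under the Kac-Moody crystal operators $\tilde{e}_i$ and $\tilde{f}_i$.

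To check invariance, suppose $\tilde{f}_i\lambda=\mu\neq 0$. Then $L(\mu)$ appears in the head of $F_iL(\lambda)$, a direct summand of the induction of $L(\lambda)$ along $G(\ell,1,n)\hookrightarrow G(\ell,1,n+1)$. Writing $\Supp L(\lambda)=G(\ell,1,n)\cdot\Gamma_{p,q}$ inside $\C^n$, the stabilizer of a generic point of $\Gamma_{p,q}$ is $\underline{W}_1=\Sym_e^q\times G(\ell,1,n-p-eq)$; Lemma \ref{Lem:Ind_support} then gives $\Supp L(\mu)=G(\ell,1,n+1)\cdot\h^{\underline{W}_1}$ inside $\C^{n+1}$. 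A direct inspection identifies $\h^{\underline{W}_1}$, up to a $G(\ell,1,n+1)$-permutation, with $\Gamma_{p+1,q}$ inside $\C^{n+1}$: the added $(n+1)$-st coordinate is unconstrained by $\underline{W}_1$, so becomes an extra free variable. Thus $q_c(\mu)=q=q_c(\lambda)$. The case $\tilde{e}_i\lambda=\nu\neq 0$ is dual: by adjointness $L(\lambda)$ lies in the head of $\Ind L(\nu)$, and the same computation gives $q_c(\nu)=q_c(\lambda)$; the non-vanishing of $\tilde{e}_i\lambda$ forces $p_c(\lambda)\geq 1$, so the reverse support calculation makes sense. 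The main obstacle is the careful identification $\h^{\underline{W}_1}\cong\Gamma_{p+1,q}$ up to the $G(\ell,1,n+1)$-action, which requires unwinding how $\underline{W}_1\subset G(\ell,1,n)$ sits inside $G(\ell,1,n+1)$.
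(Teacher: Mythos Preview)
Your proof is correct and follows essentially the same approach as the paper's: reduce to the case $p_c(\lambda)=0$ by showing that $q_c$ is invariant under the Kac-Moody crystal operators (via Lemma~\ref{Lem:Ind_support}), and invoke the construction of \ref{SSS_Heis_cryst_p0} for the base case. The paper's proof is terser, simply citing Lemma~\ref{Lem:Ind_support} for $q_c(\tilde{f}_i\lambda')=q_c(\lambda')$ without spelling out the identification $\h^{\underline{W}_1}\cong\Gamma_{p+1,q}$ that you work through explicitly.
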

\begin{proof}
Note that $q_c(\tilde{f}_i\lambda')=q_c(\lambda')$ for any $\lambda'$. This follows from Lemma
\ref{Lem:Ind_support}. Using this and the construction of the $\slf_\infty$-crystal,
we reduce the proof to the case when $p_c(\lambda)=0$.  Here our claim follows from
\ref{SSS_Heis_cryst_p0}.
\end{proof}

\subsection{Computation of the Heisenberg crystal in the asymptotic chambers}\label{SS_comput_Heis}
Here we  are going to compute the $\slf_\infty$-crystal operators
under the condition that one of $s_j$ is much less than the others.

\begin{Prop}\label{Prop:cryst_part}
Suppose that $\kappa<0$ and $s_j<s_i-N$ for all $i\neq j$ and some $N>0$. If $\lambda$ is a multipartition
with $p_{c}(\lambda)=0$ and $|\lambda|\leqslant N$, then $\lambda^{(j)}$ is divisible by $e$ and $q_{c}(\lambda)=|\lambda^{(j)}|/e$.
Further, if $|\lambda|+e|\mu|\leqslant N$, then $(\tilde{a}_\mu\lambda)^{(i)}=\lambda^{(i)}$
for $i\neq j$ and $(\tilde{a}_\mu \lambda)^{(j)}=\lambda^{(j)}+e\mu$.
\end{Prop}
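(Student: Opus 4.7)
The overall strategy is to use the asymptotic hypothesis to decouple $\lambda^{(j)}$ from the remaining components in the Kac--Moody signature calculation, then transport the $\ell=1$ Wilcox picture onto the $j$-th slot via the bijection~(\ref{eq:bijection}) of \cite{SV}.

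For part (1), I would analyze the $z$-signature of $\lambda$ for each residue $z$. The hypothesis $s_i - s_j > N \geqslant |\lambda|$ guarantees that every addable or removable $z$-box of $\lambda^{(j)}$ has strictly smaller content than every such box in $\lambda^{(i)}$ for $i \neq j$, so the $z$-signature decomposes in decreasing order as a concatenation $\sigma_{\neq j}(z)\sigma_j(z)$ with the contribution of $\lambda^{(j)}$ coming strictly to the right. Writing the reduced form of $\sigma_{\neq j}(z)$ as $+^{a_1}-^{b_1}$ and that of $\sigma_j(z)$ as $+^{a_2}-^{b_2}$, the $-+$ reduction of the concatenation yields either $+^{a_1}-^{b_1-a_2+b_2}$ or $+^{a_1+a_2-b_1}-^{b_2}$; in either case, the condition $p_c(\lambda)=0$ (that no $-$ remains, for every $z$) forces $b_2=0$ for every $z$. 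Equivalently, $\lambda^{(j)}$ is singular for the $\hat{\slf}_e$-crystal on $\Part_1$. Via the classical decomposition $F^{(1)} \cong V(\Lambda_0)\otimes F_{\mathrm{Heis}}$ of the level-$1$ Fock space, the singular vertices of this crystal are exactly the partitions of the form $e\mu$, yielding part (1).

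For parts (2) and (3), the bijection~(\ref{eq:bijection}) identifies $\{\lambda : p_c(\lambda)=0\}$ with $\Part_1\times\{\lambda^0 : p_c(\lambda^0)=q_c(\lambda^0)=0\}$ via $(\mu,\lambda^0)\mapsto \tilde{a}_\mu\lambda^0$, with $q_c(\tilde{a}_\mu\lambda^0)=|\mu|$. Applying part (1) to $\lambda^0$ (where $q=0$) forces $(\lambda^0)^{(j)}=\varnothing$. I claim the image is the multipartition $\tilde\lambda$ with $\tilde\lambda^{(j)}=e\mu$ and $\tilde\lambda^{(i)}=(\lambda^0)^{(i)}$ for $i\neq j$: the same signature argument as in part (1), now using the $e$-divisibility of $e\mu$ to get $b_2=0$, together with $b_1=0$ inherited from singularity of $\lambda^0$, gives $p_c(\tilde\lambda)=0$. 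To conclude, I need to show $L_c(\tilde\lambda)$ appears in $A_\mu L_c(\lambda^0)=\Ind_W^{\underline W}(L_c(\lambda^0)\boxtimes L^A(e\mu))$ as a composition factor with $q_c=|\mu|$, where $\underline W=G(\ell,1,n-e|\mu|)\times\Sym_{e|\mu|}$; the uniqueness clause of~(\ref{eq:bijection}) then forces $\tilde{a}_\mu\lambda^0=\tilde\lambda$, and part (2) follows by writing any $\lambda$ with $p_c(\lambda)=0$ as $\tilde{a}_\mu\lambda^0$ with $\mu=\lambda^{(j)}/e$. By adjunction of $\Ind$ and $\Res$, this reduces to exhibiting $L_c(\lambda^0)\boxtimes L^A(e\mu)$ as a composition factor of $\Res^W_{\underline W}L_c(\tilde\lambda)$; the asymptotic separation of contents, combined with Wilcox's $\ell=1$ description (which singles out $L^A(e\mu)$ as the unique finite-dimensional simple of $\OCat_\kappa(\Sym_{e|\mu|})$ in its block with the required support), identifies the $\Sym_{e|\mu|}$-factor as $L^A(e\mu)$ and the $G(\ell,1,\cdot)$-factor as $L_c(\lambda^0)$.

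The main obstacle is this last restriction computation: cleanly matching $\Res^W_{\underline W}L_c(\tilde\lambda)$ with $L_c(\lambda^0)\boxtimes L^A(e\mu)$ rather than some competing simple with the same support. Control comes from the compatibility of $\Res$ with the categorical $\hat{\slf}_e$-action and with the block decomposition of Lemma~\ref{Lem:blocks}: the asymptotic content separation places the $e|\mu|$ boxes of $\tilde\lambda^{(j)}$ in a disjoint content range from the rest of $\tilde\lambda$, forcing any composition factor of the restriction with the correct support to split along this content divide, and ruling out alternative factorizations.
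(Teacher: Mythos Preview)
Your argument for the first claim (divisibility of $\lambda^{(j)}$ by $e$) is correct and is exactly the paper's argument: the asymptotic hypothesis forces the $z$-signature of $\lambda^{(j)}$ to sit at the right end of the full $z$-signature, so $p_c(\lambda)=0$ forces the reduced signature of $\lambda^{(j)}$ alone to have no $-$'s, i.e.\ $\lambda^{(j)}=e\mu$.

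For the remaining claims your strategy diverges from the paper's and has two genuine gaps. First, the adjunction step does not do what you claim. Biadjointness of $\Ind$ and $\Res$ gives $\Hom(A_\mu L(\lambda^0),L(\tilde\lambda))\cong\Hom(L(\lambda^0)\boxtimes L^A(e\mu),\Res L(\tilde\lambda))$, so exhibiting $L(\lambda^0)\boxtimes L^A(e\mu)$ merely as a \emph{composition factor} of $\Res L(\tilde\lambda)$ tells you nothing; you would need it as a submodule (or quotient) to conclude that $L(\tilde\lambda)$ lies in the head (or socle) of $A_\mu L(\lambda^0)$. Composition-factor multiplicities simply do not transfer through adjunction the way you use them. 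Second, even if you established that $L(\tilde\lambda)$ is a composition factor of $A_\mu L(\lambda^0)$, invoking the uniqueness clause of (\ref{eq:bijection}) requires knowing $q_c(\tilde\lambda)=|\mu|$ \emph{in advance}, and this is precisely the content of the statement you are trying to prove. Your block/content-separation sketch for the restriction is also too coarse: under the running hypotheses all indices $1,\ldots,\ell$ lie in a single $\sim_c$-class, so Lemma~\ref{Lem:blocks} does not give the tensor splitting you gesture at, and nothing rules out other simples with the same support appearing in $\Res L(\tilde\lambda)$.

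The paper avoids all of this by working with Verma modules and the highest weight order rather than with restrictions of simples. The asymptotic hypothesis makes $\Delta(\lambda)$ the \emph{lowest} standard in the standard filtration of $\Ind\bigl(\Delta(\underline\lambda)\boxtimes\Delta^A(e\mu)\bigr)$ (where $\underline\lambda$ has $\underline\lambda^{(j)}=\varnothing$), so $L(\lambda)$ lies in its head; Lemma~\ref{Lem:Ind_support} then gives $q_c(\lambda)\geqslant|\mu|$ directly. A counting argument using the bijection (\ref{eq:bijection}) upgrades the inequality to equality. Only after $q_c$ is pinned down does the paper establish $\tilde a_\mu\underline\lambda=\lambda$, via a minimal-counterexample argument that peels off the kernel of $\Delta^A(e\mu)\twoheadrightarrow L^A(e\mu)$ (whose head contains no minimally supported simple). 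The key idea you are missing is this passage through the induced \emph{Verma} module, which gives access to $L(\lambda)$ in the head without any delicate restriction computation.
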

Note that this proposition implies Proposition \ref{Prop:domin_cryst}.
\begin{proof}
It follows from  \cite[Theorem 5.1]{cryst} that, under the assumptions of the proposition, $\lambda^{(j)}$ is divisible
by $e$. Indeed, under our assumption on $s_1,\ldots,s_\ell$, the $z$-signature of $\lambda^{(j)}$ will appear in
the end of the $z$-signature of $\lambda$, for all $z$. It follows that the reduced signatures of
$\lambda^{(j)}$ consist only of $+$'s. It is easy to see that this condition is equivalent to
$\lambda^{(j)}$ being divisible by $e$.

Now let us prove that $q_{c}(\lambda)\geqslant |\mu|$, where $e\mu=\lambda^{(j)}$. Set $\underline{\lambda}=(\lambda^{(1)},\ldots, \lambda^{(j-1)}, \varnothing, \lambda^{(j+1)},\ldots,\lambda^{(\ell)})$.
First, let us notice that $\Delta(\lambda)$ is the smallest (in the highest weight order) standard appearing in the filtration of $\operatorname{Ind}\Delta(\underline{\lambda})\boxtimes \Delta^A(e\mu)$ (here we have the induction
from $G(|\lambda|-e|\mu|,1,\ell)\times \Sym_{e|\mu|}$ to $G(|\lambda|,1,n)$). It follows that $L(\lambda)$
is in the head of $\operatorname{Ind}\Delta(\underline{\lambda})\boxtimes \Delta^A(e\mu)$ and hence in the head
of some object induced from a simple in the category $\mathcal{O}_c(G(|\lambda|-e|\mu|,1,\ell)\times \Sym_{e|\mu|})$
(that occurs in the composition series of $\Delta(\underline{\lambda})\boxtimes \Delta^A(e\mu)$).
By Lemma \ref{Lem:Ind_support}, $q_c(\lambda)\geqslant |\mu|$.

On the other hand the number of $\lambda$ with $p_{c}(\lambda)=0, q_{c}(\lambda)=|\mu|$
coincides with the number of $\lambda$ with $p_{c}(\lambda)=0, |\lambda^{(j)}|=e|\mu|$.
This follows from the fact that (\ref{eq:bijection}) is a bijection. Therefore $q_c(\lambda)=|\mu|$.

It remains to show that $\tilde{a}_\mu \underline{\lambda}=\lambda$. Let $\underline{\lambda}$
be minimal (with $p_c(\underline{\lambda})=q_c(\underline{\lambda})=0$ and hence
$\underline{\lambda}^{(j)}=\varnothing$) such that this fails.
Again, $L(\lambda)$ appears in the head of $\operatorname{Ind}\Delta(\underline{\lambda})\boxtimes \Delta^A(e\mu)$. Let $K$ denote the kernel of $\Delta^A(e\mu)
\twoheadrightarrow L^A(e\mu)$. Then $K$ does not contain any minimally supported object from $\mathcal{O}_\kappa^A(e|\mu|)$
in the head. Indeed, the category of minimally supported objects is semisimple (see \cite[Theorem 1.8]{Wilcox})
and the head of $\Delta^A(e\mu)$ is $L^A(e\mu)$.
Lemma \ref{Lem:Ind_support} implies that $L(\lambda)$ does not appear in the head of $\operatorname{Ind}\Delta(\underline{\lambda})\boxtimes
K$. So $L(\lambda)$ lies in the head of $\operatorname{Ind}\Delta(\underline{\lambda})\boxtimes L^A(e\mu)$.
Let us show that the only subquotient $L(\underline{\lambda}')$ of $\Delta(\underline{\lambda})$
such that $L(\lambda)$ lies in the head of $\operatorname{Ind}L(\underline{\lambda}')\boxtimes L^A(e\mu)$
is the top quotient $L(\underline{\lambda})$.
Indeed, if $L(\lambda)$ lies in the head of $\operatorname{Ind} L(\underline{\lambda}')\boxtimes L^A(e\mu)$,
then $\tilde{a}_\mu \underline{\lambda}'=\lambda$. It follows from Lemma \ref{Lem:Ind_support}
that $p_c(\underline{\lambda}')=q_c(\underline{\lambda}')=0$. As  $\underline{\lambda'}<\underline{\lambda}$,
we get a contradiction with the inductive assumption in the beginning of this paragraph.
\end{proof}

\begin{Rem}\label{Rem:Prop_generaliz}
The result of the previous proposition can be generalized to $p_c(\lambda)\neq 0$.
Here we divide $\lambda^{(j)}$ by $e$ with remainder: $\lambda^{(j)}=e\lambda'+\lambda''$.
Then $q_c(\lambda)=|\lambda'|$. This is easily from Proposition \ref{Prop:cryst_part}
combined with the fact that  the $\hat{\slf}_e$ and the $\slf_\infty$-crystals
commute.
\end{Rem}

\subsection{Wall-crossing bijections and crystal operators}\label{SS_wc_vs_cryst}
Now let us explain an interplay between wall-crossing bijections and crystal operators.
We will show that wall-crossing bijections through essential walls (defined
in \ref{SSS_essent_walls}) intertwine
the crystal operators for both $\g_c$- and $\slf_\infty$-crystals
(the latter is considered when all numbers $e\kappa s_1,\ldots, e\kappa s_\ell$
are integral).

First, let us list the essential walls.

\begin{Lem}\label{Lem:essent_walls}
The following list gives a complete collection of essential walls
for the group $G(\ell,1,n)$.
\begin{enumerate}
\item $\kappa=0$ for the parameters $c$, where the $\kappa$-component is a rational
number with denominator between $2$ and $n$.
\item $h_i-h_j=\kappa m$ with $i\neq j$ and $|m|<n$ -- for the parameters
$c$ satisfying $$h_i-h_j-\kappa m\in \Z+\frac{j-i}{\ell} \,(\Leftrightarrow s_i-s_j-m\in \kappa^{-1}\Z).$$
\end{enumerate}
\end{Lem}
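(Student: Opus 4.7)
The plan is to reduce semisimplicity of $\OCat_{\hat{c}}$ to that of the associated cyclotomic Hecke algebra, and then to read off the list of essential walls from the Ariki--Koike semisimplicity criterion.

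By \cite{GGOR}, the KZ functor from $\OCat_{\hat{c}}(G(\ell,1,n))$ to modules over the cyclotomic Hecke algebra $\mathcal{H}_{\hat{c}}$ is a quotient functor that kills no simple object. Hence $\OCat_{\hat{c}}$ is semisimple if and only if $\mathcal{H}_{\hat{c}}$ is. In the normalization of Section \ref{SS_cyclot_O}, the Hecke parameters are $q=\exp(2\pi\sqrt{-1}\kappa)$ and $Q_i=\zeta^{i-1}\exp(2\pi\sqrt{-1}\,h_i)$ with $\zeta=\exp(2\pi\sqrt{-1}/\ell)$.

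By the Ariki--Koike semisimplicity criterion, $\mathcal{H}_{\hat{c}}$ is semisimple if and only if $q^k\neq 1$ for $k=2,\ldots,n$ and $Q_i\neq q^m Q_j$ for all $i\neq j$ and $|m|<n$. The first family of conditions fails exactly when $\kappa$ is rational with denominator $e\in\{2,\ldots,n\}$, producing case (1). A short computation using $h_i=\kappa s_i-i/\ell$ shows that $Q_i=q^m Q_j$ is equivalent to $\kappa(s_i-s_j-m)\in\Z$, i.e., to $\hat{c}$ lying on the complex hyperplane $h_i-h_j=\kappa m$ and additionally satisfying $s_i-s_j-m\in\kappa^{-1}\Z$ (the discreteness that ensures the hyperplane meets the relevant lattice coset); this is case (2).

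To complete the argument, I would verify that each hyperplane listed in (1) and (2) is of the form $\Pi_{\tau,\xi}$ for some pair of non-equivalent $\tau,\xi\in\Irr(W)$, so that it participates in the chamber decomposition of Section \ref{SS_chamber}. For (2), exhibiting a pair of multipartitions differing by a rim-hook move of length $m$ between components $i$ and $j$ suffices, since the $c$-functions differ by $h_i-h_j-\kappa m$; for (1), an analogous rim-hook move inside a single component gives $\tau,\xi$ with $c_\tau-c_\xi$ vanishing exactly on the prescribed $\kappa$-wall. The main subtlety, and the step I expect to require the most care, is pinning down the conventions for the Hecke parameters and verifying sharpness of the bounds $|m|<n$ and $2\leqslant e\leqslant n$; both reduce to the observation that content differences among boxes of a multipartition of size $n$ can only lie in $\{-(n-1),\ldots,n-1\}$.
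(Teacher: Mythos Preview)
Your route via the KZ functor and Ariki's semisimplicity criterion is legitimate and genuinely different from the paper's argument, but one step is misstated. The KZ functor \emph{does} kill simple objects in general: its kernel is the Serre subcategory of modules with proper support, and whenever $\OCat_{\hat c}$ is not semisimple there are simples in that kernel. So the sentence ``a quotient functor that kills no simple object, hence $\OCat_{\hat c}$ is semisimple iff $\mathcal H_{\hat c}$ is'' does not stand as written. The equivalence $\OCat_{\hat c}$ semisimple $\Leftrightarrow$ $\mathcal H_{\hat c}$ semisimple is nonetheless true; you can cite it from \cite{rouqqsch} or argue via 0-faithfulness of KZ on standardly filtered objects (if $\mathcal H_{\hat c}$ is semisimple then $\operatorname{KZ}(\Delta(\tau))$ are simple and pairwise non-isomorphic, whence $\Hom(\Delta(\tau),\Delta(\xi))=0$ for $\tau\neq\xi$, forcing all standards to be simple).

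For comparison, the paper bypasses the Hecke algebra entirely: it uses the block description of Lemma~\ref{Lem:blocks} (blocks are determined by the multiset of shifted box contents modulo $\kappa^{-1}\Z$) to see directly that $\OCat_{\hat c}$ is semisimple iff no two distinct boxes are equivalent, and then exhibits, for each listed wall, a pair $\lambda,\lambda'$ differing by a single box to witness non-semisimplicity and to identify the wall with some $\Pi_{\lambda,\lambda'}$. This is more self-contained given the paper's setup and avoids tracking Hecke parameter conventions; your approach has the advantage of invoking a classical criterion but requires the extra care you flag about normalizations and the bounds $|m|<n$, $2\leqslant e\leqslant n$.
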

\begin{proof}
The category $\OCat_c(n)$ is semisimple if and only if all blocks in
$\Part_\ell(n)$ consist of one element. The description of blocks is
provided in Lemma \ref{Lem:blocks}. So if $c$ is not of the form
described in (1) or (2), then $\OCat_c$ is semisimple. Conversely, for
$c$ as described in (1) and (2), we can find two multipartitions
$\lambda,\lambda'$ of the form $\lambda=\underline{\lambda}\sqcup b,
\lambda'=\underline{\lambda}\sqcup b'$, where $b,b'$ are unequal equivalent
boxes. The corresponding wall $\Pi$ is of the form $\Pi_{\lambda,\lambda'}$.
It is essential.

To complete the proof  we need to determine the parameters $c$ for which
the walls we found are essential for. Recall that, by the definition of
an essential wall, if $\Pi$ is essential for $c$, then, for a Weil generic
element $\hat{c}\in c+\Pi$, the category $\OCat_{\hat{c}}(n)$ is not semisimple.
Together with the description in the previous paragraph, this gives the
description of the parameters $c$ in (1) and (2).
\end{proof}

So, according to Remark \ref{Rem:walls_separation}, to understand which essential
walls separate two parameters $c,c'$, we need to compare the signs of the functionals
$\kappa, h_i-h_j-\kappa m$ (so that the corresponding walls are essential for $c,c'$)
on $c,c'$.

Below we write $\WC_{c-\psi\leftarrow c}$ for the sum of the wall-crossing functors
over all $n$. The summand corresponding to $n$ will be denoted by $\WC_{c-\psi\leftarrow c}(n)$.
The main result here is as follows.

\begin{Prop}\label{Prop:WC_cryst_commut}
The following is true.
\begin{enumerate}
\item Wall-crossing bijections $\wc_{c-\psi\leftarrow c}$ through essential walls commute with the crystal
operators $\tilde{e}_z,\tilde{f}_z$ for $\g_c$.
\item Consider the wall in (2) of Lemma \ref{Lem:essent_walls}.
 Then the corresponding wall-crossing bijections $\wc_{c-\psi\leftarrow c}$ commute with the crystal operators for $\slf_\infty$.
\item Consider the wall in (1).
Then $\wc_{c-\psi\leftarrow c}\circ \tilde{a}_\mu=\tilde{a}_{\mu^t}\circ \wc_{c-\psi\leftarrow c}$.
\end{enumerate}
\end{Prop}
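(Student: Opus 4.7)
All three parts rest on two inputs: the commutation of $\WC_{c-\psi\leftarrow c}$ with induction and restriction (Proposition~\ref{Prop:WC_restr} and Corollary~\ref{Cor:WC_restr}), and the perverse-equivalence structure together with the $K_0$-identity (Proposition~\ref{Prop:perverse} and Proposition~\ref{Prop:WC_K0}). The strategy is to first lift each claim to a commutation of derived functors, then descend to the bijection $\wc_{c-\psi\leftarrow c}$ using the perverse filtration.

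\emph{Part (1).} I apply Proposition~\ref{Prop:WC_restr} with $\underline{W}=G(\ell,1,n-1)$ to conclude that $\WC_{c-\psi\leftarrow c}$ commutes with both $E=\bigoplus \Res^n_{n-1}$ and $F=\bigoplus \Ind^n_{n+1}$. The summands $E_z,F_z$ are cut out by the block decomposition of the target: blocks are determined by the multiset of shifted contents modulo $\kappa^{-1}\Z$ (Lemma~\ref{Lem:blocks}), and since $\WC_{c-\psi\leftarrow c}$ is the identity on $K_0$ (Proposition~\ref{Prop:WC_K0}) it preserves the block decomposition, hence commutes with each $E_z$, $F_z$. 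To descend to the crystal operators, I observe that $E_z,F_z$ preserve the support-filtration $\Cat^c_i$ (by \ref{SSS_res_HC_prop}) and invoke property (P3) of Proposition~\ref{Prop:perverse}: on each subquotient $\Cat^c_i/\Cat^c_{i+1}$ the wall-crossing descends to an abelian equivalence intertwining $E_z,F_z$, and therefore intertwining the socle/head operations defining $\tilde e_z,\tilde f_z$.

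\emph{Part (2).} The wall $h_i-h_j=\kappa m$ leaves $\kappa$ fixed, so for $\underline{W}=G(\ell,1,n-em)\times\Sym_{em}$ the type-A tensor factor crosses no essential wall. Proposition~\ref{Prop:WC_restr_essent} applies, and $\underline{\WC}_{c-\psi\leftarrow c}$ factors as $\WC_{c-\psi\leftarrow c}^{G(\ell,1,n-em)}\boxtimes \operatorname{id}_{\OCat_\kappa(\Sym_{em})}$. Combined with Corollary~\ref{Cor:WC_restr}, and using that the second factor fixes $L^A(e\mu)$, this gives $\WC_{c-\psi\leftarrow c}\circ A_\mu\cong A_\mu\circ \WC_{c-\psi\leftarrow c}$. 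Commutation with $\tilde a_\mu$ follows by the same descent as in (1), using the characterization of $\tilde a_\mu$ on $\lambda$ with $p_c(\lambda)=0$ from Section~\ref{SSS_Heis_cryst_p0} and then Proposition~\ref{Prop:cryst_commut_einfty} together with part (1) to extend to arbitrary $\lambda$.

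\emph{Part (3).} Now the wall is $\kappa=0$, and the type-A factor itself undergoes wall-crossing through $\kappa=0$. The same factorisation of $\underline{\WC}$ combined with Corollary~\ref{Cor:WC_restr} yields
\begin{equation*}
\WC_{c-\psi\leftarrow c}\circ A_\mu \;\cong\; \Ind\bigl(\WC_{c-\psi\leftarrow c}(\bullet)\boxtimes \WC^A_{-\kappa\leftarrow \kappa}(L^A(e\mu))\bigr),
\end{equation*}
so the statement reduces to identifying $\WC^A_{-\kappa\leftarrow \kappa}(L^A(e\mu))$ with $L^A((e\mu^t)^t)$ in $\OCat^A_{-\kappa}(\Sym_{em})$, which matches the definition of $A_{\mu^t}$ on the $\kappa>0$ side. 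Both are finite-dimensional simples: $\WC^A_{-\kappa\leftarrow \kappa}$ preserves supports (Corollary~\ref{Cor:wc_bij_supp}), and on each side of $\kappa=0$ Wilcox's formula (applied with the appropriate sign convention) identifies the finite-dim labels as $\{e\mu\}$ and $\{(e\mu)^t\}$ respectively. Since the category of finite-dimensional simples is semisimple (\cite[Theorem~1.8]{Wilcox}), the induced bijection is determined by a single bijection $\Part(m)\to\Part(m)$ on Heisenberg labels, which the $K_0$-identity of $\WC^A_{-\kappa\leftarrow\kappa}$ (Proposition~\ref{Prop:WC_K0}) pins down as the transpose, giving the $\mu\mapsto \mu^t$ in the statement.

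\emph{Main obstacle.} The delicate step is the combinatorial identification in part (3): confirming that the type-A wall-crossing at $\kappa=0$ takes the label $e\mu$ precisely to $(e\mu^t)^t$. Because the underlying category of minimally supported objects is semisimple, the problem is entirely on $K_0$, but extracting the transpose from the action of the wall-crossing bimodule requires care: one must compute the class of $\WC^A_{-\kappa\leftarrow \kappa}(L^A(e\mu))$ in the Fock space using either an asymptotic/singular-support bootstrap (inducting from $\mu=\varnothing$ and applying the Heisenberg operators on both sides) or an explicit comparison, up to Ringel twist, with the sign-character twist $\psi_\chi$ in type A. Parts (1) and (2), by contrast, follow essentially from assembling already established compatibilities.
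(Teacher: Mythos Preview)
Your treatment of parts (1) and (2) is essentially correct and matches the paper's argument. One small point: in (2) the descent from $\WC\circ A_\mu\cong A_\mu\circ\WC$ to the bijection $\wc\circ\tilde a_\mu=\tilde a_\mu\circ\wc$ is not literally ``the same as in (1)'', since $\tilde a_\mu$ is not a head/socle operator but rather picks out the unique subquotient of $A_\mu L(\lambda)$ with maximal $q$-value. The paper tracks this subquotient through the perverse filtration explicitly, showing it is the unique simple constituent of $H_i(\WC\, A_\mu L_c(\lambda))$ with $q=|\mu|$ lying outside $\Cat^{c-\psi}_{i+1}$; your sketch points in the right direction but elides this step.

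The genuine gap is in part (3). You correctly reduce to showing that the type-A wall-crossing $\WC^A_{+\leftarrow -}$ sends $L^A_-(e\mu)$ to $L^A_+((e\mu^t)^t)$ up to shift, i.e.\ that the induced bijection $\mu\mapsto\mu'$ on $\Part(k)$ is the transpose. But your claim that ``the $K_0$-identity pins [it] down as the transpose'' is not justified. The $K_0$-identity gives $[L^A_-(e\mu)]=(-1)^{k(e-1)}[L^A_+((e\mu')^t)]$ as elements of $K_0(\Sym_{ek}\operatorname{-mod})$; extracting $\mu'=\mu^t$ from this would require knowing the classes $[L^A_\pm(e\mu)]$ explicitly for all $\mu$, or some equivalent structural input. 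Neither of your proposed workarounds (bootstrap from $\mu=\varnothing$ via Heisenberg operators, or comparison with the sign twist) is carried out, and neither is obviously easier than what the paper does.

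The paper's argument for $\mu'=\mu^t$ proceeds differently. First it computes the base case $k=2$ by hand, using the explicit character formulas for $L^A_\pm(2e)$ and $L^A_\pm(e^2)$ from \cite{EGL} together with the $K_0$-identity. Then it observes, via the commutation of $\WC$ with induction from $\Sym_{e(k-1)}\times\Sym_e$ and the identification of the minimal-support category with $\Sym_k\operatorname{-mod}$ through the Schur--Frobenius picture, that $\mu\mapsto\mu'$ is an automorphism of the Young branching graph. Since a partition with $|\mu|>2$ is determined by the set of partitions obtained by removing one box, this automorphism is pinned down by its values on $\Part(2)$, completing the induction. This branching-rigidity argument is the missing idea in your proposal.
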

\begin{proof}

From Proposition \ref{Prop:WC_restr_essent} we deduce that
\begin{equation}\label{eq:WC_res_commute}\underline{\WC}_{c-\psi\leftarrow c}\circ \Res^W_{\underline{W}}\cong
\Res^W_{\underline{W}}\circ \WC_{c-\psi\leftarrow c}.\end{equation}

{\it Proof of (1)}. Here we take $\underline{W}:=G(\ell,1,n-1)$.
Let us show, first, that \begin{equation}\label{eq:WC_comm_Ei}\WC_{c-\psi\leftarrow c}\circ E_z
\cong E_z\circ \WC_{c-\psi\leftarrow c}.\end{equation}

Since $\WC_{c-\psi\leftarrow c}$ is an equivalence $D^b(\mathcal{O}_c)\rightarrow
D^b(\mathcal{O}_{c-\psi})$, it maps blocks into blocks. Note that  any block
of $D^b(\mathcal{O}_?)$ is the derived category of a block of $\mathcal{O}_?$ for
$?=c, c-\psi$.

The block decompositions of $\OCat_c, \OCat_{c-\psi}$
on the level of $K_0$ are the same because $\psi$ is integral.
So the functor  $\WC_{c-\psi\leftarrow c}$
preserves the labels of blocks because  it acts as the identity on
the $K_0$-groups, see Proposition \ref{Prop:WC_K0}.
(\ref{eq:WC_comm_Ei}) follows now from
the construction of the functors $E_z$ in \ref{SSS_cat_KM}.

Now let us show that $\wc_{c-\psi\leftarrow c}\circ \tilde{e}_i\cong \tilde{e}_i\circ
\wc_{c-\psi\leftarrow c}$. Since $\wc_{c-\psi\leftarrow c}$
is a bijection, the claim that it intertwines  the crystal operators $\tilde{f}_i$ will follow.

Recall from \ref{SSS_WC_properties} that $\WC_{c-\psi\leftarrow c}$ is a perverse equivalence, let $\Cat^c_i
\subset \mathcal{O}_c, \Cat^{c-\psi}_{i}\subset \mathcal{O}_{c-\psi}$ denote the corresponding
filtration subcategories for $i=1,2$. (\ref{eq:WC_comm_Ei}) implies that the functors $E_z$ preserve those filtrations and
induce categorical $\g_c$-actions on the quotients. Since the functor $\WC_{c-\psi\leftarrow c}$ induces the
abelian (up to  homological shifts) equivalences of the  subquotient categories, we deduce that $\wc_{c-\psi\leftarrow c}\circ \tilde{e}_i\cong
\tilde{e}_i\circ \wc_{c-\psi\leftarrow c}$.

{\it Proof of (2)}. Here $\underline{W}=G(\ell,1,n-ek)\times \Sym_{ek}$.
So $\OCat_c(\underline{W})=\OCat_c(n-ek)\boxtimes \OCat^A_\kappa(em)$.
 The functor
$\underline{\WC}_{c-\psi\leftarrow c}: D^b(\OCat_c\boxtimes \OCat^A_\kappa(em))
\xrightarrow{\sim} D^b(\OCat_{c-\psi}\boxtimes \OCat^A_\kappa(em))$ decomposes as
$\WC_{c-\psi\leftarrow c}\boxtimes \operatorname{id}$.
So it follows from (\ref{eq:WC_res_commute}) that
\begin{equation}\label{eq:WC_commut_A}
\WC_{c-\psi\leftarrow c}\circ A_\mu\cong A_\mu\circ \WC_{c-\psi\leftarrow c}.
\end{equation}

Using part (1) we reduce the proof of (2) to $\wc_{c-\psi\leftarrow c}\circ \tilde{a}_\mu(\lambda)=
\tilde{a}_\mu\circ \wc_{c-\psi\leftarrow c}(\lambda)$ for $\lambda$ satisfying $p_c(\lambda)=q_c(\lambda)=0$.
Set $\lambda':=\wc_{c-\psi\leftarrow c}(\lambda), \tilde{\lambda}:=\tilde{a}_\mu(\lambda),
\tilde{\lambda}':=\wc_{c-\psi\leftarrow c}(\tilde{\lambda}), \bar{\lambda}':=\tilde{a}_\mu(\lambda')$.
We need to show that $\tilde{\lambda}'=\bar{\lambda}'$.

It follows from (\ref{eq:WC_commut_A}) that $A_\mu(\Cat^c_i)\subset
\Cat^{c}_i$. On the other hand, by Corollary \ref{Cor:wc_bij_supp},
we have $p_{c-\psi}(\lambda')=q_{c-\psi}(\lambda')=0$
and  $p_{c-\psi}(\tilde{\lambda}')=0, q_{c-\psi}(\tilde{\lambda}')=|\mu|$. Let $i$ be such that
$\lambda\in \Irr(\Cat^c_i/\Cat^c_{i+1})$. Consider the object
$M:=A_\mu\circ \WC_{c-\psi\leftarrow c}(L_c(\lambda))\in D^b(\OCat_{c-\psi})$.
By (P2) from \ref{SSS_WC_properties}, we have $H_j(M)=0$ for $j<i, H_j(M)\in \Cat^{c-\psi}_{i+1}$ for
$j>i$. Moreover, $L_{c-\psi}(\bar{\lambda}')$ is a unique simple subquotient
of $H_i(M)$ that satisfies $q_c(\bar{\lambda}')=|\mu|$ and $L_{c-\psi}(\bar{\lambda}')\not\in
\Cat^{c-\psi}_{i+1}$. Now let us observe that $M=\WC_{c-\psi\leftarrow c}\circ A_\mu (L_c(\lambda))$. From (P3) it follows that $L_{c-\psi}(\tilde{\lambda'})$
is the only simple subquotient of $H_i(M)$ satisfying $q_c(\tilde{\lambda}')=|\mu|$
and $L_{c-\psi}(\tilde{\lambda}')\not\in \Cat^{c-\psi}_{i+1}$, provided such a subquotient
exists at all. So we see that $\bar{\lambda}'=\tilde{\lambda}'$. This finishes the proof of (2).


{\it Proof of (3)}. Let $\WC^A_{+\leftarrow -}$ denote the wall-crossing functor for type A
categories $\mathcal{O}$ (going from $\kappa$ negative to $\kappa$ positive).
By (\ref{eq:WC_res_commute}), we have $\underline{\WC}_{c-\psi\leftarrow c}=
\WC_{c-\psi\leftarrow c}\boxtimes \WC^A_{+\leftarrow -}$, where the meaning of
$\underline{\WC}_{c-\psi\leftarrow c}$ is as in the beginning of the proof of (2).

Note that $\WC^A_{+\leftarrow -}:D^b(\OCat_{-}^A(ek))\xrightarrow{\sim}D^b(\OCat_+^A(ek))$
induces an abelian equivalence with a shift of categories
of modules with minimal support (that are subs in the corresponding perverse filtration).
We have a self-bijection $\mu\mapsto \mu'$ of $\mathcal{P}_1(k)$
such that $\WC^A_{+\leftarrow -}(L_-(e\mu))=L_+((e\mu')^t)[-k(e-1)]$. Similarly to the proof
of part (2), part (3) will follow if
we check that $\mu'=\mu^t$.

First, consider the case when $k=|\mu|=2$ so that we have two options for $\mu$:
either $(2)$ or $(1^2)$.  By \cite[Theorem 1.8]{EGL}, we have
\begin{equation}\label{eq:K0-}
[L^A_-(2e)]-[L^A_-(e^2)]=\sum_{i=0}^{2e-1}(-1)^i[\Delta^A_-(2e-i, 1^i)].
\end{equation}
Similarly, we have
\begin{equation}\label{eq:K0+}
[L^A_+(1^{2e})]-[L^A_+(2^e)]=-\sum_{i=0}^{2e-1}(-1)^i[\Delta^A_+(2e-i, 1^i)].
\end{equation}
By Proposition \ref{Prop:WC_K0}, $\WC_{+\leftarrow -}$ induces the identity map
$K_0(\OCat^A_-)\rightarrow K_0(\OCat^A_+)$. So it maps $[L^A_-(2e)]-[L^A_-(e^2)]$
to $[L^A_+(2^e)]-[L^A_+(1^{2e})]$. It follows that $\WC_{+\leftarrow -}L^A_-(2e)=L^A_+(2^e)[-2(e-1)],
\WC_{+\leftarrow -}L^A_-(e^2)=L^A_+(1^{2e})[-2(e-1)]$. So, indeed, in this case,
$\mu'=\mu^t$.

Now we are going to prove that $\mu'=\mu^t$ by induction on $|\mu|$ with the base $|\mu|=2$.
Note that \begin{equation}\label{eq:Ind_Sym}\Ind^{\Sym_{ek}}_{\Sym_{e(k-1)}\times \Sym_e}L^A_-(e\underline{\mu})\boxtimes L^A_-(e)=\bigoplus_\mu L^A_-(e\mu),\end{equation}
where the sum is taken over all $\mu$ obtained from $\underline{\mu}$ by adding a box.
To see this one can use the equivalence of $\OCat^A_-(ek)$ with $\mathcal{S}_q(ek)\operatorname{-mod}$,
where $q=\exp(\pi\sqrt{-1}/e)$ and $\mathcal{S}_q(ek)$ is the $q$-Schur algebra of degree $ek$.
By \cite[Proposition 3.1]{SV}, the equivalences $\OCat^A_-(ek)\xrightarrow{\sim} \mathcal{S}_q(ek)\operatorname{-mod}$
intertwines the induction functors and the tensor product functors. The subcategory of minimally
supported modules in $\OCat^A_-(ek)$ corresponds to the essential image of the
Frobenius  $\mathcal{S}_1(k)\operatorname{-mod}\rightarrow\mathcal{S}_q(ek)\operatorname{-mod}$. The Frobenius pull-back
intertwines the tensor product functors as well. So the equivalence of the subcategory
of minimally supported modules in $\OCat^A_-(ek)$ and $\Sym_k\operatorname{-mod}$
given by $L^A_-(e\mu)\mapsto S_\mu$
intertwines the induction functors. (\ref{eq:Ind_Sym}) follows. We also have a direct analog
of (\ref{eq:Ind_Sym}) for the category $\OCat^A_+(ek)$.

It follows from (\ref{eq:WC_res_commute}), (\ref{eq:Ind_Sym})
and its $+\,$-analog  that $\mu\mapsto \mu'$ is an automorphism
of the branching graph for  $\Sym_1\subset \Sym_2\subset\ldots \subset \Sym_n\subset\ldots$.
We can uniquely recover $\mu$ from the collection of all diagrams obtained from $\mu$
by removing a box when $|\mu|>2$. Namely, if there is more than one diagram in this collection,
then for $\mu$ we take the union  of these diagrams. Otherwise, $\mu$ is a rectangle
and also can be recovered uniquely.
This serves as the induction step in our proof of $\mu'=\mu^t$. The proof of (3) is now complete.
\end{proof}

In particular, this proposition allows to compute the wall-crossing bijection for
the type A categories $\mathcal{O}$. Take a partition $\lambda$ and divide it
by $e$ with remainder $\lambda=e\lambda'+\lambda''$. The partition $\lambda''$
is $e$-co-restricted meaning that a column of each height appears less than $e$ times.
Let $\Part^e$ denote the set of all $e$-co-restricted partitions. This is the connected
component of $\varnothing$ in the $\hat{\slf}_e$-crystal. This set has a
remarkable involution called the Mullineux involution, $\mathsf{M}$: it is defined as the only
map that send $\varnothing$ to itself and is twisted equivariant with respect to the
crystal operators: $\mathsf{M}(\tilde{f}_i\lambda)=\tilde{f}_{-i}\mathsf{M}(\lambda)$.
Note that the special case of $e=+\infty$ of this fact has been used in the end of
the proof of Proposition \ref{Prop:WC_cryst_commut}.

The next corollary follows from (1) and (3) of Proposition \ref{Prop:WC_cryst_commut}.

\begin{Cor}\label{Cor:type_A_wc}
The bijection $\wc^A_{+\leftarrow -}$ sends $\lambda=e\lambda'+\lambda''$
to $(e(\lambda'^t)+\mathsf{M}(\lambda''))^t$.
\end{Cor}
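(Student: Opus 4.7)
My plan is to decompose $\lambda = e\lambda' + \lambda''$ with $\lambda'' \in \Part^e$ and apply parts~(1) and~(3) of Proposition~\ref{Prop:WC_cryst_commut} to reduce the computation of $\wc^A_{+\leftarrow -}(\lambda)$ to two ingredients: the value of this bijection on $\Part^e$ (which should turn out to be $\mathsf{M}(\cdot)^t$) and an explicit description of the Heisenberg operators $\tilde{a}^\pm_\mu$ on $\Part^e$. The sign-twist equivalence $\psi_\chi$ from Section~\ref{SS_cyclot_O}, which identifies $\OCat^A_-$ with $\OCat^A_+$ via $L_c(\sigma)\mapsto L_{-c}(\sigma^t)$, will be used to transport formulas between the two chambers.

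First, I will record two consequences of $\psi_\chi$. Contents of boxes flip sign under $c\mapsto -c$, and $L^A_-(e\mu)$ is sent to $L^A_+((e\mu)^t)$, so $\psi_\chi$ intertwines $F^-_z$ with $F^+_{-z}$ and $A^-_\mu$ with $A^+_\mu$. Since $\psi_\chi$ acts as transposition on labels of simples, passing to crystal operators yields the identities
\[
\tilde{f}^+_i\,\eta = \bigl(\tilde{f}^-_{-i}\eta^t\bigr)^t,\qquad \tilde{a}^+_\mu\,\eta = \bigl(\tilde{a}^-_\mu \eta^t\bigr)^t
\]
for every partition $\eta$. Next, I will show by induction on $|\lambda''|$ that $\wc^A_{+\leftarrow -}(\lambda'') = \mathsf{M}(\lambda'')^t$ for all $\lambda''\in\Part^e$. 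The base case $\wc(\varnothing)=\varnothing$ is immediate since $\wc$ preserves $|\lambda|$. For the step, writing $\lambda'' = \tilde{f}^-_i\nu$ with $\nu\in\Part^e$ of smaller size and combining Proposition~\ref{Prop:WC_cryst_commut}(1), the intertwining identity above, and the defining property $\mathsf{M}\circ\tilde{f}^-_i = \tilde{f}^-_{-i}\circ\mathsf{M}$ of Mullineux:
\[
\wc(\tilde{f}^-_i\nu) = \tilde{f}^+_i\,\mathsf{M}(\nu)^t = \bigl(\tilde{f}^-_{-i}\mathsf{M}(\nu)\bigr)^t = \mathsf{M}(\tilde{f}^-_i\nu)^t.
\]

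The third step is to identify $\tilde{a}^-_\mu\lambda'' = e\mu + \lambda''$ (part-wise sum of partitions) for $\lambda''\in\Part^e$. Proposition~\ref{Prop:cryst_part} handles $\lambda''=\varnothing$, giving $\tilde{a}^-_\mu\varnothing = e\mu$; then the commutation of the Heisenberg and $\hat{\slf}_e$-crystals implies that any path $C_e$ in $B(\Lambda_0)$ from $\varnothing$ to $\lambda''$ sends $e\mu$ to $\tilde{a}^-_\mu\lambda''$, so it suffices to check $C_e(e\mu) = e\mu + \lambda''$ at the level of partitions. This reduces to a signature analysis: for partitions of the form $e\mu + \nu$ with $\nu\in\Part^e$, the addable and removable $i$-cells sitting inside the ``$e\mu$-floor'' form cancelling $-+$ pairs in every $i$-signature, so the reduced signature coincides with that of $\nu$ alone (this is the familiar compatibility of the $\hat{\slf}_e$-crystal with the $e$-core/$e$-quotient decomposition).

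Assembling the pieces using Proposition~\ref{Prop:WC_cryst_commut}(3) then gives
\[
\wc^A_{+\leftarrow -}(\lambda) = \wc^A_{+\leftarrow -}(\tilde{a}^-_{\lambda'}\lambda'') = \tilde{a}^+_{\lambda'^t}\wc^A_{+\leftarrow -}(\lambda'') = \tilde{a}^+_{\lambda'^t}\mathsf{M}(\lambda'')^t = \bigl(\tilde{a}^-_{\lambda'^t}\mathsf{M}(\lambda'')\bigr)^t = \bigl(e\lambda'^t + \mathsf{M}(\lambda'')\bigr)^t,
\]
where the last equality uses $\mathsf{M}(\lambda'')\in\Part^e$ (a standard property of Mullineux) and applies the third step. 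The main obstacle I expect is the signature analysis underlying the third step: although the statement is essentially folklore, writing it out cleanly requires tracking carefully which addable or removable $i$-cells of $e\mu+\nu$ lie in the $e\mu$-floor versus the $\nu$-layer and verifying that the floor cells always cancel against each other in every $i$-signature.
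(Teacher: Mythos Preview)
Your proof is correct and follows precisely the route the paper intends; the paper's own proof is literally the single sentence ``follows from (1) and (3) of Proposition~\ref{Prop:WC_cryst_commut}'', and you have supplied the details behind that sentence. In particular, your use of the sign-twist $\psi_\chi$ to obtain $\tilde{f}^+_i\eta=(\tilde{f}^-_{-i}\eta^t)^t$ and $\tilde{a}^+_\mu\eta=(\tilde{a}^-_\mu\eta^t)^t$, the identification of $\wc^A_{+\leftarrow -}$ on $\Part^e$ with $\mathsf{M}(\cdot)^t$ via (1), and the reduction to the Heisenberg operator on the $-$ side via (3) are exactly what is needed.

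One small remark on your Step~3: the statement $\tilde{a}^-_\mu\lambda''=e\mu+\lambda''$ for $\lambda''\in\Part^e$ is, as you note, equivalent to the compatibility of the $\hat{\slf}_e$-crystal with the decomposition $\lambda=e\lambda'+\lambda''$. Rather than redoing the signature analysis, you can simply cite \cite{Wilcox} (as the paper already does in Section~\ref{SS_known_results}) for the $\ell=1$ case, or observe that it is the content of Remark~\ref{Rem:Prop_generaliz} specialised to $\ell=1$ together with the commutation of the two crystals. Either way, the obstacle you flag is already handled by results quoted in the paper and does not require new work.
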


\subsection{Computation of wall-crossing bijections}\label{SS_wc_bij_comput}
Here we will use Propositions \ref{Prop:wc_bij_comp} and \ref{Prop:WC_cryst_commut}
to compute the wall-crossing bijections through the walls described in
(2) of Lemma \ref{Lem:essent_walls}. More precisely, using Proposition \ref{Prop:wc_bij_comp}
we reduce the computation to the case when $c$ is Weil generic in
$\param'$. Then we use (1) of Proposition \ref{Prop:WC_cryst_commut} to do the computation.

For $m\in \Z$, we define the self-bijection $\wc_m$ of $\Part_2$. For a box $b=(x,y)$
in the first diagram we set $\cont(b):=x-y$, for $b'=(x,y)$ in the second diagram
we set $\cont(b')=x-y+m$. We can
produce two crystal structures on $\Part_2$ using the cancellation recipe for addable/removable
boxes similar to the one used in \ref{SSS_crystal} (both our crystals will be special
cases of the crystals considered there). The crystal operators  $\tilde{e}^{[j]}_i$ (resp., $\tilde{f}^{[j]}_i$)
with $i\in \Z, j=1,2,$ for  both crystals will remove (resp., add) boxes with shifted content
$i$. What is different for the two structures is the order in which the boxes are listed.
For the crystal with $j=1$, we first list the addable/removable $i$-box from
the first diagram and then the box from the second diagram. For the crystal with
$j=2$, we do vice versa. Cancellation of addable/removable boxes (we cancel $-+$)
and the definition of the crystal operators is the same as in \ref{SSS_crystal}.

\begin{Lem}\label{Lem:cryst_comput_expl}
There is a unique bijection $\wc_m:\Part_2\rightarrow \Part_2$ that preserves
the total number of boxes and intertwines the two sets of crystal operators:
$\wc_m(\tilde{e}^{[1]}_i\lambda)=\tilde{e}_i^{[2]}\wc_m(\lambda)$ and $\wc_m(\tilde{f}^{[1]}_i\lambda)=
\tilde{f}^{[2]}_i\wc_m(\lambda)$ for all $i\in\Z$.
\end{Lem}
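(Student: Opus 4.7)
The bijection $\wc_m$ in the lemma is the combinatorial wall-crossing bijection attached, via the Cherednik-algebra machinery of Section \ref{SS_WC}, to an essential wall of type (2) of Lemma \ref{Lem:essent_walls} for $W = G(2,1,n)$, taken uniformly across all $n$. Existence is therefore supplied by Proposition \ref{Prop:WC_cryst_commut}(2): that proposition yields a self-bijection of $\Part_2$ preserving the total box count and intertwining the $\slf_\infty$-crystal operators attached to the Fock space structures on the two sides of the wall. Using the explicit description of the Kac-Moody crystal recalled in \ref{SSS_crystal}, together with the observation that crossing the wall $h_1 - h_2 = \kappa m$ swaps the two components of the multi-charge $(s_1, s_2)$ in the Uglov recipe, I would check that the two $\slf_\infty$-crystal structures thus produced coincide with the $[1]$- and $[2]$-crystals of the statement; the different sign conventions for $\cont$ in the two diagrams are responsible for the precise form $y-x$ versus $x-y+m$.

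For uniqueness I would induct on $n = |\lambda^{(1)}| + |\lambda^{(2)}|$, the base case $n = 0$ being trivial. For $\lambda \in \Part_2(n)$ that is \emph{not} $[1]$-singular, one can write $\lambda = \tilde f_i^{[1]}\mu$ for some $i$ and some $\mu \in \Part_2(n-1)$; the intertwining relation then forces $\wc_m(\lambda) = \tilde f_i^{[2]} \wc_m(\mu)$, which is determined by the inductive hypothesis. Hence uniqueness of $\wc_m$ reduces to uniqueness of the weight-preserving bijection between $[1]$-singular and $[2]$-singular multipartitions.

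The $[1]$-singular (respectively $[2]$-singular) multipartitions index the highest-weight vectors of the irreducible components of $\mathcal{F}_{s_1} \otimes \mathcal{F}_{s_2}$ (respectively $\mathcal{F}_{s_2} \otimes \mathcal{F}_{s_1}$) under Kashiwara's tensor product rule for crystals. A tensor product of two level $1$ Fock spaces for $\slf_\infty$ decomposes multiplicity-freely into irreducible highest-weight $\slf_\infty$-modules $V(\Lambda_a + \Lambda_b)$, as follows from a Pieri-type character comparison (or equivalently from the boson-fermion correspondence). Consequently each $\slf_\infty$-weight carries at most one singular multipartition on each side, and the weight-preserving bijection between the two singular loci is forced. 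The hard part of the argument is precisely this multiplicity-freeness of the level-$2$ $\slf_\infty$-Fock space; it is standard but requires some bookkeeping with characters, which I would perform by computing the generating function of $\Part_2(n)$ graded by $(n_i)_{i\in\Z}$ (the number of boxes of each shifted content) and identifying the result with a sum of characters of $V(\Lambda_a + \Lambda_b)$, each appearing once.
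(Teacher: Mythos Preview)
Your overall strategy works, but it takes a far heavier route than the paper's and contains a genuine misidentification that you should fix.

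For existence you invoke Proposition~\ref{Prop:WC_cryst_commut}(2), but that part concerns the \emph{Heisenberg} $\slf_\infty$-crystal of Section~\ref{SS_Heis_cryst}, which is only defined when $e<\infty$. The crystals $\tilde{e}^{[j]}_i,\tilde{f}^{[j]}_i$ in the lemma are the Kac-Moody crystals of \ref{SSS_crystal} in the limit $e=\infty$; the correct input is part~(1). Relatedly, crossing the wall $h_1-h_2=\kappa m$ does not ``swap the multi-charge'': it reverses the relative position of the two diagrams in the box ordering used in the signature rule, which is precisely what distinguishes the $[1]$- from the $[2]$-crystal. With those corrections your existence argument is valid, but it drags the entire wall-crossing apparatus of Sections~\ref{SS_WC}--\ref{SS_wc_vs_cryst} into what is a purely combinatorial statement.

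The paper instead handles existence and uniqueness in one elementary stroke. A bipartition is $[1]$-singular iff every $i$-signature is one of $\varnothing,+,++,-+$; unwinding this forces $\lambda^{(2)}=\varnothing$ and $\lambda^{(1)}$ to be a rectangle whose unique removable box has content $m$. The $[2]$-singular bipartitions admit the symmetric description. Hence for each $n$ there is \emph{at most one} singular bipartition on either side, and the sets of $n$ for which one exists manifestly agree. This pins down $\wc_m$ on singulars and then everywhere. Your multiplicity-freeness argument for uniqueness is correct and recovers exactly this ``at most one singular per $n$'' statement, but via a character computation rather than a one-line inspection. The payoff of the paper's elementary proof is logical cleanliness: in Proposition~\ref{Prop:short_wc_comput} the paper runs essentially your existence argument (wall-crossing commutes with the Kac-Moody crystal) and then invokes the \emph{uniqueness} half of the present lemma to pin the abstract wall-crossing bijection to the explicit $\wc_m$. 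If you prove existence here via wall-crossing, that later identification becomes circular in spirit, even if not formally.
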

\begin{proof}
Let us describe the bipartitions $\lambda$ annihilated by all $\tilde{e}^{[1]}_i, i\in \Z$.
For all $i$, the signatures must look like $\varnothing, +,++$ or $-+$. So $\lambda^{(2)}=\varnothing$
and $\lambda^{(1)}$ can have only one removable box, that box must have content $m$.
Hence $\lambda^{(1)}$ is a rectangle with opposite vertices being the box $(1,1)$ and a box with content $m$.
The similar description works for the operators $\tilde{e}_i^{[2]}$. In that case,
$\lambda^{(1)}=\varnothing$ and $\lambda^{(2)}$ is a rectangle with vertex on
the diagonal with non-shifted content $-m$. In particular, we see that for
each $n$, there is not  more than one singular 2-partition of $n$ for either of the crystals.
Since $\wc_m$ has to map singular bi-partitions to singular ones, we see that the
requirement that $\wc_m$ preserves the number of boxes determines $\wc_m$ on the singular
bi-partitions uniquely. Since $\wc_m$ intertwines the crystal operators, its extension
to all 2-partitions is unique as well.
\end{proof}

Now we are ready to describe the wall-crossing bijection through the wall
 $h_i-h_j=\kappa m$.

\begin{Prop}\label{Prop:short_wc_comput}
Let $\Pi$ be the wall defined by $h_i-h_j-\kappa m=0$, let $c$ be a parameter
in a chamber adjacent to $\Pi$ and let $c-\psi$ lie in the chamber opposite
to that of $c$ with respect to $\Pi$.  Suppose that the linear function $h_i-h_j-\kappa m$
is positive on $\psi$.
Then  $\lambda':=\wc_{c-\psi\leftarrow c}(\lambda)$ is computed as follows:
$\lambda'^{(k)}=\lambda^{(k)}$ if $k\neq i,j$ and $(\lambda'^{(j)},\lambda'^{(i)})=\wc_m(\lambda^{(j)},\lambda^{(i)})$.
\end{Prop}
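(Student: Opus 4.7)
The plan is to reduce to a Weil generic parameter on $\Pi$, where the situation decouples into a two-factor problem, and then identify the resulting bijection on the bipartition factor using the uniqueness statement in Lemma~\ref{Lem:cryst_comput_expl}.

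First, I would invoke Proposition~\ref{Prop:wc_bij_comp} to replace $c$ by a Weil generic element of $\param':=c+(\Pi\cap\underline{\param}_{\Z})$. Since $\Pi$ is a single linear relation that does not pin down $\kappa$, such a generic $c$ has $\kappa$ irrational, so $e=\infty$ and $\hat{\slf}_e=\slf_\infty$; moreover only the pair $\{i,j\}$ is involved nontrivially on this wall, and all other $s_k-s_{k'}$ are Weil generic and so escape every integrality condition. Lemma~\ref{Lem:cat_equi_decomp} then yields a $\g_c$-equivariant decomposition of $\OCat_c$ as $\OCat_{\kappa,(s_i,s_j)}\boxtimes\bigl(\boxtimes_{k\neq i,j}\OCat_{\kappa,s_k}\bigr)$, and the analogous decomposition for $\OCat_{c-\psi}$ (the equivalence relation $\sim_{\cdot}$ is locally constant along $\underline{\param}_{\Z}$).

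Next I would check that $\wc_{c-\psi\leftarrow c}$ respects this decomposition and acts as the identity on the single-index factors. The blocks of $\OCat_c$, read off from Lemma~\ref{Lem:blocks}, refine the decomposition, and by Proposition~\ref{Prop:WC_K0} the wall-crossing is the identity on $K_0$, hence preserves blocks; so it restricts to a bijection on each tensor factor. The shift $\psi$ affects only the $s_i,s_j$-coordinates, so on each factor $\OCat_{\kappa,s_k}$ with $k\neq i,j$ no essential wall is crossed, and Lemma~\ref{Lem:non_essent_cross} forces the induced wall-crossing there to be a highest weight equivalence, and in particular the identity on labels. This gives $\lambda'^{(k)}=\lambda^{(k)}$ for all $k\neq i,j$.

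It remains to compute the induced bijection on $\OCat_{\kappa,(s_i,s_j)}$, whose simples are indexed by $\Part_2$. By Proposition~\ref{Prop:WC_cryst_commut}(1) and the strong $\g_c$-equivariance in Lemma~\ref{Lem:cat_equi_decomp}, this bijection intertwines the $\g_c$-crystal on the $c$-side with the $\g_{c-\psi}$-crystal on the $c-\psi$-side; since $\kappa$ is irrational, both are $\slf_\infty$-crystals of the type considered in \ref{SSS_crystal}. The signature cancellation procedure orders the addable and removable $z$-boxes from the two diagrams according to the relative size of $s_i$ and $s_j$, and this ordering is reversed on the two sides of $\Pi$. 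Matching the Cherednik content formula $\cont(b)=x-y+s$ (with the normalization $s_i-s_j=m$) against the convention of Lemma~\ref{Lem:cryst_comput_expl} — content $y-x$ on the first diagram and $x-y+m$ on the second — identifies the two crystal structures on the two sides of $\Pi$ as precisely the crystals $(\tilde{e}^{[1]},\tilde{f}^{[1]})$ and $(\tilde{e}^{[2]},\tilde{f}^{[2]})$ of that Lemma, under the identification of the bipartition with $(\lambda^{(j)},\lambda^{(i)})$. Since $\wc_{c-\psi\leftarrow c}$ also preserves the total number of boxes, the uniqueness statement of Lemma~\ref{Lem:cryst_comput_expl} forces it to equal $\wc_m$.

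The main obstacle is the convention-matching in the last paragraph: one must track carefully how the Cherednik content formula and the ordering of diagrams in the signature recipe correspond to the $y-x$ versus $x-y+m$ conventions and the two orderings in Lemma~\ref{Lem:cryst_comput_expl}, and identify which side of $\Pi$ produces which ordering. The other technical step — that the wall-crossing bijection genuinely factors through the decomposition of Lemma~\ref{Lem:cat_equi_decomp} and is trivial on the single-index factors — is a straightforward consequence of the block preservation of $\wc_{c-\psi\leftarrow c}$ combined with Lemma~\ref{Lem:non_essent_cross}.
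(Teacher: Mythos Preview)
Your proposal is correct and follows essentially the same route as the paper: reduce to a Weil generic parameter on $\param'$ via Proposition~\ref{Prop:wc_bij_comp}, observe that the $\g_c$-crystal decouples into independent $\slf_\infty$-crystals (one for each $k\neq i,j$ and one for the pair $\{i,j\}$), and then pin down the bijection using Proposition~\ref{Prop:WC_cryst_commut}(1) together with the uniqueness statement in Lemma~\ref{Lem:cryst_comput_expl}. The paper's own argument is marginally more economical in that it works directly with the crystal on the set of labels rather than passing through the categorical decomposition of Lemma~\ref{Lem:cat_equi_decomp} and Lemma~\ref{Lem:non_essent_cross}; the triviality of $\wc_{c-\psi\leftarrow c}$ on the single-index factors already follows from the intertwining property, since the $\slf_\infty$-crystal on each such factor is identical on both sides of the wall and has $\varnothing$ as its unique singular element.
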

\begin{proof}
Thanks to Proposition \ref{Prop:wc_bij_comp}, we reduce the proof to the case when
$c$ is Weil generic on $c+\Pi$. In this case, if $b,b'$ are equivalent boxes,
we have $b$ in the $i$th diagram and $b'$ in the $j$th diagram, or vice versa
or $b,b'$ lie in the same diagram and have the same content.
We can add the same number to the $s_1,\ldots,s_\ell$ and assume that $s_j=0,
s_i=m$, while the other $\ell-2$ numbers are generic. We will have $\ell-2$
collections of $\slf_\infty$-crystal operators, each collection acts
on its own partition. We will have another $\slf_\infty$-crystal acting
on partitions $i,j$ in one of two ways described above. The claim of this
proposition follows now from Proposition \ref{Prop:WC_cryst_commut} combined with the  uniqueness part of Lemma \ref{Lem:cryst_comput_expl}.
\end{proof}

\begin{Rem}
Here we explain how to compute the wall-crossing bijection through the wall $\kappa=0$.
The case of $\ell=1$ is done in Corollary \ref{Cor:type_A_wc}. The general case
reduces to that one as follows. In the computation of the bijection we can assume
that $s_1,\ldots,s_\ell$ are Weil generic. In this case the direct sum of  induction functors
from $S_{n_1}\times\ldots \times S_{n_k}$ to $G(\ell,1,n_1+\ldots+n_k)$ is a labelling preserving
equivalence $\OCat_{\kappa}^{\boxtimes m}\xrightarrow{\sim} \OCat_c$.
So the wall-crossing bijection $\mathcal{P}_\ell\rightarrow \mathcal{P}_\ell$
is the product of $\ell$ copies of the wall-crossing bijections
$\mathcal{P}_1\rightarrow \mathcal{P}_1$.
\end{Rem}

\subsection{Summary}\label{SS_comput_summ}
Let us summarize the computation of $p_c(\lambda)$ and $q_c(\lambda)$.

First, the number $p_c(\lambda)$ is the depth of $\lambda$ in the
$\g_c$-crystal, see \ref{SSS_crystal}. If $\kappa$ is irrational,
then $q_c(\lambda)=0$, and we are done.

So suppose from now on that $\kappa$ is rational, let $e$ be the denominator.
We may assume that $\kappa<0$, otherwise we switch $(\kappa,s_1,\ldots,s_\ell)$
to $(-\kappa,-s_1,\ldots,-s_\ell)$ and $\lambda$ to $\lambda^t$.

We may assume all numbers $\kappa e s_1,\ldots, \kappa e s_\ell$ are integral,
we can reduce to this case using Corollary \ref{Cor:decomp_supp}. Also we can assume that
$p_c(\lambda)=0$, we can reduce to this case by replacing $\lambda$
with the singular element in the connected component containing $\lambda$ in the $\hat{\slf}_e$-crystal.

Now let $|\lambda|=n$. If there is $j$ such that $s_j<s_i-n$ for
all $i\neq j$, then $q_c(\lambda)=|\lambda^{(j)}|/e$. In general,
we can reduce to the case when there is such $j$ by crossing
walls of the form $h_a-h_{b}=\kappa m$. Each time we cross the wall
we pick a parameter $(\kappa,s_1,\ldots,s_\ell)$ in the neighboring
chamber and modify $\lambda$ by applying the wall-crossing bijection
from Proposition \ref{Prop:short_wc_comput}.

\subsection{Chambers, combinatorially}
Let us fix a parameter $c=(\kappa,s_1,\ldots,s_\ell)$ with $\kappa=-\frac{r}{e}$,
where $r>0,e>1$, $r,e$ are coprime, and $rs_1,\ldots,rs_\ell\in \Z$. Recall that
the collection $(s_1,\ldots,s_\ell)$ is defined up to adding the same scalar to all
$s_i$'s. With our conventions, this scalar must lie in $\frac{1}{r}\Z$.
We are going to describe the walls of
a ``combinatorial" chamber containing $c$. The description is going to be independent
of $n$, so, when we fix $n$, some essential chambers are going to be unions of several combinatorial
chambers.

Let us start by defining combinatorial
chambers. These will be specified by linear orders of boxes with the  same residue.
Recall that we record a multipartition as a collection of shapes consisting of
unit square boxes in $\ell$ coordinate planes.

A combinatorial chamber will be parameterized by an $\ell$-tuple of pairs of integers.
Pick a residue $\alpha$ mod $e$. Since $r$ and $e$ are coprime, it makes
sense to speak about $\alpha$-boxes. The choice of our parameter $c$ gives a linear
ordering on $\alpha$-boxes in all possible partitions, see \ref{SSS_blocks_order}. Pick an interval of length $\ell$ (it contains exactly one box
from each of the $\ell$ coordinate planes) and record (in a decreasing way)
the pairs $(m_j,i_j)$, $j=1,\ldots,k,$ where $m_j$ is the (unshifted)
content of the $j$th box and $i_j$ is the number of the plane where this
box is located.

Depending on our choice of $\alpha$ and of the first  box in the interval,
we will get different sequences that will be called equivalent. The equivalence
relation is generated by
\begin{align*}&((m_1,i_1),\ldots, (m_\ell,i_\ell))\sim ((m_1+m,i_1),\ldots, (m_\ell+m,i_\ell)),\\
&((m_1,i_1),\ldots, (m_\ell,i_\ell))\sim ((m_2,i_2),\ldots, (m_\ell,i_\ell),(m_1-e,i_1)),
\end{align*}
where $m$ is an arbitrary integer.
We declare parameters $(\kappa,s_1,\ldots,s_\ell), (\kappa',s_1',\ldots,s_\ell')$
with integral difference (i.e., $\kappa'-\kappa\in \Z, r(s_i-s_j)-r(s_i'-s_j')\in e\Z$) to lie in the same combinatorial chamber if the $\ell$-sequences
of pairs produced from these parameters are equivalent. Note that for parameters
lying in the same combinatorial chamber, the orders on the categories $\mathcal{O}$
are the same for all $n$.

Note that the affine Weyl group $\hat{S}_\ell$ acts on the set of chambers  via
\begin{align*}&\sigma_j.((m_1,i_1),\ldots, (m_\ell,i_\ell))=((m_1,i_1),\ldots, (m_{j+1},i_{j+1}),(m_j,i_j),\ldots, (m_\ell,i_\ell)), 1<j<\ell,\\
&\sigma_\ell.((m_1,i_1),\ldots, (m_\ell,i_\ell))=((m_\ell+e,i_\ell),(m_2,i_2),\ldots, (m_1-e,i_1)).\end{align*}
Here $\sigma_1,\ldots,\sigma_\ell$ denote the simple reflections in $\hat{S}_\ell$.

We say that two combinatorial chambers are {\it adjacent} if the corresponding
$\ell$-tuples of pairs are obtained from one another either by applying some $\sigma_j$.
In other words, neighboring chambers correspond to a minimal
perturbation of the order on boxes. Therefore, for each fixed combinatorial chamber
there is $n_0\in \Z_{>0}$ such that this chamber is an essential chamber for
all $n>n_0$.

Let us explain how to determine walls between adjacent combinatorial chambers. This will allow us to determine
which wall-crossing bijection we need to apply to get between these two chambers. Suppose that
we have permuted pairs $(m_j,i_j),(m_{j+1},i_{j+1})$. Then the corresponding wall
is $h_{i_j}-h_{i_{j+1}}=\kappa(m_j-m_{j+1})$. So to get from the initial chamber
to its neighbor we will need to apply the bijection $\mathfrak{wc}_m$ with
$m=m_{j+1}-m_j$ to the partitions $i_j,i_{j+1}$.

Let us finish this section with an example. Let $\kappa=-\frac{1}{3}, s_1=0,s_2=1,s_3=2$.
Recall that different choices of $\alpha$ give equivalent sequences.
Take $\alpha$ to be $2$ modulo $3$.
Then we get the following triple of pairs: $((0,3),(1,2),(2,1))$. The other two choices of $\alpha$
give equivalent triples. 
The three neighbor
combinatorial chambers correspond to the following triples:
\begin{itemize}
\item[(i)] $((1,2),(0,3),(2,1))$.
\item[(ii)]  $((0,3),(2,1),(1,2))$.
\item[(iii)] $((5,1),(1,2),(-3,3))$.
\end{itemize}
For a parameter in chamber (i), we can take, for example, $\kappa'=-\frac{4}{3}, s_1'=0, s_2=1, s_3=\frac{1}{2}$.
To get to this chamber, we need to apply $\mathfrak{wc}_1$ to the diagrams number $3$ and $2$.



\subsection{Example of computation}\label{SS_comput_example}
Here we will compute the numbers $p_c(\lambda),q_c(\lambda)$
for $\ell=2, |\lambda|\leqslant 3, \kappa<0, e=2$, even $s_1$ and $s_2$.

\subsubsection{Chambers}
The  essential walls are $h_1-h_2=m\kappa$ for $m=-2,0,2$ and $\kappa=0$.
So we have four chambers with $\kappa<0$. Here are the pairs of integers
corresponding to these combinatorial chambers (we only take chambers
adjacent to these walls).

\begin{enumerate}
\item $(0,1),(2,2)$,
\item $(0,1),(0,2)$ (equivalent to $(2,2),(0,1)$),
\item $(0,1), (-2,2)$ (equivalent to $(0,2),(0,1)$).
\item $(0,1),(-4,2)$ (equivalent to $(-2,2),(0,1)$).
\end{enumerate}

The chambers (1) and (4) are asymptotic.

\subsubsection{Wall-crossing bijections}
Let us start with the bijection between (1) and (2) that is $\mathfrak{wc}_{-2}$. It is the identity on $\mathcal{P}_2(1)$ and $\mathcal{P}_2(2)$.

The bijection sends the singular bipartition $(1^3,\varnothing)$ in chamber (1) to the singular
bipartition $(\varnothing, 3)$ in chamber (2). Further, it sends $(1^2,1)$ to $(1^3,\varnothing)$,
$(\varnothing,3)$ to $(1,2)$, and $(1,2)$ to $(1^2,1)$. Finally, the bijection $\mathfrak{wc}_{-2}$
fixes all other bi-partitions.
%
The bijection $\mathfrak{wc}_0$ from chamber (2) to chamber (3) just swaps the components
of the partition.

The bijection $\mathfrak{wc}_2$ from chamber (3) to chamber (4) sends $(3,\varnothing)$
to $(\varnothing,1^3), (\varnothing,1^3)$ to $(1,1^2)$, $(2,1)$ to $(3,\varnothing)$
and $(1,1^2)$ to $(2,1)$.

%
%
\subsubsection{Supports}
{\it Chamber (1)}. The $\hat{\slf}_2$-crystal looks as follows:

$\tilde{f}_0(\varnothing, \varnothing)=(\varnothing,1), \tilde{f}_1(\varnothing,\varnothing)=0$.

$\tilde{f}_0(\varnothing,1)=(1,1), \tilde{f}_1(\varnothing,1)=(\varnothing,1^2)$.

$\tilde{f}_0(1,\varnothing)=0, \tilde{f}_1(1,\varnothing)=(1^2,\varnothing)$.

$\tilde{f}_0(\varnothing, 1^2)=(\varnothing,1^3), \tilde{f}_1(\varnothing, 1^2)=(\varnothing,(21))$.

$\tilde{f}_0(\varnothing, 2)=(\varnothing, 3), \tilde{f}_1(\varnothing,2)=(\varnothing,21)$.

$\tilde{f}_0(1,1)=0, \tilde{f}_1(1,1)=(1,1^2)$.

$\tilde{f}_0(1^2,\varnothing)=(1^2,1), \tilde{f}_1(1^2,\varnothing)=(21,\varnothing)$.

$\tilde{f}_0(2,\varnothing)=(2,1), \tilde{f}_1(2,\varnothing)=0$.

The following bipartitions have $p(\lambda)=|\lambda|$: $(\varnothing, \varnothing),
(\varnothing,1), (\varnothing,1^2),(1,1) (\varnothing,1^3), (\varnothing, (21))$,
and $(1,1^2)$.

The following bipartitions have $p(\lambda)=|\lambda|-1$: $(1,\varnothing),(1^2,\varnothing),
(1^2,1),(21,\varnothing)$.

The following bipartitions have $p(\lambda)=|\lambda|-2$: $(\varnothing,2),(2,\varnothing), (\varnothing,3),
 (2,1)$.

The following bipartitions of 3 have $p(\lambda)=0$: $(3,\varnothing),(1^3,\varnothing),(1,2)$.

The following bipartitions have $q(\lambda)=1$: $(\varnothing,2),(1,2),(\varnothing,3)$.
All other bipartitions have $q(\lambda)=0$.

In particular, the above computation implies that the bi-partitions $(3,\varnothing),(1^3,\varnothing)$
corresponding to finite dimensional simples lie in different pieces of the filtration
that makes the wall-crossing functor from chamber (1) to chamber (2) perverse.

{\it Chamber (2)}. The pairs $(p(\lambda),q(\lambda))$ are the same as in chamber (1)
except the following four cases.
\begin{itemize}
\item $p(1^3,\varnothing)=2, q(1^3,\varnothing)=0$,
\item $p(\varnothing,3)=q(\varnothing,3)=0$,
\item $p(1,2)=q(1,2)=1$,
\item $p(1^2,1)=0, q(1^2,1)=1$.
\end{itemize}

{\it Chamber (3)}. Obtained from chamber (2) by swapping the components
of a bipartition.

{\it Chamber (4)}. Obtained from chamber (1) by swapping the components
of a bipartition.
\section{Appendix}\label{S_App}
\subsection{Case $\kappa=0$}\label{S_App_kappa0}
Here we will explain how to compute the supports of
the irreducible modules in $\OCat_c(n)$ in the case when $\kappa=0$.
In this case $H_c(n)=H_c(1)^{\otimes n}\#\Sym_n$ and so
an object in the category $\mathcal{O}_c(n)$
is the same things as an $\Sym_n$-equivariant object in $\mathcal{O}_c(1)^{\otimes n}$.
Recall that in this case $p_c(\lambda)=0$, by convention.
The number $q_c(\lambda)$ is computed as follows.
The simple modules in $\OCat_c(1)$ are labelled by
the numbers from $1$ to $\ell$. Let $I$ denote the
subset of $\{1,\ldots,\ell\}$ consisting of the indexes
$i$ such that the corresponding module has dimension
of support equal to $1$. Then $q_c(\lambda)=\sum_{i\in I}|\lambda^{(i)}|$.

\subsection{Groups $G(\ell,r,n)$}\label{S_gen_groups}
Let $\ell,n$ be the same as before and let $r$ divide $\ell$.
Then we can consider the normal subgroup $G(\ell,r,n)$ consisting of all
elements of the form $\sigma\eta$, where $\sigma\in \Sym_n$
is an arbitrary element and $\eta=(\eta_{(1)},\ldots,\eta_{(n)})\in (\Z/\ell\Z)^n$
satisfies $\prod_{i=1}^n \eta_{(i)}^r=1$. This is a complex reflection
group (in its action on $\h=\C^n$). In particular, for $r=\ell=2$
we get the Weyl group of type $D_n$.

The following lemma is elementary.

\begin{Lem}\label{Lem:conj_classes}
Suppose that $n>2$. Then  every conjugacy class in
$G(\ell,1,n)$ contained in $G(\ell,r,n)$ is a single conjugacy class
in the latter.
So we have $\ell/r$ conjugacy classes of reflections in
$G(\ell,r,n)$.
\end{Lem}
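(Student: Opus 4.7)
The plan is to reduce the assertion to a centralizer-surjectivity check, and then verify that check for every $G(\ell,1,n)$-conjugacy class of reflections that is contained in $G(\ell,r,n)$.

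The starting point is the orbit-stabilizer observation: if $s\in G(\ell,r,n)$, then the $G(\ell,1,n)$-conjugacy class of $s$ breaks into exactly $[G(\ell,1,n):G(\ell,r,n)\cdot C_{G(\ell,1,n)}(s)]$ $G(\ell,r,n)$-conjugacy classes. Under the product-of-entries homomorphism $(\eta,\sigma)\mapsto\prod_k\eta_k$, the quotient $G(\ell,1,n)/G(\ell,r,n)$ is identified with a finite cyclic quotient of $\mu_\ell$. So the non-splitting claim reduces to showing, for each reflection $s\in G(\ell,r,n)$, that $C_{G(\ell,1,n)}(s)$ surjects onto this quotient.

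I would next enumerate the $G(\ell,1,n)$-classes of reflections: the $\ell-1$ diagonal classes represented by $d^{(i)}_\zeta=(\zeta\text{ in slot }i)$, $\zeta\in\mu_\ell\setminus\{1\}$, and the single transposition-type class represented by $s_{i,j;\zeta}=((ij),(\zeta^{-1}\text{ in slot }i,\zeta\text{ in slot }j))$. Diagonal-product computations then select those classes which actually lie in $G(\ell,r,n)$: the transposition-type class always does, since its diagonal product equals $1$, and a diagonal class contributes precisely when $\zeta$ satisfies the defining product relation of $G(\ell,r,n)$.

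For the surjectivity I would argue case by case. The centralizer of $d^{(i)}_\zeta$ contains the full torus $(\mu_\ell)^n$ of diagonal matrices, so the image under $\prod_k\eta_k$ is already all of $\mu_\ell$. For a transposition-type reflection $s_{i,j;\zeta}$ a short direct computation shows that any diagonal matrix supported at a single index $k\notin\{i,j\}$ commutes with $s_{i,j;\zeta}$; since $n>2$, at least one such index $k$ exists, and letting its entry range over $\mu_\ell$ we realize every element of $\mu_\ell$ as the product of entries of a centralizing element. The main obstacle, and the single point where the assumption $n>2$ enters essentially, is precisely this transposition-type case: for $n=2$ no spare index exists and the class can genuinely split, as already illustrated by $D_2=G(2,2,2)$, which is abelian and has two non-conjugate reflections.

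Once these two surjectivity checks are in place, no reflection class splits upon passage to $G(\ell,r,n)$, and adding the single surviving transposition class to the surviving diagonal classes yields the stated count.
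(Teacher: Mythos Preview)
The paper does not prove this lemma; it is simply declared ``elementary''. Your centralizer-surjectivity argument for reflection classes is correct and is exactly what the application requires.

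Two remarks. First, you verify the non-splitting only for reflection classes, whereas the lemma as stated asserts it for \emph{every} conjugacy class. This is not a gap on your part but an overstatement in the lemma: the general claim is false. For instance, the $G(6,1,3)$-class of the $3$-cycle $g=(1,(123))$ lies in $G(6,6,3)$, and $C_{G(6,1,3)}(g)=\{((\zeta,\zeta,\zeta),(123)^k):\zeta\in\mu_6,\ k=0,1,2\}$ has image $\{\zeta^3:\zeta\in\mu_6\}=\mu_2\subsetneq\mu_6$ under the product-of-entries map, so this class splits into three pieces in $G(6,6,3)$. Restricting to reflections is therefore necessary, not merely convenient.

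Second, your closing line defers to ``the stated count'' $1+\ell/r$, but that number appears to be off by one. The surviving diagonal classes are indexed by the nontrivial $\zeta$ in a subgroup of $\mu_\ell$ of order $\ell/r$, giving $\ell/r-1$ of them; together with the single transposition-type class this makes $\ell/r$. The case $G(2,2,n)=W(D_n)$, which has a single conjugacy class of reflections for $n\geqslant 3$, confirms this. You should record the actual count rather than appeal to the lemma's figure.
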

Note that the claim of the lemma is false when $n=2$: the class
of a transposition from $\Sym_n$ in $G(\ell,1,2)$ is contained
in $G(\ell,r,2)$ and splits into the union of several conjugacy classes
there.

Let $c\in \param$ be such that the values of $c$ on the conjugacy classes
not intersecting $G(\ell,r,n)$ are zero. Define a parameter $\underline{c}$ for $G(\ell,r,n)$
as the restriction of $c$ to $G(\ell,r,n)\cap S$. Let $H_c$ and $H_{\underline{c}}$ be the Cherednik algebras
for $G(\ell,1,n)$ and $G(\ell,r,n)$, respectively. Note that the group
$G(\ell,1,n)$ acts on $H_{\underline{c}}$ by automorphisms. Then we have
$H_c=H_{\underline{c}}\#_{G(\ell,r,n)}G(\ell,1,n)$. It follows that
$\OCat_c(n)$ is the category of $G(\ell,1,n)$-equivariant objects
in $\OCat_{\underline{c}}(G(\ell,r,n))$. Recall that by a $G(\ell,1,n)$-equivariant
object in  $\OCat_{\underline{c}}(G(\ell,r,n))$ we mean $M\in
\OCat_{\underline{c}}(G(\ell,r,n))$ that is also a $G(\ell,1,n)$-module with the following two compatibility conditions
\begin{itemize}
\item the actions of $G(\ell,r,n)\subset G(\ell,1,n),H_{\underline{c}}$
agree,
\item and $M$ is a $G(\ell,1,n)$-equivariant $H_{\underline{c}}$-module.
\end{itemize}
This  reduces
questions about characters/supports from
$\OCat_{\underline{c}}(G(\ell,r,n))$ to $\OCat_c(n)$.

\subsection{Three commuting crystals}\label{S_commut_cryst}
Assume that $\kappa=-\frac{1}{e}, s_1,\ldots,s_\ell\in \Z$.
We write ${\bf s}$ for the $\ell$-tuple $(s_1,\ldots,s_\ell)$ and
$|{\bf s}|$ for $s_1+\ldots+s_\ell$. We assume, for simplicity,
that $|{\bf s}|=0$. We write $\OCat_{\bf s}$ for $\OCat_{c}$.

We have established two commuting crystals on $\Part_\ell$. They are crystal
analogs of the two commuting actions on the Fock space $\mathcal{F}_{\bf s}$:
of $\hat{\slf}_e$ and of the Heisenberg algebra. Recall that one way to realize
the Fock space is via the level-rank duality. Namely, $\sum_{{\bf s}, |{\bf s}|=0}\mathcal{F}_{\bf s}$
is a module over $\hat{\slf}_e\times \mathfrak{heis}\times \hat{\slf}_\ell$,
and $\mathcal{F}_{\bf s}$ is a weight space for $\hat{\slf}_\ell$, see
\cite[Section 2.1]{Uglov} for the quantum version of this construction. The name
``level-rank duality'' is explained by the fact that the representation
in $\sum_{{\bf s}, |{\bf s}|=s}\mathcal{F}_{\bf s}$ has level $\ell$
for the algebra $\hat{\slf}_e$ and level $e$ for the algebra $\hat{\slf}_\ell$.
Also we have $\sum_{{\bf s}, |{\bf s}|=0}\mathcal{F}_{\bf s}=\sum_{{\bf s'},|{\bf s'}|=0 }
\mathcal{F}'_{\bf s'}$, where ${\bf s'}=(s_1',\ldots,s'_e)$ and $\mathcal{F}'_{\bf s'}$
denotes the level $e$ Fock space with multi-charge ${\bf s'}$ for $\hat{\slf}_\ell$.

We get two commuting
crystals for $\hat{\slf}_e$ and $\hat{\slf}_\ell$ (usually realized via
abaci). It should be possible to check that the Heisenberg crystal commutes
with the $\hat{\slf}_\ell$-crystal combinatorially\footnote{After the first version of this paper
appeared this has been done in
\cite{Gerber}.}. What we would like
to do, however, is to explain the categorical meaning of these three
crystals that should easily imply the commutativity.

It is known that the level-rank duality is categorified by the Koszul duality,
\cite{RSVV,Webster_new}. Namely, the category  $\bigoplus_{{\bf s}, |{\bf s}|=0}\OCat_{\bf s}$
is standard Koszul, and its Koszul dual category is $\bigoplus_{{\bf s'}, |{\bf s'}|=0}\OCat'_{\bf s'}$,
where $\OCat'_{\bf s'}$ stands for the category $\mathcal{O}$ for the groups $G(e,1,?)$
and the parameters $\kappa':=-\frac{1}{\ell}, {\bf s'}$. Below we are going to sketch
a new approach to the Koszul duality that nicely incorporates all three categorical actions
(two Kac-Moody actions and one Heisenberg action).

Our approach is based on  the work of Bezrukavnikov and Yun, \cite{BY}.
They consider geometric versions of (singlular, parabolic) affine
categories $\mathcal{O}$ for Kac-Moody Lie algebras, in particular,
for $\hat{\slf}_m$. For two compositions ${\bf s},{\bf s'}$ of $m$
with $\ell$ and $e$ parts, respectively, we have the parabolic-singular
category $\mathcal{O}^{aff}_{{\bf s},{\bf s}'}$, where ${\bf s}$ encodes
the ``parabolicity'' and ${\bf s}'$ encodes the singularity. We have commuting functors
$E_i, i=1,\ldots,e,$ and $E_j', j=1,\ldots,\ell$, between the categories
$\mathcal{O}^{aff}$ that change ${\bf s}'$ and ${\bf s}$, respectively,
and their biadjoints $F_i, F_j'$.
Further, we have endo-functors of   $\mathcal{O}^{aff}_{{\bf s},{\bf s}'}$,
Gaitsgory's central functors, that commute with the $E_i$'s, $F_i$'s and $E'_j$'s,
$F'_j$'s.

The Koszul (or more precisely, Ringel-Koszul) duality is between the
categories $\mathcal{O}^{aff}_{{\bf s},{\bf s}'}$ and
$\mathcal{O}^{aff}_{{\bf s}',{\bf s}}$. It switches the functors
$E_i$'s and $E'_j$'s and preserves Gaitsgory's central functors.

Now we need to relate the categories $\mathcal{O}^{aff}_{{\bf s},{\bf s'}}$
with the categories $\mathcal{O}_{{\bf s}}(n)$. This should be done as in
\cite{VV_proof,RSVV}. Namely, one can pick $m$ large enough and consider
a ``polynomial truncation'' of  $\mathcal{O}^{aff}_{{\bf s},{\bf s'}}$.
The Ringel-Koszul duality restricts to the polynomial truncations.
The functors
$E_i,F_i$ on $\mathcal{O}^{aff}_{{\bf s},{\bf s'}}$
will become the Kac-Moody  categorification functors,
while Gaitsgory's central functors will give rise to the categorical
Heisenberg action.

\end{document}